\documentclass[12pt]{article}

\usepackage[cp1251]{inputenc}
\usepackage[english]{babel}
\usepackage[T1, T2A]{fontenc}

\usepackage{amssymb}
\usepackage{amsmath}
\usepackage{amsthm} 
\usepackage{mathtext}
\usepackage{amsfonts}

\setlength{\topmargin}{-.5in}
\setlength{\textheight}{9in}
\setlength{\oddsidemargin}{.125in}
\setlength{\textwidth}{6.25in}

\renewcommand{\leq}{\leqslant}
\renewcommand{\geq}{\geqslant}

\newcommand{\const}{\operatorname{const}}

\newcommand{\lda}{\lambda}

\newcommand{\Cspec}{\mathfrak{C}}

\frenchspacing

\newtheorem{rustheorem}{Theorem}
\newtheorem{ruslemma}{Lemma}
\newtheorem{rusproposition}{Proposition}

\theoremstyle{definition}

\newtheorem{rusremark}{Remark}
\newtheorem{rusexample}{Example}

\newenvironment{rusbibliography}{\vspace{-0.5cm}}

\begin{document} 
\title{On spectral asymptotics of the tensor product of operators with almost regular marginal asymptotics}
\author{N.~V.~Rastegaev \\ \small{Chebyshev Laboratory, St. Petersburg State 
University,} \\
\small{14th Line 29B, Vasilyevsky Island, 199178, St. Petersburg, Russia} 
\\ \small{rastmusician@gmail.com}}
\renewcommand{\today}{}
\maketitle

\abstract{
Spectral asymptotics of a tensor product of compact operators in Hilbert space
 with known marginal asymptotics is studied. Methods of A. Karol', A. Nazarov
 and Ya. Nikitin (Trans. AMS, 2008) are generalized for operators with almost
 regular marginal asymptotics. In many (but not all) cases it is shown, that
 tensor product has almost regular asymptotics as well. Obtained results are
 then applied to the theory of small ball probabilities of Gaussian random fields.
 }

\section{Introduction}
We consider compact nonnegative self-adjoint operators $\mathcal{T} = \mathcal{T}^*\geqslant 0$ in a Hilbert space $\mathcal H$ and $\widetilde{\mathcal{T}}$ in a Hilbert space $\widetilde{\mathcal H}$.
We denote by $\lda_n = \lda_n(\mathcal{T})$ 
the eigenvalues of the operator $\mathcal{T}$ arranged in a nondecreasing order and repeated according to their multiplicity. We also consider their counting function
$$
\mathcal{N}(t) = \mathcal{N}(t, \mathcal{T}) = \#\{n: \lda_n(\mathcal{T}) > t\}.
$$
Similarly we define $\widetilde{\lda}_n$ and $\widetilde{\mathcal{N}}(t)$ for $\widetilde{\mathcal{T}}$.

Having known asymptotics for $\mathcal{N}(t, \mathcal{T})$ and $\mathcal{N}(t, \widetilde{\mathcal{T}})$ as $t\to 0$, we aim to determine the asymptotics for $\mathcal{N}(t, \mathcal{T}\otimes\widetilde{\mathcal{T}})$. Obtained results are easily generalized to the case of a tensor product of multiple operators.

Known applications of such results could be found in problems concerning asymptotics of random values and vectors quantization (see e.g. \cite{GrLuPa,LuPa}), average complexity of linear problems, i.e. problems of approximation of a continuous linear operator (see e.g. \cite{PaWa}), and also in the developing theory of small deviations of random processes in $L_2$-norm (see e.g. \cite{NazNikKar,NazKar}).

Abstract methods of spectral asymptotics analysis for tensor products, generalized in this paper, were developed in \cite{NazNikKar} and \cite{NazKar}.
In \cite{NazNikKar} the case is considered, in which the eigenvalues of the operators-multipliers have the so-called \textit{regular} asymptotic behavior:
$$
\lda_n \sim \dfrac{\psi(n)}{n^{p}}, \quad n\to\infty,
$$
where $p > 1$, and $\psi$ is a \textit{slowly varying} function (SVF).
In the paper \cite{NazKar} similar approach is used for the case, when the eigenvalue counting function has the asymptotics of a slowly varying function.

In this paper we consider compact operators with almost regular asymptotics
\begin{equation}\label{asymplda}
\lda_n(\mathcal{T}) \sim \dfrac{\psi(n)\cdot \mathfrak{s}(\ln(n))}{n^p}, \quad n\to+\infty,
\end{equation}
where $p > 1$, $\psi$ is a SVF, and $\mathfrak{s}$ is a continious periodic function. As an example of such an operator one might consider a Green integral operator with a singular arithmetically self-similar weight measure (see \cite{KL,SV,Naz}).

For the asymptotics \eqref{asymplda} the following fact holds, which is similar to Lemma~3.1 from \cite{NazNikKar}, so we will provide it without proof.

\begin{rusproposition}\label{proposEIGEN}
For any $p>0$ spectral asymptotics \eqref{asymplda} for the operator $\mathcal{T}$ is equivalent to the asymptotics
\begin{equation}\label{Tasymp}
\mathcal{N}(t, \mathcal{T}) \sim \mathcal{N}_{as}(t) := 
\dfrac{\varphi(1/t)\cdot s(\ln (1/t))}{t^{1/p}}, \qquad t\to+0,
\end{equation}
where $\varphi$ is a SVF, $s$ is a periodic function {\rm(}period $T$ of the function $s$ corresponds to the period $T/p$ of the function $\mathfrak{s}${\rm)}.
Moreover, the convergence of the integral $\int_1^\infty \varphi(\tau)\frac{d\tau}{\tau}$ is equivalent to the convergence of the sum $\sum_n\lda_n^{1/p}(\mathcal{T})$.
\end{rusproposition}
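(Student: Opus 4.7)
My plan is to mirror the two-way Tauberian/Karamata argument underlying Lemma~3.1 of \cite{NazNikKar}, treating the periodic factor $\mathfrak{s}$ as a bounded multiplicative perturbation that survives inversion up to a rescaling of the logarithmic argument by $1/p$. The starting point is the elementary squeeze $\lda_{\mathcal{N}(t)}>t\geqslant\lda_{\mathcal{N}(t)+1}$, which together with $\lda_n/\lda_{n+1}\to 1$ (inherited from \eqref{asymplda}) yields $\lda_{\mathcal{N}(t)}\sim t$. The symmetric relation $\mathcal{N}(\lda_n)\sim n$ then makes the two asymptotics interchangeable.

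\textbf{Forward direction.} Set $N=\mathcal{N}(t,\mathcal{T})$. Substituting $\lda_N\sim t$ into \eqref{asymplda} yields
\[
 N^p\sim\frac{\psi(N)\,\mathfrak{s}(\ln N)}{t},
 \qquad\text{hence}\qquad
 N\sim[\psi(N)\,\mathfrak{s}(\ln N)]^{1/p}\,t^{-1/p}.
\]
In particular $\ln N = p^{-1}\ln(1/t) + o(\ln(1/t))$. Define $\varphi(y):=[\psi(N(1/y))]^{1/p}$, which is slowly varying as a composition of a SVF with a function asymptotic to $y^{1/p}$, and set $s(\ln(1/t)) := [\mathfrak{s}(\ln N(t))]^{1/p}$. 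To see that $s$ has period $T$, note that a shift $t\mapsto e^{-T}t$ multiplies $N$ by $e^{T/p}(1+o(1))$; since $\psi$ is a SVF this is one full period of $\mathfrak{s}$ modulo $o(1)$, and uniform continuity of $\mathfrak{s}$ gives $s(v+T)=s(v)$ in the limit.

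\textbf{Reverse direction and convergence.} Assuming \eqref{Tasymp}, the symmetric squeeze $\mathcal{N}(\lda_n-0)\geqslant n\geqslant\mathcal{N}(\lda_n+0)$ combined with (essential) continuity of $\mathcal{N}_{as}$ identifies $\lda_n$ with the solution $t$ of $\mathcal{N}_{as}(t)=n$; inverting produces \eqref{asymplda}. For the convergence statement, compare
\[
 \sum_n\lda_n^{1/p}\asymp\int_1^\infty\psi(\tau)^{1/p}\mathfrak{s}(\ln\tau)^{1/p}\,\frac{d\tau}{\tau} = \int_0^\infty \psi(e^u)^{1/p}\mathfrak{s}(u)^{1/p}\,du.
\]
Since $\mathfrak{s}^{1/p}$ is bounded, continuous, and periodic with (necessarily positive) mean, an averaging argument over a single period shows this integral converges iff $\int\psi(e^u)^{1/p}\,du$ converges; the change of variables $u=p^{-1}\ln\sigma$ then translates the latter into $\int_1^\infty\varphi(\sigma)\,d\sigma/\sigma$.

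\textbf{Main obstacle.} The technical heart is justifying that $\mathfrak{s}(\ln N(t))$ is asymptotically periodic in $\ln(1/t)$ despite the potentially unbounded slowly varying correction in $\ln N - p^{-1}\ln(1/t)$. The mechanism is the ``local constancy on log scale'' characterising SVFs: shifting $t$ by a finite factor shifts $N$ by a corresponding finite factor, and $\psi$ is insensitive to such shifts up to $1+o(1)$, so the $\mathfrak{s}$-phase stabilises modulo one period. Making this precise is the step with no direct analogue in \cite{NazNikKar}, and I would carry it out using the representation theorem for slowly varying functions together with uniform continuity of $\mathfrak{s}$ on a period.
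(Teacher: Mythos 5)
There is a genuine gap in the forward direction, precisely at the step you flag as the ``technical heart''. You assert that ``a shift $t\mapsto e^{-T}t$ multiplies $N$ by $e^{T/p}(1+o(1))$'' and then deduce periodicity of $s$; but this assertion is exactly what needs to be proved, and your proposed mechanism does not deliver it. Writing $N'=\mathcal{N}(e^{-T}t)$, $N=\mathcal{N}(t)$, and $\alpha=\ln N'-\ln N$, relation \eqref{asymplda} together with $\lda_N\sim t$, $\lda_{N'}\sim e^{-T}t$ and the uniform SVF property of $\psi$ gives only
$$
e^{p\alpha}\,\dfrac{\mathfrak{s}(\ln N)}{\mathfrak{s}(\ln N+\alpha)}\;=\;e^{T}\,(1+o(1)).
$$
From this implicit relation $\alpha\to T/p$ does \emph{not} follow for an arbitrary continuous periodic $\mathfrak{s}$: the map $\alpha\mapsto p\alpha-\ln\mathfrak{s}(u+\alpha)+\ln\mathfrak{s}(u)$ need not be injective, so the equation may have spurious solutions. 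The missing ingredient is a structural constraint on $\mathfrak{s}$ forced by the monotonicity of the sequence $(\lda_n)$: passing to a subsequence along which $\ln n$ equidistributes modulo the period, the inequality $\lda_m\le\lda_n$ for $m\ge n$ yields $\mathfrak{s}(\tau+h)\le e^{ph}\mathfrak{s}(\tau)$ for all $h>0$, i.e. $\mathfrak{s}(\tau)=e^{p\tau}\widetilde\varrho(\tau)$ with $\widetilde\varrho$ non-increasing and $\widetilde\varrho(\tau+T/p)=e^{-T}\widetilde\varrho(\tau)$. This is the exact analogue, for $\mathfrak{s}$, of what the paper establishes for $s$ in Lemma~\ref{ogrVar}, and it is what collapses the displayed identity to $\widetilde\varrho(\ln N+\alpha)=\widetilde\varrho(\ln N+T/p)(1+o(1))$, from which $\alpha\to T/p$ can be extracted. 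You neither derive nor invoke this representation, and your appeal to ``the representation theorem for slowly varying functions together with uniform continuity of $\mathfrak{s}$'' does not substitute for it, since the obstruction is in the non-SVF factor $\mathfrak{s}$, not in $\psi$.

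There is a secondary gap: even granting $N(e^{-T}t)\sim e^{T/p}N(t)$, your construction only gives $s(v+T)/s(v)\to1$ as $v\to\infty$, i.e.\ asymptotic periodicity of the function $v\mapsto[\mathfrak{s}(\ln N)]^{1/p}$, not a genuinely $T$-periodic $s$ as the statement requires. The paper's own technique for extracting an honest periodic limit from a merely asymptotically periodic quantity is the scaling limit $s(\ln(1/t))=\lim_{k\to\infty}e^{-kT/p}\,\mathcal{N}(e^{-kT}t)/\varphi(e^{kT}/t)$ used in the proof of Lemma~\ref{ogrVar}; some device of this kind is needed here too, together with Proposition~\ref{propSVF1}, part~2, to control the monotonicity along the sequence. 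The reverse direction is left almost entirely to the reader, and should be handled symmetrically with the same care. The convergence equivalence at the end is correct as written.
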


\noindent Application of the obtained results is demonstrated for an example on the problem of $L_2$-small deviations of Gaussian random fields.

The study of the small deviation problem was initiated in \cite{Sytaya} and continued by many other scholars. The
history of the problem and the summary of main results are the subjects of two 
reviews \cite{Lifsh} and \cite{LiShao}. Links to recent results in the field of small deviations of random processes could be found on the web-site \cite{Site}.

The study of small deviations of Gaussian fields of a tensor product type was initiated in the classical paper \cite{Csaki}, where the logarithmic asymptotics of $L_2$-small deviations was obtained for the Brownian sheet
$$
\mathbb{W}_d(x_1, \dots, x_d) = W_1(x_1) \otimes W_2(x_2) \otimes \dots \otimes
W_d(x_d)
$$
on the unit cube (here $W_k$ are independent Wiener processes).
This result was later generalized in \cite{Li} to some other marginal processes. In \cite{NazNikKar} and \cite{NazKar} the results on small deviations of wide classes of Gaussian fields of tensor product type were obtained as a corollary of the results on spectral asymptotics of the corresponding operators.

This paper has the following structure. We give the necessary information about slowly varying functions in \S 2. In \S 3 we establish some auxiliary facts related to the asymptotics of the convolutions of an almost Mellin type. 

Spectral asymptotics of the tensor products of operators with marginal asymptotics of the form \eqref{Tasymp} are the subject of \S 4. The main results are that we obtain the main term of the spectral asymptotics of the tensor product for all possible combinations of the parameters, imposing only slight technical restrictions in some cases. The results are separated into several cases depending on the relations between the parameters of the spectral asymptotics of the operators $\mathcal{T}$ and $\widetilde{\mathcal{T}}$:
\begin{enumerate}
\item $\widetilde p > p$.
\item $\widetilde p = p$.
\begin{enumerate}
\item $\int\limits_1^\infty \varphi(\sigma) \dfrac{d\sigma}{\sigma} = \int\limits_1^\infty \widetilde\varphi(\sigma) \dfrac{d\sigma}{\sigma} = \infty$.
\begin{enumerate}
\item Functions $s$ and $\widetilde s$ have a common period ($T=\widetilde T$).
\item Periods $T$ and $\widetilde T$ of the functions $s$ and $\widetilde s$ are incommensurable.
\end{enumerate}
\item $\int\limits_1^\infty \varphi(\sigma) \dfrac{d\sigma}{\sigma} < \infty,\quad \int\limits_1^\infty \widetilde\varphi(\sigma) \dfrac{d\sigma}{\sigma} = \infty$.
\item $\int\limits_1^\infty \varphi(\sigma) \dfrac{d\sigma}{\sigma} <\infty,\quad \int\limits_1^\infty \widetilde\varphi(\sigma) \dfrac{d\sigma}{\sigma} < \infty$.
\end{enumerate}
\end{enumerate}
In the cases 1, 2.1.1 the asymptotics of the tensor product is almost regular, in the case 2.1.2 it is regular. In the cases 2.2 and 2.3 we obtain an asymptotics of a more complex form.

In \S 5 we connect the almost regular spectral asymptotics with the logarithmic asymptotics of small deviations of Gaussian random fields.

Different constants with values, that are not essential for this work, are denoted~$C$. Dependence of this constants on parameters is noted in parentheses.

\section{Auxiliary facts about slowly varying functions}
We recall, that a positive function $\varphi(\tau)$, $\tau > 0$, is called {\it slowly varying} (at infinity), if for any constant $c>0$
\begin{equation}\label{SVFrel}
\varphi(c\tau)/\varphi(\tau)\to 1, \quad \text{ as } \tau\to +\infty.
\end{equation} 
The following simple properties of SVFs are known (see e.g. \cite{Seneta} for proofs).
\begin{rusproposition}\label{propSVF1}
Let $\varphi$ be an SVF. Then the following properties hold:
\begin{enumerate}
\item The convergence in \eqref{SVFrel} is uniform for $c\!\in\![a,b]$ for any ${0\!<\!a\!<\!b\!<\!+\infty}$.
\item Function $\tau\mapsto\tau^p\varphi(\tau)$, $p\neq 0$, is monotonous for large values of $\tau$.
\item There exists an equivalent SVF $\psi\in C^2(\mathbb{R})$ {\rm(}i.e.  $\varphi(\tau)/\psi(\tau)\to 1$ as $\tau\to\infty${\rm)}, such that
$$
\tau\cdot (\ln(\psi))'(\tau)\to 0, \quad \tau^2\cdot (\ln(\psi))''(\tau)\to 0, 
\qquad \tau\to\infty.
$$
\item If $\int_1^\infty \varphi(\tau)\frac{d\tau}{\tau}<\infty$, then $\varphi(\tau)\to 0$
as $\tau\to\infty$.
\end{enumerate}
\end{rusproposition}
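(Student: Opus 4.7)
The plan is to derive all four items from the standard toolkit of the theory of slowly varying functions, anchored by Karamata's representation theorem: on some $[a,\infty)$,
\[
\varphi(\tau) = c(\tau)\exp\Bigl(\int_a^\tau \frac{\varepsilon(u)}{u}\,du\Bigr),
\]
with $c(\tau)\to c_0\in(0,\infty)$ and $\varepsilon(u)\to 0$. Since the paper defers to \cite{Seneta} for proofs, I would either invoke the representation directly or sketch its derivation from \eqref{SVFrel} together with item (1), which has to be proved first and independently (via a Steinhaus-type measurability argument applied to $h(t):=\log\varphi(e^t)$).

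For (1), I would argue by contradiction: if uniformity on $[a,b]$ failed, there would be $\delta>0$, $\tau_n\to\infty$ and $c_n\to c_*\in[a,b]$ with $|\varphi(c_n\tau_n)/\varphi(\tau_n)-1|\ge\delta$; rewriting the ratio as
\[
\frac{\varphi(c_n\tau_n)}{\varphi(c_*\tau_n)}\cdot\frac{\varphi(c_*\tau_n)}{\varphi(\tau_n)},
\]
the second factor tends to $1$ by \eqref{SVFrel}, and after the measurability/uniformity bridge so does the first, since $c_n/c_*\to 1$. For (2), substituting the representation into $\log(\tau^p\varphi(\tau))$ yields logarithmic derivative $(p+\varepsilon(\tau))/\tau+(\log c)'(\tau)$, of sign $p$ for large $\tau$; for non-smooth $\varphi$ one compares two large values, pulling out $\tau^p$ and applying (1).

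For (3), I would set $\psi(\tau):=\int_0^1\varphi(\tau e^s)\,ds$, or more symmetrically a double convolution in the logarithmic variable against a $C^\infty$ bump; part (1) then gives $\psi/\varphi\to 1$. Differentiating under the integral and reapplying (1), the quantities $\tau(\log\psi)'(\tau)$ and $\tau^2(\log\psi)''(\tau)$ appear as averages of expressions vanishing uniformly. For (4), assume the contrary: then $\varphi(\tau_k)\ge\epsilon$ for some $\epsilon>0$ along $\tau_k\uparrow\infty$, and (1) upgrades this to $\varphi(\tau)\ge\epsilon/2$ on $[\tau_k,2\tau_k]$ for large $k$, so each such interval contributes at least $(\epsilon/2)\log 2$ to $\int_1^\infty\varphi(\tau)\,d\tau/\tau$; choosing $\tau_{k+1}\ge 2\tau_k$ forces divergence, contradicting the hypothesis.

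The genuine obstacle is the initial Steinhaus-type step underlying (1), which converts the pointwise-in-$c$ relation \eqref{SVFrel} into a locally uniform one; once (1) and the representation are in hand, the remaining items are routine manipulations. Given that these facts are entirely classical and the paper itself merely cites \cite{Seneta}, in practice I would do the same rather than reprove them in full.
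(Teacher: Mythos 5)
The paper provides no proof for this proposition: it explicitly labels the four items as ``known'' and cites Seneta, so there is nothing to compare your argument against on an item-by-item basis, and your final decision to defer to the literature mirrors what the paper itself does. Your sketches of (1), (3), and (4) are the standard arguments and are essentially correct; you are also right that the only genuinely nontrivial ingredient is the Steinhaus/measurability step behind the Uniform Convergence Theorem --- and worth noting that the paper's bare definition of SVF omits the measurability hypothesis, without which (1) can actually fail for Hamel-basis pathologies, a gap everyone silently ignores.

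The one place where your sketch does not quite close is item (2). As literally stated for an arbitrary SVF, the claim is false: take $\varphi$ piecewise constant, equal to $1$ on $[e^{2k},e^{2k+1})$ and to $1+1/(2k+1)$ on $[e^{2k+1},e^{2k+2})$; then $\varphi$ is slowly varying (it tends to $1$) but $\tau^{p}\varphi(\tau)$ has a strictly negative jump at every $\tau=e^{2k+2}$, so it is not eventually monotone for any $p>0$. Your representation-based argument tacitly differentiates $c(\tau)$, which Karamata's theorem does not license ($c$ is merely measurable with a finite positive limit), and the fallback ``compare two large values, pull out $\tau^{p}$, apply (1)'' does not establish monotonicity either, because the ratio $c(\tau_2)/c(\tau_1)$ can oscillate above and below $1$ even when both arguments are large. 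The correct formulation of (2) --- and, given Remark~2, clearly the one the paper intends --- is that the property holds for a suitable equivalent SVF, most naturally the $C^{2}$ mollified version $\psi$ from item~(3): from $\tau(\ln\psi)'(\tau)\to 0$ one gets $\bigl(\ln(\tau^{p}\psi(\tau))\bigr)'=(p+o(1))/\tau$, which has the sign of $p$ for large $\tau$. So items (2) and (3) should be read together, with (3) proved first.
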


\noindent Following \cite{NazNikKar}, we define the {\it Mellin convolution} of two SVFs $\varphi$ and $\psi$\,:
$$
(\varphi\ast\psi)(\tau) = \int\limits_1^\tau\varphi(\sigma)\psi(\tau/\sigma)\dfrac{d\sigma}{\sigma}
= h_{\varphi,\psi}(\tau)+h_{\psi, \varphi}(\tau),
$$
where
$$
h_{\varphi, \psi}(\tau) = \int\limits_1^{\sqrt{\tau}}\varphi(\sigma)\psi(\tau/\sigma)\dfrac{d\sigma}{\sigma}.
$$
\begin{rusproposition}[{\cite[Theorem 2.2]{NazNikKar}}]\label{propSVF2} The following properties hold:
\begin{enumerate}
\item If $\int_1^\infty \varphi(\tau)\frac{d\tau}{\tau}=\infty$, then 
$\psi(\tau) = o(h_{\varphi, \psi}(\tau))$ as $\tau\to\infty$.
\item If $\psi(\tau) = \psi_1(\tau)(1+o(1))$ as $\tau\to\infty$, then 
$$
h_{\varphi, \psi}(\tau) = h_{\varphi, \psi_1}(\tau)(1+o(1)), \quad\tau\to\infty.
$$
If also $\int_1^\infty\psi(\tau)\frac{d\tau}{\tau} = \infty$, then
$$
h_{\psi, \varphi}(\tau) = h_{\psi_1, \varphi}(\tau)(1+o(1)), \quad\tau\to\infty.
$$
\item $h_{\varphi, \psi}$ is a SVF.
\item Let $\int_1^\infty \varphi(\tau)\frac{d\tau}{\tau}<\infty$, and also
$$
	\int\limits_1^\infty \varphi(\sigma)m_\psi(\sigma)\frac{d\sigma}{\sigma}<\infty,
$$
where 
$$
m_\psi(\sigma) = \sup_{\tau>\sigma^2}\dfrac{\psi(\tau/\sigma)}{\psi(\tau)}.
$$
Then
\begin{equation}\label{SVFestim1}
h_{\varphi, \psi}(\tau) = \psi(\tau)\int\limits_1^\infty\varphi(\sigma)\dfrac{d\sigma}{\sigma}
\cdot (1+o(1)), \qquad \tau\to\infty.
\end{equation}
\end{enumerate}
\end{rusproposition}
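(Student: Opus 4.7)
The plan is to handle the four claims in order; the unifying observation is that for $\sigma \in [1, \sqrt{\tau}]$ we have $\tau/\sigma \geq \sqrt{\tau} \to \infty$, so the slowly varying property of $\psi$ at the argument $\tau/\sigma$ applies with uniformly small error. For part~(1), I split $h_{\varphi,\psi}(\tau) = \int_1^A + \int_A^{\sqrt{\tau}}$ with a free parameter $A$. On $[1,A]$, uniform convergence (Proposition~\ref{propSVF1}(1)) gives $\psi(\tau/\sigma) = \psi(\tau)(1+o(1))$, so the first integral is $\sim \psi(\tau)\int_1^A\varphi(\sigma)\frac{d\sigma}{\sigma}$. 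Since $\int_1^\infty\varphi/\sigma$ diverges, choosing $A$ large makes this ratio exceed any prescribed $M$, yielding $h_{\varphi,\psi}(\tau)/\psi(\tau) \to \infty$.

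For part~(2), the equivalence $\psi(u) \sim \psi_1(u)$ is uniform in $u \geq \sqrt{\tau}$ for large $\tau$, so substituting $\psi_1$ for $\psi$ inside $\psi(\tau/\sigma)$ across $\sigma \in [1, \sqrt{\tau}]$ produces only a $(1+o(1))$ factor, giving $h_{\varphi,\psi}(\tau) \sim h_{\varphi,\psi_1}(\tau)$. For $h_{\psi,\varphi}$ with $\psi$ in the outer slot, the integration range covers small $\sigma$ where the asymptotic equivalence may fail, so I split at a fixed $\sigma_0$ beyond which $\psi \sim \psi_1$ holds uniformly: the integral over $[\sigma_0, \sqrt{\tau}]$ transfers as before, while the piece $\int_1^{\sigma_0}\psi(\sigma)\varphi(\tau/\sigma)\frac{d\sigma}{\sigma}$ is bounded by $C(\sigma_0)\varphi(\tau)$. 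The extra hypothesis $\int_1^\infty \psi/\sigma = \infty$, together with part~(1) applied with $\varphi$ and $\psi$ interchanged, gives $\varphi(\tau) = o(h_{\psi,\varphi}(\tau))$, which absorbs this error.

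For part~(3) I need $h_{\varphi,\psi}(c\tau)/h_{\varphi,\psi}(\tau) \to 1$. Writing $h_{\varphi,\psi}(c\tau) = \int_1^{\sqrt{c\tau}}\varphi(\sigma)\psi(c\tau/\sigma)\frac{d\sigma}{\sigma}$, the replacement $\psi(c\tau/\sigma) \mapsto \psi(\tau/\sigma)$ on $[1,\sqrt{\tau}]$ is again uniformly $(1+o(1))$, leaving a boundary piece over $[\sqrt{\tau}, \sqrt{c\tau}]$ of magnitude $O(\log c \cdot \varphi(\sqrt{\tau})\psi(\sqrt{\tau}))$. The main technical obstacle is showing this boundary term is $o(h_{\varphi,\psi}(\tau))$. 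Using $\psi(\sqrt{\tau}) \sim \psi(\tau)$ together with the lower bound $h_{\varphi,\psi}(\tau) \geq c_0\psi(\tau)\int_1^2\varphi(\sigma)\frac{d\sigma}{\sigma}$ (from restricting to $\sigma \in [1,2]$), the claim reduces to $\varphi(\sqrt{\tau}) = o(h_{\varphi,\psi}(\tau)/\psi(\tau))$: this is immediate when $\int\varphi/\sigma < \infty$ since then $\varphi(\sqrt{\tau}) \to 0$ by Proposition~\ref{propSVF1}(4), and in the divergent case part~(1) yields the sharper bound $h_{\varphi,\psi}(\tau)/\psi(\tau) \sim \int_1^{\sqrt{\tau}}\varphi(\sigma)\frac{d\sigma}{\sigma}$, which dominates $\varphi(\sqrt{\tau})$ by a standard Karamata-type estimate.

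For part~(4) the argument is dominated convergence. Dividing by $\psi(\tau)$, I have $h_{\varphi,\psi}(\tau)/\psi(\tau) = \int_1^{\sqrt{\tau}}\varphi(\sigma)\frac{\psi(\tau/\sigma)}{\psi(\tau)}\frac{d\sigma}{\sigma}$; on this range $\tau > \sigma^2$, so the ratio is bounded by $m_\psi(\sigma)$, and the integrand (extended by zero for $\sigma > \sqrt{\tau}$) is dominated by the integrable function $\varphi(\sigma)m_\psi(\sigma)$, while pointwise $\psi(\tau/\sigma)/\psi(\tau) \to 1$ by the SVF property. Lebesgue's theorem then delivers the limit $\int_1^\infty\varphi(\sigma)\frac{d\sigma}{\sigma}$.
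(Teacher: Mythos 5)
This proposition is cited from \cite{NazNikKar} without proof, so there is no internal proof in the paper to compare against; I evaluate your argument on its own merits.

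Parts (1), (2), and (4) are correct: the split at a free level $A$ in (1), the uniform-on-$u\ge\sqrt\tau$ transfer for the inner slot and the $\varphi(\tau)=o(h_{\psi,\varphi}(\tau))$ absorption (via part~(1) with the roles swapped) for the outer slot in (2), and the dominated convergence argument in (4) are all sound.

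Part (3) has a genuine gap, and it is concentrated in the sentence ``Using $\psi(\sqrt\tau)\sim\psi(\tau)$ together with the lower bound $h_{\varphi,\psi}(\tau)\ge c_0\psi(\tau)\int_1^2\varphi(\sigma)\frac{d\sigma}{\sigma}$\dots''. The relation $\psi(\sqrt\tau)\sim\psi(\tau)$ is \emph{false} for a general SVF: take $\psi(\tau)=\exp(\pm\sqrt{\ln\tau})$, which is slowly varying, and yet $\psi(\sqrt\tau)/\psi(\tau)=\exp(\pm(1/\sqrt2-1)\sqrt{\ln\tau})$ tends to $0$ or $\infty$. Slow variation controls $\psi(c\tau)/\psi(\tau)$ for \emph{bounded} $c$, not the ratio across a square root. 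Consequently the boundary term is genuinely $O(\log c\cdot\varphi(\sqrt\tau)\,\psi(\sqrt\tau))$ with $\psi(\sqrt\tau)$ in place of $\psi(\tau)$, and your lower bound $h_{\varphi,\psi}(\tau)\gtrsim\psi(\tau)$ is in the wrong currency: when $\psi$ is decreasing-type (e.g.\ $\psi(\tau)=\exp(-\sqrt{\ln\tau})$ with $\varphi\equiv1$) one has $\psi(\sqrt\tau)/\psi(\tau)\to\infty$ and the boundary term is \emph{not} $o(\psi(\tau))$, so the reduction to ``$\varphi(\sqrt\tau)=o(h_{\varphi,\psi}(\tau)/\psi(\tau))$'' is insufficient. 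The subsequent assertion that part~(1) gives the sharper equivalence $h_{\varphi,\psi}(\tau)/\psi(\tau)\sim\int_1^{\sqrt\tau}\varphi(\sigma)\frac{d\sigma}{\sigma}$ in the divergent case is also not what part~(1) delivers; one only gets the ratio tends to infinity, and the putative equivalence fails in the same example.

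The fix is to compare the boundary term against the correct portion of $h_{\varphi,\psi}(\tau)$. For any fixed $A>1$ one has, by restricting to $\sigma\in[\sqrt\tau/A,\sqrt\tau]$ and applying the SVF property uniformly on the bounded ratio range,
$$
h_{\varphi,\psi}(\tau)\ \ge\ \int_{\sqrt\tau/A}^{\sqrt\tau}\varphi(\sigma)\psi(\tau/\sigma)\frac{d\sigma}{\sigma}\ \sim\ (\log A)\,\varphi(\sqrt\tau)\,\psi(\sqrt\tau),\qquad\tau\to\infty,
$$
whereas the boundary piece over $[\sqrt\tau,\sqrt{c\tau}]$ is $\sim\tfrac12(\log c)\,\varphi(\sqrt\tau)\,\psi(\sqrt\tau)$. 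Hence for each fixed $A$,
$$
\limsup_{\tau\to\infty}\frac{\int_{\sqrt\tau}^{\sqrt{c\tau}}\varphi(\sigma)\psi(c\tau/\sigma)\tfrac{d\sigma}{\sigma}}{h_{\varphi,\psi}(\tau)}\ \le\ \frac{\log c}{2\log A},
$$
and letting $A\to\infty$ gives that the boundary piece is $o(h_{\varphi,\psi}(\tau))$, which combined with your (correct) uniform replacement $\psi(c\tau/\sigma)=\psi(\tau/\sigma)(1+o(1))$ on $[1,\sqrt\tau]$ completes the proof that $h_{\varphi,\psi}$ is a SVF. Note that this repaired argument makes no reference to $\psi(\sqrt\tau)\sim\psi(\tau)$ and works uniformly in the convergent and divergent cases, so the case split you attempted is unnecessary.
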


\section{Preliminary facts about the asymptotics of almost Mellin convolutions}

In this section $\varphi$ and $\widetilde\varphi$ are SVFs, $s$ and $\widetilde s$ are continious, bounded, separated from zero functions with periods $T$ and $\widetilde T$ correspondingly that have the form
$$
s(\tau) = e^{-\tau/p}\varrho(\tau), \qquad \widetilde s(\tau) = e^{-\tau/p}\widetilde\varrho(\tau),
$$
where $p>0$, $\varrho$ and $\widetilde\varrho$ are monotonous. It means, in particular, that $s$ are $\widetilde s$ are bounded variation functions.

We define {\it almost Mellin convolution}
\begin{align*}
(\varphi s \ast \widetilde \varphi \widetilde s)(\tau)  & = 
\int\limits_{1}^{\tau}
\varphi\left(\frac{\tau}{\sigma}\right)\widetilde\varphi(\sigma) s\left(\ln\frac{\tau}{\sigma}\right)
\widetilde s(\ln\sigma)\dfrac{d(\widetilde{\varrho}(\ln \sigma))}{\widetilde{\varrho}(\ln \sigma)}  {}
\\
& {} = H[\varphi s, \widetilde \varphi \widetilde s](\tau) + H_1[\varphi s, \widetilde \varphi \widetilde s](\tau), \\
H[\varphi s, \widetilde \varphi \widetilde s](\tau) & = \int\limits_{1}^{\sqrt{\tau}}
\varphi\left(\frac{\tau}{\sigma}\right)\widetilde\varphi(\sigma) s\left(\ln\frac{\tau}{\sigma}\right)
\widetilde s(\ln\sigma)\dfrac{d(\widetilde{\varrho}(\ln \sigma))}{\widetilde{\varrho}(\ln \sigma)}, \\
H_1[\varphi s, \widetilde \varphi \widetilde s](\tau) & = \int\limits_{\sqrt{\tau}}^{\tau}
\varphi\left(\frac{\tau}{\sigma}\right)\widetilde\varphi(\sigma) s\left(\ln\frac{\tau}{\sigma}\right)
\widetilde s(\ln\sigma)\dfrac{d(\widetilde{\varrho}(\ln \sigma))}{\widetilde{\varrho}(\ln \sigma)}.
\end{align*}
The integral here should be interpreted as a Lebesgue--Stieltjes integral.

\begin{ruslemma}\label{MellinConv1}
\begin{align*}
(\varphi s \ast \widetilde \varphi \widetilde s)(\tau) &\asymp (\varphi\ast\widetilde\varphi)(\tau), \quad \tau\to\infty,
\\
H[\varphi s, \widetilde \varphi \widetilde s](\tau) &\asymp h_{\widetilde\varphi, \varphi}(\tau), \quad \tau\to\infty,
\\
H_1[\varphi s, \widetilde \varphi \widetilde s](\tau) &\asymp h_{\varphi, \widetilde\varphi}(\tau), \quad \tau\to\infty.
\end{align*}
\end{ruslemma}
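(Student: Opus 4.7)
The third asymptotic relation follows from the first two by summation, since $(\varphi s \ast \widetilde\varphi \widetilde s)(\tau) = H + H_1$ and $(\varphi\ast\widetilde\varphi)(\tau) = h_{\varphi, \widetilde\varphi}(\tau) + h_{\widetilde\varphi, \varphi}(\tau)$, so I shall focus on the bounds for $H$ and $H_1$. As a first reduction, $s$ and $\widetilde s$ are bounded above and separated from zero, hence $s(\ln(\tau/\sigma))\widetilde s(\ln\sigma)\asymp 1$ uniformly. This reduces the problem to comparing Stieltjes integrals of $\varphi(\tau/\sigma)\widetilde\varphi(\sigma)$ against $\frac{d\widetilde\varrho(\ln\sigma)}{\widetilde\varrho(\ln\sigma)}$ with the Mellin integrals of the same integrand against $\frac{d\sigma}{\sigma}$: namely, $H$ is to be compared with $h_{\widetilde\varphi,\varphi}(\tau)=\int_1^{\sqrt\tau}\widetilde\varphi(\sigma)\varphi(\tau/\sigma)\frac{d\sigma}{\sigma}$, while $H_1$ is to be compared (after the substitution $\sigma\mapsto\tau/\sigma$) with $h_{\varphi,\widetilde\varphi}(\tau)=\int_{\sqrt\tau}^\tau\varphi(\tau/\sigma)\widetilde\varphi(\sigma)\frac{d\sigma}{\sigma}$.

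The key ingredient will be an exact measure identity coming from the periodicity of $\widetilde s$. From $\widetilde s(\tau)=e^{-\tau/p}\widetilde\varrho(\tau)$ and $\widetilde s(\tau+\widetilde T)=\widetilde s(\tau)$ I get $\widetilde\varrho(\tau+\widetilde T)=e^{\widetilde T/p}\widetilde\varrho(\tau)$. Since $\widetilde\varrho$ is monotone, this yields
\[
\int_{\sigma_0}^{\sigma_0 e^{\widetilde T}}\frac{d\widetilde\varrho(\ln\sigma)}{\widetilde\varrho(\ln\sigma)} \;=\; \ln\widetilde\varrho(\ln\sigma_0+\widetilde T)-\ln\widetilde\varrho(\ln\sigma_0) \;=\; \frac{\widetilde T}{p},
\]
whereas $\int_{\sigma_0}^{\sigma_0 e^{\widetilde T}}\frac{d\sigma}{\sigma}=\widetilde T$. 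Thus on every period-sized interval of multiplicative length $e^{\widetilde T}$ the two measures carry the same total mass, up to the constant factor $1/p$.

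I would then partition the range of $\sigma$ into intervals $I_k=[e^{k\widetilde T},e^{(k+1)\widetilde T}]$. On each $I_k$ the variables $\sigma$ and $\tau/\sigma$ vary only by factors in $[e^{-\widetilde T},e^{\widetilde T}]$, so by the uniform convergence in the definition of slow variation (Proposition 2.1, item 1),
\[
\varphi(\tau/\sigma)=\varphi(\tau e^{-k\widetilde T})(1+o(1)), \qquad \widetilde\varphi(\sigma)=\widetilde\varphi(e^{k\widetilde T})(1+o(1)),
\]
uniformly in $\sigma\in I_k$ as $\tau\to\infty$ (for $k$ bounded away from both endpoints). Combining this with the boundedness of $s,\widetilde s$ and the measure identity above, I obtain
\[
\int_{I_k}\varphi(\tau/\sigma)\widetilde\varphi(\sigma)\,s(\ln(\tau/\sigma))\,\widetilde s(\ln\sigma)\,\frac{d\widetilde\varrho(\ln\sigma)}{\widetilde\varrho(\ln\sigma)} \;\asymp\; \varphi(\tau e^{-k\widetilde T})\widetilde\varphi(e^{k\widetilde T}),
\]
with the analogous Mellin integral obeying the same comparability. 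Summing over $k$ and absorbing the finitely many (possibly fractional) boundary intervals into the implied constants yields $H(\tau)\asymp h_{\widetilde\varphi,\varphi}(\tau)$ and $H_1(\tau)\asymp h_{\varphi,\widetilde\varphi}(\tau)$.

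\emph{The main obstacle} is that the two measures $\frac{d\widetilde\varrho(\ln\sigma)}{\widetilde\varrho(\ln\sigma)}$ and $\frac{d\sigma}{\sigma}$ are not pointwise comparable --- the monotone $\widetilde\varrho$ could a priori concentrate its mass on a thin subset of each period --- so comparability cannot be obtained locally, only at the scale of a full period, which is exactly what the periodicity identity provides. The remaining work, namely keeping the slow-variation approximation uniform in $k$ and controlling the boundary intervals of the partition, is routine.
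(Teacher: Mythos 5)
Your proof takes essentially the same route as the paper's: both partition $\sigma$ into multiplicative period intervals, observe that $\frac{d\widetilde\varrho(\ln\sigma)}{\widetilde\varrho(\ln\sigma)}=d\bigl(\ln\widetilde\varrho(\ln\sigma)\bigr)$ carries the fixed mass $\widetilde T/p$ per period (the paper expresses this as periodicity of the measure $d\ln(\widetilde s(\ln\sigma)\sigma^{1/p})$), and use the uniform Karamata estimate to freeze $\varphi(\tau/\sigma)\widetilde\varphi(\sigma)$ on each period — the paper's step-function operator $F_\sigma$ is just bookkeeping for exactly this freezing step. One small slip in your opening line: it is the \emph{first} relation (for the full convolution) that follows from the other two by summation, not the third.
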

\begin{proof}
We will prove the upper estimate for the first relation, the rest of the estimates could be obtained similarly. Let us introduce an operator
\begin{equation}\label{defF}
F_{\sigma}[\varphi](\xi) = \varphi(e^{j\widetilde T}\sigma)\ \text{ for }\  \xi\in[e^{j\widetilde T}, e^{(j+1)\widetilde T}),
\end{equation}
that transforms a given function $\varphi$ into a step-wise function.

\noindent Note, that
\begin{equation}\label{asympF}
F_{\sigma}[\varphi](\tau) = \varphi(\tau)(1+o(1)), \quad \tau\to\infty
\end{equation}
uniformly over
$\sigma\in[1, e^{\widetilde T}]$.
Let $k\in\mathbb{N}$ be such a number, that $e^{(k-1)\widetilde T} < \tau \leq e^{k\widetilde T}$. Then
$$
(\varphi s \ast \widetilde \varphi \widetilde s)(\tau) \leq C \int\limits_{1}^{e^{k\widetilde{T}}}
F_{e^{k\widetilde T}/\tau}[\varphi]\left(\frac{\tau}{\sigma}\right)F_1[\widetilde\varphi](\sigma) \dfrac{d(\widetilde{\varrho}(\ln \sigma))}{\widetilde{\varrho}(\ln \sigma)}.
$$
Note, that function $F_{e^{k\widetilde T}/\tau}[\varphi]\Big(\dfrac{\tau}{\sigma}\Big)F_1[\widetilde\varphi](\sigma)$ is constant with respect to $\sigma$ on every interval $(e^{j\widetilde T}, e^{(j+1)\widetilde T})$, $j=0,\ldots, k-1$. Measure $\dfrac{d(\widetilde{\varrho}(\ln \sigma))}{\widetilde{\varrho}(\ln \sigma)} \!= \!d \ln( \widetilde{s}(\ln \sigma) \sigma^{1/p} )$ is periodic with respect to $\ln\sigma$, which allows us to replace the integral with a sum. We obtain
\begin{equation*}
\begin{split}
(\varphi s \ast \widetilde \varphi \widetilde s)(\tau) & \leq C\int\limits_1^{e^{\widetilde T}}\dfrac{d(\widetilde{\varrho}(\ln \sigma))}{\widetilde{\varrho}(\ln \sigma)} \sum_{j=0}^{k-1} F_{e^{k\widetilde T}/\tau}[\varphi](\frac{\tau}{e^{j\widetilde T}})F_1[\widetilde\varphi](e^{j\widetilde T}) 
\\
&{}\leq C \int\limits_1^{e^{\widetilde T}} \dfrac{d\sigma}{\sigma} \sum_{j=0}^{k-1} F_{e^{k\widetilde T}/\tau}[\varphi](\frac{\tau}{e^{j\widetilde T}})F_1[\widetilde\varphi](e^{j\widetilde T})
\\
&{} = C \int\limits_{1}^{e^{k\widetilde{T}}}
F_{e^{k\widetilde T}/\tau}[\varphi]\left(\frac{\tau}{\sigma}\right)F_1[\widetilde\varphi](\sigma) \dfrac{d\sigma}{\sigma} \leq C \int\limits_{1}^{\tau}
\varphi\left(\frac{\tau}{\sigma}\right)\widetilde\varphi(\sigma) \dfrac{d\sigma}{\sigma}.\qedhere
\end{split}
\end{equation*}
\end{proof}

\noindent The proof of the following proposition is similar to Theorem 2.2 from \cite{NazNikKar}, so we omit it.

\begin{rusproposition}\label{SVFasymp}
Let $\widetilde\varphi(\tau) = \psi_1(\tau)(1+o(1))$ as $\tau\to\infty$. Then
\begin{align*}
H[\varphi s, \widetilde \varphi \widetilde s](\tau) =  H[\varphi s, \psi_1 \widetilde s](\tau)(1+o(1)).
\end{align*}
If, also, $\int\limits_1^\infty\widetilde\varphi(\tau)\dfrac{d\tau}{\tau} = \infty$, then
\begin{align*}
H_1[\varphi s, \widetilde \varphi \widetilde s](\tau) =  H_1[\varphi s, \psi_1 \widetilde s](\tau)(1+o(1)).
\end{align*}
\end{rusproposition}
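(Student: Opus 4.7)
The plan is to mimic the proof of Theorem~2.2 in \cite{NazNikKar} (Proposition~\ref{propSVF2}(2) here), using Lemma~\ref{MellinConv1} to reduce all order-of-magnitude comparisons for the almost Mellin convolutions $H,H_1$ to the corresponding statements for the classical Mellin convolutions $h_{\widetilde\varphi,\varphi},h_{\varphi,\widetilde\varphi}$. The additional ingredients needed --- positivity of the measure $d\widetilde\varrho(\ln\sigma)/\widetilde\varrho(\ln\sigma)$ (from monotonicity and positivity of $\widetilde\varrho$), boundedness and separation from zero of $s,\widetilde s$, and uniform SVF convergence on compact sets (Proposition~\ref{propSVF1}(1)) --- let one pull the classical argument through.

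Fix $\varepsilon>0$ and choose $N$ with $|\widetilde\varphi(\sigma)/\psi_1(\sigma) - 1|<\varepsilon$ for all $\sigma\geq N$. For each of $H$ and $H_1$, view the difference $H_\bullet[\varphi s,\widetilde\varphi\widetilde s](\tau) - H_\bullet[\varphi s,\psi_1\widetilde s](\tau)$ as an integral of the kernel $(\widetilde\varphi-\psi_1)(\sigma)\,\varphi(\tau/\sigma)\,s(\ln(\tau/\sigma))\,\widetilde s(\ln\sigma)\,d\widetilde\varrho(\ln\sigma)/\widetilde\varrho(\ln\sigma)$, and split the domain of integration into a \emph{tail} $\{\sigma\geq N\}$ and a \emph{head} $\{\sigma\leq N\}$ (the latter relevant only for $H$). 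By the pointwise bound $|\widetilde\varphi-\psi_1|\leq\varepsilon\psi_1$, positivity of the measure, and uniform bounds on $s,\widetilde s$, the tail contribution in either case is bounded in absolute value by $\varepsilon\cdot C\cdot H_\bullet[\varphi s,\psi_1\widetilde s](\tau)$.

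For $H_1$ no head is present, since $[\sqrt\tau,\tau]\subset\{\sigma\geq N\}$ once $\tau\geq N^2$, so sending $\varepsilon\to 0$ immediately yields the $(1+o(1))$ equivalence. For $H$, the head $\int_1^N$ is controlled by $C(N)\,\varphi(\tau)$ using the uniform SVF convergence $\varphi(\tau/\sigma)\sim\varphi(\tau)$ on compact $\sigma$-intervals (Proposition~\ref{propSVF1}(1)) together with boundedness of $\widetilde\varphi,\psi_1,s,\widetilde s$ over $[1,N]$. By Lemma~\ref{MellinConv1} (giving $H[\varphi s,\psi_1\widetilde s]\asymp h_{\psi_1,\varphi}$) and Proposition~\ref{propSVF2}(1), this bound is $o(H[\varphi s,\psi_1\widetilde s](\tau))$ under the integrability hypothesis $\int_1^\infty \psi_1(\sigma)\,d\sigma/\sigma=\infty$ (inherited from the corresponding hypothesis on $\widetilde\varphi$, since $\widetilde\varphi\asymp\psi_1$ at infinity).

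The single genuinely delicate step is this last comparison $\varphi(\tau) = o(H[\varphi s,\psi_1\widetilde s](\tau))$: its failure in the convergent-integral regime $\int_1^\infty\widetilde\varphi\,d\sigma/\sigma<\infty$ would leave both sides of the claimed $H$-equivalence of the same order $\varphi(\tau)$ with possibly distinct prefactors, so this is where the integrability hypothesis must enter essentially. Everything else is routine bookkeeping: the step-function approximation used in the proof of Lemma~\ref{MellinConv1}, positivity of $d\widetilde\varrho/\widetilde\varrho$, and the boundedness of the periodic factors $s,\widetilde s$.
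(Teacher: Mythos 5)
Your proof follows the right template (the NazNikKar Theorem~2.2 head/tail decomposition, filtered through Lemma~\ref{MellinConv1} to replace $H$, $H_1$ by the classical $h$-convolutions), and the individual estimates are sound: the tail on $\{\sigma\geq N\}$ picks up a factor $(1+O(\varepsilon))$ by the pointwise bound $|\widetilde\varphi-\psi_1|\leq\varepsilon\psi_1$ and positivity of $d(\widetilde\varrho(\ln\sigma))/\widetilde\varrho(\ln\sigma)$; the head on $[1,N]$ (present only when $\sigma$ ranges near $1$) contributes $\asymp\varphi(\tau)$; and that head is negligible precisely when $\int_1^\infty\widetilde\varphi\,d\sigma/\sigma=\infty$, by Lemma~\ref{MellinConv1} together with Proposition~\ref{propSVF2}(1).

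The genuine issue is that your proof allocates the hypotheses the opposite way from the proposition as printed. You correctly observe that in $H[\varphi s,\widetilde\varphi\widetilde s]$ the replaced factor $\widetilde\varphi(\sigma)$ is evaluated on $\sigma\in[1,\sqrt\tau]$ (so there \emph{is} a head, and the integrability of $\widetilde\varphi$ is essential), while in $H_1[\varphi s,\widetilde\varphi\widetilde s]$ it is evaluated on $\sigma\in[\sqrt\tau,\tau]$ (so there is no head, and no integrability is needed). This is exactly what the correspondences $H\asymp h_{\widetilde\varphi,\varphi}$, $H_1\asymp h_{\varphi,\widetilde\varphi}$ of Lemma~\ref{MellinConv1} combined with Proposition~\ref{propSVF2}(2) dictate: the unconditional replacement is for the second-slot argument of $h$ (here $H_1$), the conditional one for the first-slot argument (here $H$). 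The proposition, however, asserts the $H$-equivalence unconditionally and places $\int_1^\infty\widetilde\varphi\,d\sigma/\sigma=\infty$ on the $H_1$-equivalence --- i.e.\ with the two displays interchanged relative to the hypothesis. Your proof therefore does not establish the statement as literally written; indeed your own discussion of "the single genuinely delicate step" shows the $H$-equivalence can fail when $\int_1^\infty\widetilde\varphi\,d\sigma/\sigma<\infty$, which would contradict the printed first display. The evidence (your argument, Lemma~\ref{MellinConv1}, the index flip between $h_{\varphi,\psi}$ and $H[\varphi s,\widetilde\varphi\widetilde s]$, and the fact that every application of this proposition in the paper occurs under the standing hypothesis $\int\varphi=\int\widetilde\varphi=\infty$, where the distinction is moot) strongly suggests a typo in the proposition statement rather than a flaw in your reasoning --- but a proof attempt must flag such a discrepancy explicitly rather than silently prove the corrected version.
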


\begin{ruslemma}\label{MellinConv2}
Let $\int\limits_1^\infty\varphi(\tau)\dfrac{d\tau}{\tau} = \infty$, $\int\limits_1^\infty\widetilde\varphi(\tau)\dfrac{d\tau}{\tau} = \infty$. Then
$$
H_1[\varphi s, \widetilde \varphi \widetilde s](\tau) = H[\widetilde \varphi \widetilde s, \varphi s](\tau)(1+o(1)), \quad \tau\to\infty,
$$
and almost Mellin convolution is asymptotically symmetric, i.e.
$$
(\varphi s \ast \widetilde \varphi \widetilde s)(\tau) =  (\widetilde \varphi \widetilde s \ast \varphi s )(\tau)(1+o(1)), \quad \tau\to\infty.
$$
\end{ruslemma}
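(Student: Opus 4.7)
The plan is to first establish the identity $H_1[\varphi s, \widetilde\varphi\widetilde s](\tau) = H[\widetilde\varphi\widetilde s, \varphi s](\tau)(1+o(1))$; the asymptotic symmetry of $\ast$ then follows by applying this identity twice with the roles of the two factors exchanged, since the decomposition $(\varphi s \ast \widetilde\varphi\widetilde s) = H[\varphi s, \widetilde\varphi\widetilde s] + H_1[\varphi s, \widetilde\varphi\widetilde s]$ shows that both convolutions coincide to leading order with $H[\varphi s, \widetilde\varphi\widetilde s] + H[\widetilde\varphi\widetilde s, \varphi s]$.

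To prove the first identity I make the substitution $\sigma \mapsto \tau/\sigma$ in $H_1[\varphi s, \widetilde\varphi\widetilde s](\tau)$, which recasts it as an integral over $[1,\sqrt{\tau}]$ whose non-measure integrand $G(\sigma) = \varphi(\sigma)\widetilde\varphi(\tau/\sigma)s(\ln\sigma)\widetilde s(\ln\tau - \ln\sigma)$ matches that of $H[\widetilde\varphi\widetilde s, \varphi s](\tau)$; only the integrating measures differ, namely $d[-\ln\widetilde\varrho(\ln\tau-\ln\sigma)]$ versus $d\ln\varrho(\ln\sigma)$. Using $\ln\varrho(x) = x/p + \ln s(x)$ and its analogue for $\widetilde\varrho$ one finds
\[
\ln\varrho(\ln\sigma) + \ln\widetilde\varrho(\ln\tau - \ln\sigma) = \frac{\ln\tau}{p} + \ln h(\ln\sigma),\qquad h(y) := s(y)\widetilde s(\ln\tau - y),
\]
so the linear $1/p$-parts cancel when the measures are subtracted. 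Setting $\alpha(\sigma) := \varphi(\sigma)\widetilde\varphi(\tau/\sigma) = G(\sigma)/h(\ln\sigma)$ yields the clean formula
\[
H[\widetilde\varphi\widetilde s, \varphi s](\tau) - H_1[\varphi s, \widetilde\varphi\widetilde s](\tau) = \int_1^{\sqrt{\tau}}\alpha(\sigma)\,dh(\ln\sigma).
\]

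It remains to bound this difference by $o(h_{\varphi,\widetilde\varphi}(\tau))$, which by Lemma~\ref{MellinConv1} is of the same order as $H[\widetilde\varphi\widetilde s, \varphi s](\tau)$. Integration by parts gives $[\alpha h]_1^{\sqrt{\tau}} - \int_1^{\sqrt{\tau}}h(\ln\sigma)\,d\alpha(\sigma)$. The boundary at $\sigma = \sqrt{\tau}$ is $O(\varphi(\sqrt{\tau})\widetilde\varphi(\sqrt{\tau}))$; restricting the defining integral of $h_{\varphi,\widetilde\varphi}$ to $[\sqrt{\tau}/M,\sqrt{\tau}]$ and exploiting uniform convergence of SVF ratios produces, for each fixed $M$, the lower bound $h_{\varphi,\widetilde\varphi}(\tau) \geq (\ln M)\varphi(\sqrt{\tau})\widetilde\varphi(\sqrt{\tau})(1+o(1))$, so letting $M$ grow gives $\varphi(\sqrt\tau)\widetilde\varphi(\sqrt\tau) = o(h_{\varphi,\widetilde\varphi})$. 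The boundary at $\sigma = 1$ is $O(\widetilde\varphi(\tau)) = o(h_{\varphi,\widetilde\varphi})$ by Proposition~\ref{propSVF2}(1). For the internal integral, Proposition~\ref{SVFasymp} combined with Proposition~\ref{propSVF1}(3) allows $\varphi, \widetilde\varphi$ to be replaced by smooth equivalents with $\sigma(\ln\varphi)'(\sigma) \to 0$; on the deep part $[N_0,\sqrt{\tau}]$ one then has $|\sigma\alpha'(\sigma)/\alpha(\sigma)| \to 0$ uniformly, giving $V(\alpha;[N_0,\sqrt{\tau}]) \leq \epsilon\,h_{\varphi,\widetilde\varphi}(\tau)$ for any prescribed $\epsilon$, while $V(\alpha;[1,N_0]) = O(\widetilde\varphi(\tau)) = o(h_{\varphi,\widetilde\varphi})$. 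Since $h$ is bounded this yields $|\int h\,d\alpha| = o(h_{\varphi,\widetilde\varphi})$, completing the proof.

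The principal technical difficulty is the variation estimate near $\sigma = 1$, where the smooth-SVF asymptotics produce bounds of order $\widetilde\varphi(\tau)$ only. The divergence $\int\varphi\,d\sigma/\sigma = \infty$ via Proposition~\ref{propSVF2}(1) is essential to render this contribution $o(h_{\varphi,\widetilde\varphi}(\tau))$, especially in the subtle case where $h_{\varphi,\widetilde\varphi}(\tau)$ itself remains bounded as $\tau\to\infty$; the symmetric divergence $\int\widetilde\varphi\,d\sigma/\sigma = \infty$ then enters when the established identity is applied with the factors exchanged to obtain the second claim.
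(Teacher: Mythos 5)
Your proof is correct and follows the same overall strategy as the paper: substitute $\sigma\mapsto\tau/\sigma$ in $H_1$, reduce the comparison with $H[\widetilde\varphi\widetilde s,\varphi s]$ to an integration by parts, and show the boundary and internal contributions are $o(h_{\varphi,\widetilde\varphi}(\tau))$. Where you diverge is in how the terms are organized and estimated, and these are genuine improvements in clarity. The paper works directly with the representation $H_1=-\tau^{-1/p}\int_1^{\sqrt\tau}\varphi(\sigma)\widetilde\varphi(\tau/\sigma)\varrho(\ln\sigma)\,d\widetilde\varrho(\ln(\tau/\sigma))$ and integrates by parts at that level, whereas you first subtract the two measures and observe that the $1/p$-linear parts of $\ln\varrho$ and $\ln\widetilde\varrho$ cancel, leaving the compact identity $H[\widetilde\varphi\widetilde s,\varphi s]-H_1[\varphi s,\widetilde\varphi\widetilde s]=\int_1^{\sqrt\tau}\alpha(\sigma)\,dh(\ln\sigma)$ with $\alpha=\varphi(\sigma)\widetilde\varphi(\tau/\sigma)$ and $h(y)=s(y)\widetilde s(\ln\tau-y)$; this cleanly separates the slowly-varying part (absorbed into $\alpha$) from the bounded oscillatory part (absorbed into $h$). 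Second, for the boundary estimate $\varphi(\sqrt\tau)\widetilde\varphi(\sqrt\tau)=o(h_{\varphi,\widetilde\varphi}(\tau))$ the paper re-expresses this product as $\varphi(1)\widetilde\varphi(\tau)$ plus an integral, splits the integrand, and relies on a cancellation of two terms each equal to $h_{\varphi,\widetilde\varphi}(\tau)(1+o(1))$; your truncation of the defining integral of $h_{\varphi,\widetilde\varphi}$ to $[\sqrt\tau/M,\sqrt\tau]$ and the uniform-convergence lower bound $h_{\varphi,\widetilde\varphi}(\tau)\geq(\ln M)(1+o(1))\varphi(\sqrt\tau)\widetilde\varphi(\sqrt\tau)$ is shorter, more elementary, and — incidentally — works without appealing to $\int\varphi\,d\tau/\tau=\infty$, which the paper's argument implicitly uses there. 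Your treatment of the internal integral via a total-variation bound with $\sup|\sigma\alpha'/\alpha|\leq\varepsilon$ on the deep range is essentially what the paper gestures at by citing Proposition~\ref{SVFasymp} and ``similar arguments,'' so no substantive difference; the divergence hypotheses enter in both proofs through the $\sigma=1$ boundary and the $[1,N_0]$ variation being $O(\widetilde\varphi(\tau))=o(h_{\varphi,\widetilde\varphi})$, exactly as you note.
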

\begin{proof}
The second relation follows from the first one immediately. In order to prove the first one we are going to transform the relation
$$
H_1[\varphi s, \widetilde \varphi \widetilde s](\tau) =
\tau^{-1/p}\int\limits_{\sqrt{\tau}}^\tau
\varphi\left(\frac{\tau}{\sigma}\right)\widetilde\varphi(\sigma) \rho\left(\ln\frac{\tau}{\sigma}\right)
d(\widetilde \rho(\ln\sigma)).
$$
Let us replace $\sigma$ with $\tau/\sigma$ and integrate by parts.
\begin{multline*}
H_1[\varphi s, \widetilde \varphi \widetilde s](\tau) =
-\tau^{-1/p}\int\limits_1^{\sqrt{\tau}}
\varphi(\sigma)\widetilde\varphi\left(\frac{\tau}{\sigma}\right) \rho(\ln\sigma)
d\left(\widetilde \rho\left(\ln\frac{\tau}{\sigma}\right)\right) 
\\
 = \tau^{-1/p}\int\limits_1^{\sqrt{\tau}}
\varphi(\sigma)\widetilde\varphi\left(\frac{\tau}{\sigma}\right)
\widetilde \rho\left(\ln\frac{\tau}{\sigma}\right) d(\rho(\ln\sigma)) +
\left. \varphi(\sigma)\widetilde\varphi\left(\frac{\tau}{\sigma}\right) s(\ln\sigma)
\widetilde s\left(\ln\frac{\tau}{\sigma}\right)\right|_1^{\sqrt{\tau}}
\\
+\int\limits_1^{\sqrt{\tau}}
\Big(
\dfrac{\sigma\varphi'(\sigma)}{\varphi(\sigma)}-
\dfrac{(\tau/\sigma)\widetilde\varphi'(\tau/\sigma)}{\widetilde\varphi(\tau/\sigma)}
\Big)
\varphi(\sigma)\widetilde\varphi\left(\frac{\tau}{\sigma}\right)
\widetilde s\left(\ln\frac{\tau}{\sigma}\right) s(\ln\sigma) \dfrac{d\sigma}{\sigma}.
\end{multline*}
The first term equals $H[\widetilde \varphi \widetilde s, \varphi s](\tau)$.
What remains is to show, that the second and the third terms satisfy the estimate
$o(H[\widetilde \varphi \widetilde s, \varphi s](\tau))$. Let's look at the second term.
\begin{align*}
&\left. \varphi(\sigma)\widetilde\varphi\left(\frac{\tau}{\sigma}\right) s(\ln\sigma)
\widetilde s\left(\ln\frac{\tau}{\sigma}\right)\right|_1^{\sqrt{\tau}} \\
&\qquad =
\varphi(\sqrt{\tau})\widetilde\varphi(\sqrt{\tau}) s(\ln\sqrt{\tau})
\widetilde s(\ln\sqrt{\tau})
- \varphi(1)\widetilde\varphi(\tau) s(0)
\widetilde s(\ln\tau).
\end{align*}
All periodic components are bounded.
$$
\widetilde\varphi(\tau) = o(h_{\varphi, \widetilde\varphi}(\tau)) = o(H[\widetilde \varphi \widetilde s, \varphi s](\tau)), \quad \tau\to\infty
$$
according to Proposition~\ref{propSVF2}, part 1, in view of Lemma~\ref{MellinConv1}. Thus, it is sufficient to estimate
{\allowdisplaybreaks
\begin{align*}
\varphi(\sqrt{\tau})\widetilde \varphi(\sqrt{\tau})& = 
\varphi(1)\widetilde\varphi(\tau) + \int\limits_1^{\sqrt{\tau}} 
\Big( \varphi(\sigma)\widetilde\varphi\left(\frac{\tau}{\sigma}\right) \Big)'_\sigma \,d\sigma
\\
& = \varphi(1)\widetilde\varphi(\tau) + \int\limits_1^{\sqrt{\tau}}\Big(
\dfrac{\sigma\varphi'(\sigma)}{\varphi(\sigma)}-
\dfrac{(\tau/\sigma)\widetilde\varphi'(\tau/\sigma)}{\widetilde\varphi(\tau/\sigma)}
\Big)
\varphi(\sigma)\widetilde\varphi\left(\frac{\tau}{\sigma}\right)
 \dfrac{d\sigma}{\sigma}
 \\
 &= \varphi(1)\widetilde\varphi(\tau) + \int\limits_1^{\sqrt{\tau}}\Big(
1 + \dfrac{\sigma\varphi'(\sigma)}{\varphi(\sigma)}
\Big)
\varphi(\sigma)\widetilde\varphi\left(\frac{\tau}{\sigma}\right)
 \dfrac{d\sigma}{\sigma}
 \\
&\qquad \qquad\quad- \int\limits_1^{\sqrt{\tau}}
\varphi(\sigma)\Big( 1 +
\dfrac{(\tau/\sigma)\widetilde\varphi'(\tau/\sigma)}{\widetilde\varphi(\tau/\sigma)}
\Big)\widetilde\varphi\left(\frac{\tau}{\sigma}\right)
 \dfrac{d\sigma}{\sigma}
 \\
 &= o(h_{\varphi, \widetilde\varphi}(\tau)) +  h_{\varphi, \widetilde\varphi}(\tau)(1+o(1)) -  h_{\varphi, \widetilde\varphi}(\tau)(1+o(1))
 \\
 & = o(h_{\varphi, \widetilde\varphi}(\tau)) = o(H[\widetilde \varphi \widetilde s, \varphi s](\tau)), \quad \tau\to\infty.
\end{align*}
}
We used Proposition~\ref{propSVF2}, part 2, and Proposition~\ref{propSVF1}, part 3, when estimating the integrals above.

Using similar arguments and utilizing Proposition~\ref{SVFasymp} we obtain the estimate
$$
\int\limits_1^{\sqrt{\tau}}
\Big(
\dfrac{\sigma\varphi'(\sigma)}{\varphi(\sigma)}-
\dfrac{(\tau/\sigma)\widetilde\varphi'(\tau/\sigma)}{\widetilde\varphi(\tau/\sigma)}
\Big)
\varphi(\sigma)\widetilde\varphi\left(\frac{\tau}{\sigma}\right)
\widetilde s\left(\ln\frac{\tau}{\sigma}\right) s(\ln\sigma) \dfrac{d\sigma}{\sigma} =
o(H[\widetilde \varphi \widetilde s, \varphi s](\tau))
$$
as $\tau\to\infty$, which concludes the proof of the lemma.
\end{proof}

\paragraph{{\bf The case of coinciding periods.}} 
Let's consider the case, when functions $s$ and $\widetilde s$ have a common period ($T = \widetilde{T}$). Denote
$$
(s \star \widetilde{s})(\eta) := \dfrac{1}{T}\int\limits_0^T s(\eta - \lda)\widetilde s(\lda)\,d\lda.
$$
Note, that there exists a continuous derivative
\begin{equation}\label{sotimes}
(s \star \widetilde{s})'(\eta) =
\dfrac{1}{T}\int\limits_0^T s(\eta - \lda) d(\widetilde s(\lda))
= -\dfrac{1}{p}(s \star \widetilde{s})(\eta) + e^{-\eta/p}\dfrac{1}{T}\int\limits_0^T \varrho(\eta - \lda)\,d\widetilde{\varrho}(\lda).
\end{equation}
The fact, that it is continuous, follows from the continuousness of $\varrho$ and $\widetilde\varrho$.

\begin{ruslemma} 
Let $\int\limits_1^{\infty}\widetilde\varphi(\tau)\dfrac{d\tau}{\tau} = \infty$,  $s$ and $\widetilde s$ have a common period $T$. 
Then
$$
\int\limits_1^{\sqrt{\tau}} \varphi\left(\frac{\tau}{\sigma}\right) s\left(\ln\frac{\tau}{\sigma}\right)
\widetilde\varphi(\sigma) \widetilde s(\ln\sigma)\dfrac{d\sigma}{\sigma} \sim h_{\widetilde\varphi, \varphi}(\tau)
(s \star \widetilde{s})(\ln\tau), \quad \tau\to\infty.
$$
If, also, $\int\limits_1^{\infty}\varphi(\tau)\dfrac{d\tau}{\tau} = \infty$, then
$$
\int\limits_1^{\tau} \varphi\left(\frac{\tau}{\sigma}\right) s\left(\ln\frac{\tau}{\sigma}\right)
\widetilde\varphi(\sigma) \widetilde s(\ln\sigma)\dfrac{d\sigma}{\sigma} \sim (\varphi\ast \widetilde\varphi)(\tau)
(s \star \widetilde{s})(\ln\tau), \quad \tau\to\infty.
$$
\end{ruslemma}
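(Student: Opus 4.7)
The plan is to pass to the logarithmic variables $\eta = \ln\tau$, $\lambda = \ln\sigma$, partition the range of integration into intervals of length $T$, and on each of them use the common period $T$ to evaluate the periodic factor exactly as $T\,(s\star\widetilde s)(\eta)$, while the slowly varying factors are frozen at $1+o(1)$ cost. Concretely, for the first relation I would choose $k\in\mathbb{N}$ with $kT \le \eta/2 < (k+1)T$ and split $[0,kT]$ into the intervals $I_j=[jT,(j+1)T]$, $j=0,\dots,k-1$. On each $I_j$ I replace $\varphi(e^{\eta-\lambda})$ and $\widetilde\varphi(e^{\lambda})$ by their values at the left endpoint $\lambda=jT$ via the step-wise operator $F$ from \eqref{defF}; the uniform estimate \eqref{asympF} (which rests on Proposition~\ref{propSVF1}, part~1) shows that this replacement costs only a factor $1+o(1)$ independent of $j$. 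After the replacement the periodic factor on $I_j$ integrates to
\[
\int_{jT}^{(j+1)T} s(\eta-\lambda)\widetilde s(\lambda)\,d\lambda = \int_0^T s(\eta-\mu)\widetilde s(\mu)\,d\mu = T\,(s\star\widetilde s)(\eta),
\]
by the common $T$-periodicity of $s$ and $\widetilde s$.

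Summation in $j$ then produces $T(s\star\widetilde s)(\eta)\sum_{j=0}^{k-1}\varphi(e^{\eta-jT})\widetilde\varphi(e^{jT})$, and the same step-function device applied in reverse identifies this Riemann sum with $(s\star\widetilde s)(\ln\tau)\cdot h_{\widetilde\varphi,\varphi}(\tau)\cdot(1+o(1))$. The contributions of the leftover cell $[kT,\eta/2]$ and of the endpoint boundary terms are $O(\varphi(\sqrt\tau)\widetilde\varphi(\sqrt\tau))$, which is $o(h_{\widetilde\varphi,\varphi}(\tau))$ by Proposition~\ref{propSVF2}, part~1, under the assumption $\int_1^\infty\widetilde\varphi(\sigma)\,\frac{d\sigma}{\sigma}=\infty$. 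This yields the first asserted equivalence.

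For the second relation I would split the integral at $\sqrt\tau$. The integral over $[1,\sqrt\tau]$ is handled by the first relation and contributes $h_{\widetilde\varphi,\varphi}(\tau)(s\star\widetilde s)(\ln\tau)\cdot(1+o(1))$. In the integral over $[\sqrt\tau,\tau]$ the substitution $\sigma\mapsto\tau/\sigma$ produces an integral over $[1,\sqrt\tau]$ with the roles of $(\varphi,s)$ and $(\widetilde\varphi,\widetilde s)$ interchanged; the first relation applied with those roles swapped (the additional hypothesis $\int_1^\infty\varphi(\sigma)\,\frac{d\sigma}{\sigma}=\infty$ is exactly what this swap requires) gives $h_{\varphi,\widetilde\varphi}(\tau)(\widetilde s\star s)(\ln\tau)\cdot(1+o(1))$. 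Adding the two halves, using the symmetry $(\widetilde s\star s)(\eta)=(s\star\widetilde s)(\eta)$ (immediate from the substitution $\lambda\mapsto\eta-\lambda$ in one period together with $T$-periodicity of the integrand), and the identity $h_{\widetilde\varphi,\varphi}(\tau)+h_{\varphi,\widetilde\varphi}(\tau) = (\varphi\ast\widetilde\varphi)(\tau)$, one obtains the claim.

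The main obstacle is the uniform control of the slowly varying replacement errors across all $k=k(\tau)\to\infty$ intervals $I_j$, and of the leftover cells at the endpoints. This is precisely where the step-wise device \eqref{defF}--\eqref{asympF} from the proof of Lemma~\ref{MellinConv1} and the estimate $\widetilde\varphi(\tau)=o(h_{\varphi,\widetilde\varphi}(\tau))$ from Proposition~\ref{propSVF2}, part~1, carry the real weight; the remainder is bookkeeping.
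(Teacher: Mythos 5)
Your decomposition is essentially the one in the paper: partition the log-axis into cells of length $T$, use the common period to factor out $(s\star\widetilde s)(\ln\tau)$, recognize the remaining sum of frozen SVF values as a step-function (Riemann) approximation of $h_{\widetilde\varphi,\varphi}$ via the operator $F$ of \eqref{defF}, and obtain the second relation by splitting at $\sqrt\tau$ and substituting $\sigma\mapsto\tau/\sigma$, using the symmetry of $\star$. One correction to the bookkeeping: the replacement $\widetilde\varphi(e^{\lambda})\mapsto\widetilde\varphi(e^{jT})$ on $I_j$ is \emph{not} a factor $1+o(1)$ ``independent of $j$'' --- the uniform estimate \eqref{asympF} gives $1+o(1)$ only as the argument goes to infinity, so for the finitely many small-$j$ cells the ratio is merely bounded. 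The device that actually closes this, and the one the paper invokes, is Proposition~\ref{propSVF2}, part~2 (rather than part~1): it replaces the step-wise SVF by the continuous one \emph{inside} the $h$-integral, and the hypothesis $\int_1^\infty\widetilde\varphi(\sigma)\,d\sigma/\sigma=\infty$ is precisely what makes the contribution of the bad small-$j$ cells asymptotically negligible.
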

\begin{proof}
For $e^{2(k-1)T} < \tau \leq e^{2kT}$ we conclude
\begin{align*}
\int\limits_1^{\sqrt\tau} \varphi\left(\frac{\tau}{\sigma}\right)& s\left(\ln\frac{\tau}{\sigma}\right)
\widetilde \varphi(\sigma) \widetilde s(\ln\sigma)\dfrac{d\sigma}{\sigma} \sim
\int\limits_1^{e^{kT}} \varphi\left(\frac{\tau}{\sigma}\right) s\left(\ln\frac{\tau}{\sigma}\right)
\widetilde \varphi(\sigma) \widetilde s(\ln\sigma)\dfrac{d\sigma}{\sigma}
\\
&=\sum_{j=0}^{k-1}\int\limits_1^{e^T} \varphi(e^{-jT}\cdot\frac{\tau}{\sigma}) s\left(\ln\frac{\tau}{\sigma}\right)
\widetilde \varphi(e^{jT}\sigma) \widetilde s(\ln\sigma)\dfrac{d\sigma}{\sigma}
\\
& =
\int\limits_1^{e^T} s\left(\ln\frac{\tau}{\sigma}\right) \widetilde s(\ln\sigma)\sum_{j=0}^{k-1}\varphi(e^{-jT}\cdot\frac{\tau}{\sigma})\widetilde \varphi(e^{jT}\sigma)\dfrac{d\sigma}{\sigma}
\\
&=
\int\limits_1^{e^T} s\left(\ln\frac{\tau}{\sigma}\right) \widetilde s(\ln\sigma)T^{-1}\int\limits_1^{e^{kT}}
F_{e^{-(k-1)T}\cdot\frac{\tau}{\sigma}}[\varphi](e^{kT}/\xi)F_{\sigma}[\widetilde\varphi](\xi)\dfrac{d\xi}{\xi}\dfrac{d\sigma}{\sigma},
\end{align*}
where operator $F$ is introduced in \eqref{defF}. Considering the asymptotics~\eqref{asympF} and Proposition~\ref{propSVF2}, part 2, we obtain
$$
\int\limits_1^{\sqrt\tau} \varphi\left(\frac{\tau}{\sigma}\right) s\left(\ln\frac{\tau}{\sigma}\right)
\widetilde \varphi(\sigma) \widetilde s(\ln\sigma)\dfrac{d\sigma}{\sigma} \sim h_{\widetilde\varphi, \varphi}(e^{kT})(s \star \widetilde{s})(\ln\tau)\sim h_{\widetilde\varphi, \varphi}(\tau)(s \star \widetilde{s})(\ln\tau).
$$
The second part of the lemma could be proven similarly, if we consider the relation $\int_1^{\infty}\varphi(\tau)\dfrac{d\tau}{\tau} = \infty$.
\end{proof}

\begin{ruslemma}\label{raznos} Let $\int_1^{\infty}\widetilde\varphi(\tau)\dfrac{d\tau}{\tau} = \infty$,  $s$ and $\widetilde s$ have a common period $T$. 
Then
$$
H[\varphi s, \widetilde \varphi \widetilde s](\tau) \sim h_{\widetilde\varphi, \varphi}(\tau)
\Big(
\dfrac{1}{p}(s \star \widetilde{s})+(s \star \widetilde{s})'
\Big)(\ln\tau), \quad \tau\to\infty.
$$
If, also, $\int\limits_1^{\infty}\varphi(\tau)\dfrac{d\tau}{\tau} = \infty$, then
$$
(\varphi s \ast \widetilde \varphi \widetilde s)(\tau) \sim (\varphi\ast \widetilde\varphi)(\tau)
\Big(
\dfrac{1}{p}(s \star \widetilde{s})+(s \star \widetilde{s})'
\Big)(\ln\tau), \quad \tau\to\infty.
$$
\end{ruslemma}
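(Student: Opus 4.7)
The plan is to adapt the proof of the preceding (unnamed) lemma, inheriting its sum-to-integral scheme but replacing $d\sigma/\sigma$ by the signed Lebesgue--Stieltjes measure $d(\widetilde\varrho(\ln\sigma))/\widetilde\varrho(\ln\sigma)$ and finishing with the identity \eqref{sotimes}. As in that lemma, I would first fix $k$ with $e^{2(k-1)T}<\tau\le e^{2kT}$, split $[1,e^{kT}]$ into blocks $[e^{jT},e^{(j+1)T}]$, $j=0,\dots,k-1$, and on each block substitute $\sigma = e^{jT}\sigma'$ with $\sigma'\in[1,e^T]$. Since $s$ and $\widetilde s$ are $T$-periodic and $\widetilde\varrho(\ln\sigma)$ is $T$-periodic in $\ln\sigma$, neither the periodic factors nor the measure depend on $j$, so the entire sum over $j$ can be pulled outside a single integration over $\sigma'\in[1,e^T]$.

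Next I would use the step-function operator $F_\sigma$ introduced in \eqref{defF}, together with the uniform asymptotic \eqref{asympF} and Proposition~\ref{propSVF2}, part 2, to identify the resulting inner sum $\sum_{j=0}^{k-1}\varphi(e^{-jT}\tau/\sigma')\widetilde\varphi(e^{jT}\sigma')$ with $T^{-1}h_{\widetilde\varphi,\varphi}(\tau)(1+o(1))$, uniformly in $\sigma'\in[1,e^T]$. This is exactly the maneuver carried out in the proof of the preceding lemma; the hypothesis $\int_1^\infty\widetilde\varphi(\tau)\frac{d\tau}{\tau}=\infty$, together with Proposition~\ref{propSVF2}, part 1, guarantees that the boundary contributions from the cut-off at $\sqrt\tau$ are swallowed by $h_{\widetilde\varphi,\varphi}(\tau)$.

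The algebraic heart of the argument then reduces to evaluating $T^{-1}\int_0^T s(\ln\tau-\lambda)\widetilde s(\lambda)\,\frac{d\widetilde\varrho(\lambda)}{\widetilde\varrho(\lambda)}$. Expanding $\widetilde s(\lambda)=e^{-\lambda/p}\widetilde\varrho(\lambda)$ and $s(\ln\tau-\lambda)=\tau^{-1/p}e^{\lambda/p}\varrho(\ln\tau-\lambda)$ collapses the integrand to $\tau^{-1/p}\varrho(\ln\tau-\lambda)\,d\widetilde\varrho(\lambda)$, and the identity \eqref{sotimes}, rewritten at $\eta=\ln\tau$ as
$$
\frac{1}{T}\int_0^T\varrho(\eta-\lambda)\,d\widetilde\varrho(\lambda) = e^{\eta/p}\left(\tfrac{1}{p}(s\star\widetilde s)+(s\star\widetilde s)'\right)(\eta),
$$
produces exactly the desired factor $\bigl(\tfrac{1}{p}(s\star\widetilde s)+(s\star\widetilde s)'\bigr)(\ln\tau)$, completing the first claim.

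For the second assertion, Lemma~\ref{MellinConv2} gives $(\varphi s\ast\widetilde\varphi\widetilde s)(\tau)\sim H[\varphi s,\widetilde\varphi\widetilde s](\tau)+H[\widetilde\varphi\widetilde s,\varphi s](\tau)$, and I would apply the first claim to each summand separately. The commutativity $(s\star\widetilde s)=(\widetilde s\star s)$ on the circle $\mathbb{R}/T\mathbb{Z}$ makes the bracket $\tfrac{1}{p}(s\star\widetilde s)+(s\star\widetilde s)'$ common to both contributions, and the SVF prefactors combine as $h_{\widetilde\varphi,\varphi}(\tau)+h_{\varphi,\widetilde\varphi}(\tau)=(\varphi\ast\widetilde\varphi)(\tau)$. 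The main technical hurdle is the uniformity in $\sigma'\in[1,e^T]$ of the step-function approximation in the second step, since it must be pulled out from under the Lebesgue--Stieltjes integral against the bounded variation measure $d\widetilde\varrho(\ln\sigma')/\widetilde\varrho(\ln\sigma')$; this, however, is precisely the estimate already established in the preceding lemma, so no new ingredients are required.
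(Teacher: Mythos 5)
Your proposal is correct and follows the same route the paper intends: the paper's proof of Lemma~\ref{raznos} literally states ``Exactly the same proof as for the previous lemma'' and then verifies the single block identity $\int_1^{e^T} s(\ln(\tau/\sigma))\widetilde s(\ln\sigma)\frac{d(\widetilde\varrho(\ln\sigma))}{\widetilde\varrho(\ln\sigma)} = \tau^{-1/p}\int_0^T\varrho(\ln\tau-\lambda)\,d\widetilde\varrho(\lambda)$, which combined with \eqref{sotimes} yields the bracket $\frac{1}{p}(s\star\widetilde s)+(s\star\widetilde s)'$, exactly as you derive. Your handling of the second claim via Lemma~\ref{MellinConv2} and the commutativity $s\star\widetilde s=\widetilde s\star s$ is a clean way to do what the paper leaves implicit, and the algebraic collapse of the periodic factors you spell out is precisely the computation the paper compresses into one line.
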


\begin{proof}
Exactly the same proof as for the previous lemma. We just need to verify, that
$$
\int\limits_1^{e^T} s\left(\ln\frac{\tau}{\sigma}\right) \widetilde s(\ln\sigma)\dfrac{d(\widetilde{\varrho}(\ln \sigma))}{\widetilde{\varrho}(\ln \sigma)} =
\tau^{-1/p}\int\limits_0^T \varrho(\ln\tau - \lda)\,d\widetilde{\varrho}(\lda),
$$
which is clear, if we substitute $\lda = \ln\sigma$ in the left part.
\end{proof}

\paragraph{{\bf The case of incommensurable periods.}} Now, let the functions $s$ and $\widetilde s$ have no common period. 

\begin{ruslemma}\label{MellinConv3}
If periods $T$ and $\widetilde{T}$ are incommensurable, then 
$$
\int\limits_1^\tau s(\ln(\omega/\sigma)) \widetilde{s}(\ln\sigma)\dfrac{d(\widetilde{\varrho}(\ln \sigma))}{\widetilde{\varrho}(\ln \sigma)} = 
(\Cspec+o(1))\ln\tau, \qquad \tau\to+\infty
$$
uniformly over $\omega\in \mathbb{R}$, where 
\begin{equation}\label{Cdef}
\Cspec = \dfrac{1}{p}\cdot\dfrac{1}{T}
\int\limits_0^{T} s(t)\,dt \cdot \dfrac{1}{\widetilde{T}}
\int\limits_0^{\widetilde{T}} \widetilde{s}(t)\,dt.
\end{equation}
\end{ruslemma}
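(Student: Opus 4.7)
The plan is to change variables $u=\ln\sigma$ so that the integral becomes
$$
I(\omega,L)=\int_0^L s(\ln\omega - u)\,\widetilde s(u)\,d\mu(u),\qquad L=\ln\tau,
$$
with $d\mu(u)=\frac{d\widetilde\varrho(u)}{\widetilde\varrho(u)}$. From $\widetilde\varrho(v+\widetilde T)=e^{\widetilde T/p}\widetilde\varrho(v)$, which follows from the form $\widetilde s(v)=e^{-v/p}\widetilde\varrho(v)$ and $\widetilde T$-periodicity of $\widetilde s$, one sees that $d\mu$ is $\widetilde T$-periodic. Cutting $[0,L]$ into $k=\lfloor L/\widetilde T\rfloor$ full periods plus a remainder, the substitution $u=j\widetilde T+v$ on the $j$-th piece turns its contribution into
$$
\int_0^{\widetilde T} s(\ln\omega-j\widetilde T-v)\,\widetilde s(v)\,d\mu(v)=:f(\ln\omega - j\widetilde T),
$$
so that $I(\omega,L)=\sum_{j=0}^{k-1} f(\ln\omega-j\widetilde T)+R(\omega,L)$, where $|R(\omega,L)|$ is controlled by a single period's contribution and is hence uniformly bounded in both $\omega$ and $L$.

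Next I would observe that $f$ is continuous (by boundedness of $s,\widetilde s$ and finiteness of the variation of $d\mu$ on $[0,\widetilde T]$) and $T$-periodic in its argument (by $T$-periodicity of $s$). Since $T$ and $\widetilde T$ are incommensurable, $\widetilde T/T$ is irrational and the uniform version of Weyl's equidistribution theorem for continuous $T$-periodic functions yields
$$
\frac1k\sum_{j=0}^{k-1}f(\ln\omega-j\widetilde T)=\frac1T\int_0^T f(y)\,dy+o(1)\qquad(k\to\infty),
$$
with the $o(1)$ uniform in $\omega\in\mathbb R$. Combined with $k\widetilde T=\ln\tau+O(1)$ and the uniform bound on $R$, this gives $I(\omega,\ln\tau)=\bigl(\tfrac{1}{\widetilde T}\cdot\tfrac1T\int_0^T f\,+\,o(1)\bigr)\ln\tau$ uniformly in $\omega$.

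It then remains to identify the prefactor with $\Cspec$. By Fubini (legitimate since $\widetilde s\,d\mu$ has finite total variation on $[0,\widetilde T]$ and $s$ is bounded) and $T$-periodicity of $s$,
$$
\frac1T\int_0^T f(y)\,dy=\frac1T\int_0^T s(y)\,dy\cdot\int_0^{\widetilde T}\widetilde s(v)\,d\mu(v),
$$
and the second factor is computed by integration by parts:
$$
\int_0^{\widetilde T}\widetilde s(v)\,d\mu(v)=\int_0^{\widetilde T}e^{-v/p}\,d\widetilde\varrho(v)=\bigl[e^{-v/p}\widetilde\varrho(v)\bigr]_0^{\widetilde T}+\frac1p\int_0^{\widetilde T}\widetilde s(v)\,dv=\frac1p\int_0^{\widetilde T}\widetilde s(v)\,dv,
$$
the boundary term vanishing because $\widetilde\varrho(\widetilde T)=e^{\widetilde T/p}\widetilde\varrho(0)$. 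Dividing by $\widetilde T$ produces exactly $\Cspec$ from \eqref{Cdef}.

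The main obstacle is the uniformity in $\omega$, which is supplied by the uniform form of Weyl's theorem (applicable precisely because $f$ is continuous); for merely Riemann-integrable $f$ one would get only pointwise convergence. The only other delicate point is that $d\mu$ is a signed Lebesgue--Stieltjes measure rather than a Lebesgue density, but monotonicity of $\widetilde\varrho$ together with continuity and boundedness of $\widetilde s$ keep its total variation over one period finite, which is all that is needed for the Fubini swap and the integration by parts above.
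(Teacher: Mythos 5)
Your proof is correct, and while it rests on the same key ingredient as the paper's argument — the uniform (Oxtoby) version of Weyl's equidistribution theorem for irrational rotations — the decomposition you use is dual to the paper's and handles the uniformity in $\omega$ more directly. The paper tracks the increment $Q(t+T)-Q(t)$ over shifts by $T$ (the period of $s$), producing a $\widetilde T$-periodic increment $q_1$ that is then averaged by Oxtoby; since this forces the upper limit of integration and the argument of $s$ to move in lockstep, it only directly gives the case $\omega=\tau$, and a separate Step~2 is needed to reduce general $\omega$ to that case by splitting off a uniformly bounded boundary piece. You instead cut the domain $[0,\ln\tau]$ into full $\widetilde T$-periods (the period of the measure $d\mu$ and of $\widetilde s$), extracting a $T$-periodic integral $f(y)=\int_0^{\widetilde T}s(y-v)\widetilde s(v)\,d\mu(v)$ whose argument contains $\ln\omega$ as a free translation parameter; because Oxtoby's theorem is uniform in the starting point of the orbit, the uniformity over $\omega\in\mathbb R$ comes for free and no second step is required. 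Your identification of the prefactor via Fubini plus integration by parts (using $\widetilde\varrho(\widetilde T)=e^{\widetilde T/p}\widetilde\varrho(0)$ to kill the boundary term) is cleaner than the paper's computation of $\int_0^{\widetilde T}q_1$; both of course land on the same $\Cspec$. The only points worth making explicit that you glossed over are (i) the $\widetilde T$-periodicity of $d\mu=d\widetilde\varrho/\widetilde\varrho$, which follows from $\widetilde\varrho(v+\widetilde T)=e^{\widetilde T/p}\widetilde\varrho(v)$ since the numerator and denominator pick up the same factor $e^{\widetilde T/p}$, and (ii) the continuity of $f$, which follows from the uniform continuity of the bounded periodic $s$ (dominated convergence against the finite measure $e^{-v/p}d\widetilde\varrho(v)$ on $[0,\widetilde T]$); neither is a gap, just worth a line.
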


\begin{proof}
\emph{Step $1$.} Let's prove the estimate
\begin{equation}\label{Step1}
\int\limits_1^\tau s(\ln(\tau/\sigma)) \widetilde{s}(\ln\sigma)\dfrac{d(\widetilde{\varrho}(\ln \sigma))}{\widetilde{\varrho}(\ln \sigma)} = 
(\Cspec+o(1))\ln\tau, \qquad \tau\to+\infty.
\end{equation}
Substitute $t=\ln\tau$, $r=\ln\sigma$.
$$
\int\limits_1^\tau s(\ln(\tau/\sigma)) \widetilde{s}(\ln\sigma)\dfrac{d(\widetilde{\varrho}(\ln \sigma))}{\widetilde{\varrho}(\ln \sigma)} = \int\limits_0^t s(t-r)e^{-r/p}d\widetilde{\varrho}(r) =: Q(t).
$$
Define a $\widetilde T$-periodic function 
$$
q(t) := \int\limits_0^{T} s(r)\widetilde{s}(t+T-r)\,dr = \int\limits_{t}^{t+T} s(t-r)\widetilde s(r) \,dr.
$$
Note, that there exists a continuous derivative
$$
q'(t) = \int\limits_0^{T} s(r)d\widetilde{s}(t+T-r) = \int\limits_{t}^{t+T} s(t-r)d\widetilde s(r) = -\dfrac{1}{p} \cdot q(t) + \int\limits_t^{t+T} s(t-r)e^{-r/p}d\widetilde{\varrho}(r).
$$
Thus,
\begin{equation*}
Q(t+T) - Q(t) = \int\limits_t^{t+T} s(t-r)e^{-r/p}d\widetilde{\varrho}(r) = q'(t) + \dfrac{1}{p} \cdot q(t) =: q_1(t),
\end{equation*}
where $q_1(t)$ is a continuous $\widetilde{T}$-periodic function. Hence,
\begin{equation}\label{Qsum}
Q(t+n T) = Q(t) + \sum\limits_{k=0}^{n-1} q_1(t+k T).
\end{equation}
Using Oxtoby's ergodic theorem (see \cite{Oxtoby}) we obtain
\begin{equation}\label{Oxt}
\lim_{n\to+\infty} \dfrac{1}{n}\sum\limits_{k=0}^{n-1} q_1(t+k T) = \dfrac{1}{\widetilde{T}}
\int\limits_0^{\widetilde{T}} q_1(t)\,dt
\end{equation}
uniformly with respect to $t$. From \eqref{Qsum} and \eqref{Oxt}
we obtain the estimate
$$
Q(t) = (\Cspec+o(1))t, \qquad t\to+\infty,
$$
where
$$
\Cspec = \dfrac{1}{T\widetilde{T}}
\int\limits_0^{\widetilde{T}} q_1(t)\,dt = \dfrac{1}{p}\cdot\dfrac{1}{T}
\int\limits_0^{T} s(t)\,dt \cdot \dfrac{1}{\widetilde{T}}
\int\limits_0^{\widetilde{T}} \widetilde{s}(t)\,dt.
$$
Substitute $t=\ln\tau$, and the formula \eqref{Step1} is proven.
\vskip+0.5cm

\noindent \emph{Step $2$.} For every value of $\tau$ we could select $k(\tau)\in \mathbb{Z}$, such that $$
0 \leq \tau - \omega - Tk(\tau) < T.
$$
Then
\begin{multline*}
\int\limits_1^\tau s(\ln(\omega/\sigma)) \widetilde{s}(\ln\sigma)\dfrac{d(\widetilde{\varrho}(\ln \sigma))}{\widetilde{\varrho}(\ln \sigma)} =
\int\limits_{\omega+Tk(\tau)}^\tau s(\ln(\omega/\sigma)) \widetilde{s}(\ln\sigma)\dfrac{d(\widetilde{\varrho}(\ln \sigma))}{\widetilde{\varrho}(\ln \sigma)} 
\\
+ \int\limits_1^{\omega+Tk(\tau)} s(\ln((\omega+Tk(\tau))/\sigma)) \widetilde{s}(\ln\sigma)\dfrac{d(\widetilde{\varrho}(\ln \sigma))}{\widetilde{\varrho}(\ln \sigma)}.
\end{multline*}
The first term is uniformly bounded, and the second one satisfies the estimate
\begin{equation*}
(\Cspec+o(1))\ln(\omega+Tk(\tau)) = (\Cspec+o(1))\ln\tau, \qquad \tau\to+\infty.\qedhere
\end{equation*}
\end{proof}
\section{Spectral asymptotics of tensor products}
\begin{ruslemma}\label{ogrVar}
In the formula \eqref{Tasymp} function $s$ has the form
 $$
s(\tau) = e^{-\tau/p} \varrho(\tau),
$$
where $\varrho$ is a monotonous function, and thus $s$ is a bounded variation function.
\end{ruslemma}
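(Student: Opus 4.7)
The plan is to deduce the structural form of $s$ directly from the monotonicity of the counting function $\mathcal{N}(\cdot,\mathcal{T})$, combined with the periodicity of $s$ and the uniform convergence property of the slowly varying factor $\varphi$. Set $N(\tau):=\mathcal{N}(e^{-\tau},\mathcal{T})$ and $\varrho(\tau):=e^{\tau/p}s(\tau)$. Since $\mathcal{N}$ is nonincreasing in $t$, the function $N$ is nondecreasing in $\tau$; meanwhile, Proposition~\ref{proposEIGEN} gives
\begin{equation*}
N(\tau)=e^{\tau/p}\varphi(e^{\tau})\,s(\tau)\,(1+o(1)),\qquad \tau\to\infty.
\end{equation*}
The goal is to show that $\varrho$ is monotone nondecreasing on $\mathbb{R}$, after which $s=e^{-\tau/p}\varrho$ has the claimed form.

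For any $0\le r_1<r_2\le T$ and $n\to\infty$, I would apply the asymptotic with $\tau=nT+r_i$. By the uniform convergence for SVFs (Proposition~\ref{propSVF1}, part 1), the ratio $\varphi(e^{nT+r_2})/\varphi(e^{nT+r_1})\to 1$, while the periodicity of $s$ gives $s(nT+r_i)=s(r_i)$. Dividing the two asymptotic equalities yields
\begin{equation*}
\frac{N(nT+r_2)}{N(nT+r_1)}=e^{(r_2-r_1)/p}\,\frac{s(r_2)}{s(r_1)}\,(1+o(1)),\qquad n\to\infty.
\end{equation*}
The left-hand side is always $\ge 1$ since $N$ is nondecreasing and $s$ is bounded away from zero, so letting $n\to\infty$ forces $e^{r_2/p}s(r_2)\ge e^{r_1/p}s(r_1)$, i.e.\ $\varrho(r_2)\ge\varrho(r_1)$. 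Periodicity of $s$ translates into $\varrho(\tau+T)=e^{T/p}\varrho(\tau)$, and this relation propagates the monotonicity of $\varrho$ from $[0,T]$ to all of $\mathbb{R}$.

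For the bounded variation claim, $s=e^{-\tau/p}\varrho$ is the product of two monotone functions; each has finite variation on $[0,T]$, so $s$ does as well, and by periodicity $s$ has bounded variation on every finite interval. The only delicate point in this argument is the uniformity of the $o(1)$ terms in the ratio above, and this is supplied precisely by the uniform convergence of slowly varying functions on compact intervals (Proposition~\ref{propSVF1}, part 1), so I do not anticipate any real obstacle.
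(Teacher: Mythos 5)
Your proof is correct, and it takes a genuinely different route from the paper's. The paper works directly with the single-point limit $s(\ln(1/t))=t^{1/p+\varepsilon}\varrho_\varepsilon(\ln(1/t))$: it introduces an auxiliary parameter $\varepsilon>0$, invokes Proposition~\ref{propSVF1}, part~2 to make the numerator $e^{-kT/p-kT\varepsilon}\mathcal{N}(e^{-kT}t)$ nonincreasing and the denominator $(e^{kT}/t)^{-\varepsilon}\varphi(e^{kT}/t)$ nondecreasing in $t$, concludes that $\varrho_\varepsilon$ is monotone as a uniform limit of monotone functions, and finally lets $\varepsilon\to0$. You instead compare \emph{two} points $nT+r_1$, $nT+r_2$ along the same residue class, so that the slowly varying factors cancel in the ratio via Proposition~\ref{propSVF1}, part~1 and the inequality $N(nT+r_2)\geqslant N(nT+r_1)$ passes directly to the limit $e^{(r_2-r_1)/p}s(r_2)/s(r_1)\geqslant 1$. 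This sidesteps both the $\varepsilon$-regularization and the iterated limit, using only the uniform-convergence property of SVFs (part~1) rather than the monotonicity of $\tau^p\varphi(\tau)$ (part~2); the extension from $[0,T]$ to all of $\mathbb{R}$ via the scaling relation $\varrho(\tau+T)=e^{T/p}\varrho(\tau)$ is clean and closes the argument. The trade-off is that the paper's construction exhibits $\varrho$ explicitly as a limit of concrete monotone functions, while your argument verifies monotonicity of the already-defined $\varrho=e^{\tau/p}s$; for the purposes of this lemma your version is the more economical one.
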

\begin{proof}
The asymptotics could be transformed the following way:
$$
\dfrac{s(\ln(1/t))}{t^{1/p}} = \dfrac{\mathcal{N}(t)}{\varphi(1/t)}(1+\varepsilon(t)), \quad 
\varepsilon(t) \to 0 \text{ as } t\to +0.
$$
Replacing $t$ with $e^{-kT}t$ we obtain
$$
\dfrac{s(\ln(1/t))}{(e^{-kT}t)^{1/p}} = \dfrac{\mathcal{N}(e^{-kT}t)}{\varphi(e^{kT}/t)}(1+\varepsilon(e^{-kT}t)).
$$
Thus,
$$
\dfrac{s(\ln(1/t))}{t^{1/p}} = \lim_{k\to+\infty} e^{-\frac{kT}{p}}\dfrac{\mathcal{N}(e^{-kT}t)}{\varphi(e^{kT}/t)},
$$
and the convergence is uniform on $[1, e^T]$. Hence, for a fixed $\varepsilon > 0$ we obtain the relation
$$
s(\ln(1/t)) = t^{\frac{1}{p}+\varepsilon} \cdot
\lim_{k\to+\infty} \dfrac{e^{-\frac{kT}{p}-kT\varepsilon} \mathcal{N}(e^{-kT}t)}
{(e^{kT}/t)^{-\varepsilon}\varphi(e^{kT}/t)}.
$$
Note, that the numerator of the fraction decreases with $t$, and denominator increases with $t$ at large enough values of $k$, according to Proposition~\ref{propSVF1}, part 2. Denote
$$
\varrho_\varepsilon(\ln(1/t)) := \lim_{k\to+\infty} 
\dfrac{e^{-\frac{kT}{p}-kT\varepsilon} \mathcal{N}(e^{-kT}t)}
{(e^{kT}/t)^{-\varepsilon}\varphi(e^{kT}/t)}.
$$
As a uniform limit of monotonous functions, $\varrho_\varepsilon$ is monotonous. Function~$s$ has the form
$$
s(\tau) = e^{-(\frac{1}{p}+\varepsilon) \tau}\varrho_\varepsilon(\tau).
$$
Going to the limit as $\varepsilon\to 0$ and denoting $\varrho(\tau) := \lim\limits_{\varepsilon\to 0}\varrho_\varepsilon(\tau)$, we obtain
$$
s(\tau) = e^{-\tau/p} \varrho(\tau),
$$
where $\varrho$ is also a monotonous function.
\end{proof}

\begin{rusremark}\label{singmonot}
For some Green integral operators with singular arithmetically selfsimilar weight measures (see \cite{VladSheip,Vlad,Rast}) it is shown, that $\varrho(\tau)$ is a continuous purely singular function, i.e. its generalized derivative is a measure singular with respect to Lebesgue measure.

Below we assume, that all periodic functions arising in our asymptotics are continuous (thus satisfying all the requirements of \S 3). Also, according to Proposition~\ref{propSVF1}, part 3, we assume all SVFs to be $C^2$-smooth.
\end{rusremark}

\begin{rustheorem}\label{Th1}
Let operator $\mathcal{T}$ in a Hilbert space $\mathcal H$ have the spectral asymptotics \eqref{Tasymp}, and operator $\widetilde{\mathcal{T}}$ in a Hilbert space $\widetilde{\mathcal H}$ have the spectral asymptotics
$$
\widetilde{\mathcal{N}}(t) := \widetilde{\mathcal{N}}(t, \widetilde{\mathcal{T}}) = O(t^{-1/\widetilde{p}}), \quad t \to 0+,
\qquad \widetilde{p}>p.
$$
Then the operator $\mathcal{T}\otimes\widetilde{\mathcal{T}}$ in the Hilbert space $\mathcal{H}\otimes\widetilde{\mathcal{H}}$ has the asymptotics
\begin{equation}\label{Tasymp1}
\mathcal{N}_\otimes(t) := \mathcal{N}(t, \mathcal{T}\otimes\widetilde{\mathcal{T}}) \sim
\dfrac{\varphi(1/t)\cdot s^*(\ln(1/t))}{t^{1/p}}, \quad t\to+0, 
\end{equation}
where 
\begin{equation}\label{s_ast}
s^*(\tau) := \sum_{k} s(\tau + \ln(\widetilde{\lda}_k))\cdot\widetilde\lda_k^{1/p}
\end{equation}
is a periodic function with period $T$ {\rm(}the series converges, since $\widetilde{p}>p${\rm)}.
\end{rustheorem}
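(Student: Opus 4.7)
Since the eigenvalues of $\mathcal{T}\otimes\widetilde{\mathcal{T}}$ are the products $\lda_n\widetilde{\lda}_k$, the counting function decomposes as $\mathcal{N}_\otimes(t) = \sum_k \mathcal{N}(t/\widetilde{\lda}_k, \mathcal{T})$, with only finitely many nonzero terms for each $t>0$. My plan is to substitute~\eqref{Tasymp} into this sum, split at a cutoff $K$ independent of $t$, and control the tail via Potter's bound. As a preliminary, observe that $\widetilde{\mathcal{N}}(t)=O(t^{-1/\widetilde{p}})$ gives $\widetilde{\lda}_k=O(k^{-\widetilde{p}})$; since $\widetilde{p}/p>1$ the series $\sum_k\widetilde{\lda}_k^{1/p}$ converges, and together with boundedness of $s$ this gives absolute uniform convergence of~\eqref{s_ast}. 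Hence $s^*$ is continuous, $T$-periodic (inherited from $s$), and bounded away from zero (since $s$ is).

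Fix $\varepsilon>0$ and write $\mathcal{N}_\otimes(t) = \Sigma_K(t)+R_K(t)$, where $\Sigma_K$ collects the terms with $k\leq K$. The set $\{\widetilde{\lda}_k\colon k\leq K\}$ is finite and bounded away from $0$, so the asymptotics~\eqref{Tasymp} applies to each term with error uniform in $k\leq K$, and Proposition~\ref{propSVF1}, part~1 yields $\varphi(\widetilde{\lda}_k/t)/\varphi(1/t)\to 1$ uniformly in $k\leq K$. Hence
$$
\Sigma_K(t) = \frac{\varphi(1/t)}{t^{1/p}}\Bigl(\sum_{k\leq K} \widetilde{\lda}_k^{1/p}\, s(\ln(1/t)+\ln\widetilde{\lda}_k)\Bigr)(1+o(1)), \quad t\to 0+.
$$
Choosing $K$ so that the partial sum of~\eqref{s_ast} differs from $s^*(\tau)$ by less than $\varepsilon$ uniformly in $\tau$ (possible by uniform convergence of~\eqref{s_ast}), $\Sigma_K(t)$ contributes the main term $\varphi(1/t)s^*(\ln(1/t))/t^{1/p}$ up to a relative error of $O(\varepsilon)$.

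For the tail $R_K(t)$, combine the crude global bound $\mathcal{N}(u,\mathcal{T})\leq C\,u^{-1/p}\varphi(1/u)$ valid for $u\leq\lda_1(\mathcal{T})$ (and $\mathcal{N}(u,\mathcal{T})=0$ for $u\geq\lda_1(\mathcal{T})$, so only $k$ with $\widetilde{\lda}_k\geq t/\lda_1(\mathcal{T})$ contribute) with Potter's bound for SVFs: for any $\delta>0$ and $t$ sufficiently small, $\varphi(\widetilde{\lda}_k/t)\leq 2\varphi(1/t)\widetilde{\lda}_k^{-\delta}$. Choosing $\delta<1/p-1/\widetilde{p}$, possible since $\widetilde{p}>p$, the series $\sum_k\widetilde{\lda}_k^{1/p-\delta}$ still converges, so its tail beyond $K$ is less than $\varepsilon$ for $K$ large enough. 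Thus $R_K(t)\leq C\varepsilon\,\varphi(1/t)/t^{1/p}$, and combining with the estimate for $\Sigma_K$ and letting $\varepsilon\to 0$ yields~\eqref{Tasymp1}.

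The main obstacle is the dual uniformity required: uniformity of the asymptotic equivalence $\mathcal{N}(u,\mathcal{T})\sim\mathcal{N}_{as}(u)$ in the argument $u=t/\widetilde{\lda}_k$, and uniformity of the SVF ratio $\varphi(c\tau)/\varphi(\tau)\to 1$ in $c=\widetilde{\lda}_k$, neither of which holds on $k$-ranges growing with $t$. Fixing $K$ independently of $t$ separates the problem into a controlled finite head and a Potter-bounded tail; the essential ingredient is the summability of $\widetilde{\lda}_k^{1/p-\delta}$ for some $\delta>0$, which the hypothesis $\widetilde{p}>p$ provides with room to spare.
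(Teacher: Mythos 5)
Your proposal is correct and follows essentially the same route as the paper: decompose $\mathcal{N}_\otimes(t)=\sum_k\mathcal{N}(t/\widetilde\lda_k)$, use the monotonicity of $\tau^\delta\varphi(\tau)$ (Potter-type bound) to get a summable majorant $\widetilde\lda_k^{1/p-\delta}$ from $\widetilde p>p$, and pass to the limit. The paper phrases the final step as an application of Lebesgue's convergence theorem with that majorant, while you carry out the underlying $\varepsilon$-$K$ truncation by hand, but the key estimates and the role of $\widetilde p>p$ are identical.
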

\begin{proof}
Since the eigenvalues of a tensor product of operators are equal to the products of their eigenvalues, we have
$$
\mathcal{N}_\otimes(t) = \#\{k, j: \lda_k \widetilde\lda_j > t\} =
\sum_k\#\{j: \lda_j > t/\widetilde\lda_k \} = \sum_k \mathcal{N}(t/\widetilde\lda_k).
$$
Thus,
$$
\dfrac{t^{1/p}}{\varphi(1/t)}\sum_k \mathcal{N}(t/\widetilde{\lda}_k) =
\sum_k \bigg( 
\dfrac{(t/\widetilde{\lda}_k)^{1/p} \mathcal{N}(t/\widetilde{\lda}_k)}
{\varphi(\widetilde{\lda}_k/t) s(\ln(\widetilde{\lda}_k/t))} 
\bigg)
\bigg(
\dfrac{\varphi(\widetilde{\lda}_k/t)}{\varphi(1/t)}
\bigg)
s(\ln(\widetilde{\lda}_k/t)) 
\widetilde{\lda}_k^{1/p}.
$$
The first multiplier is uniformly bounded and tends to $1$ as $t\to 0+$ according to
\eqref{Tasymp}. The second multiplier also tends to $1$. Also, since for every $\varepsilon$
function $\tau^\varepsilon\varphi(\tau)$ increases when $\tau > \tau_0(\varepsilon)$ according to Proposition~\ref{propSVF1}, part 2, we have an estimate
$$
\dfrac{\lda^\varepsilon\varphi(\lda\tau)}{\varphi(\tau)} =
\dfrac{(\lda\tau)^\varepsilon\varphi(\lda\tau)}{\tau^\varepsilon\varphi(\tau)}
\leq 1 \quad \text{ for } \lda\tau > \tau_0(\varepsilon), \lda < 1.
$$
Thus, for every $\varepsilon > 0$ we have an estimate (uniformly for $t < 1$)
$$
\dfrac{\varphi(\widetilde\lda_k/t)}{\varphi(1/t)}\leq C(\varepsilon)\widetilde\lda_k^{-\varepsilon},
$$
hence
$$
\dfrac{\varphi(\widetilde\lda_k/t)}{\varphi(1/t)}\widetilde\lda_k^{1/p} \leq
C(\varepsilon)\cdot k^{-\widetilde p (1/p - \varepsilon)},
$$
which, for sufficiently small $\varepsilon$ (such that $\widetilde p (1/p - \varepsilon) > 1$), gives us an estimate sufficient to use Lebesgue's monotone convergence theorem. Going to limit we obtain \eqref{Tasymp1}.
\end{proof}
\begin{rusremark}
For an arbitrary function $s$ and an operator $\widetilde{\mathcal{T}}$ function 
$s^\ast(\tau)$, generally speaking, could degenerate into constant. We could, for example, demand $s(\tau)+s(\tau+T/2)=1$, $T = 2p\ln2$, and choose a finite-rank operator $\widetilde{\mathcal{T}}$ with three eigenvalues: $2^{p}$, $2^{p}$ and $2^{2p}$. Then
\begin{align*}
s^\ast(\tau)& = 
s(\tau+p\ln 2)\cdot 2 + s(\tau+p\ln 2)\cdot 2 + 
 s(\tau + 2p\ln 2) \cdot 2^2 \\
 &= 4(s(\tau)+s(\tau+T/2)) = \const.
\end{align*}

However, if $s(\tau) = \exp(-\tau/p)\varrho(\tau)$, where $\varrho(\tau)$ is non-decreasing purely singular function (like in Remark~\ref{singmonot}), then no linear combination of shifts will be constant. Moreover, we note, that in this case function  $s^\ast(\tau)$ also has the form
$$
s^\ast(\tau) = \exp(-\tau/p)\varrho^\ast(\tau), \quad \varrho^\ast(\tau) = \sum_k\varrho(\tau+\ln\widetilde\lda_k),
$$
 and $\varrho^\ast(\tau)$ is a purely singular function, since $\varrho(\tau)$ is monotonous.
\end{rusremark}

Now, let's consider the case, when operators have coinciding power exponents in their spectral asymptotics.

\begin{rustheorem}\label{ThNEstim}
Let operator $\mathcal{T}$ have the spectral asymptotics \eqref{Tasymp}, and operator $\widetilde{\mathcal{T}}$ have the asymptotics
\begin{equation}\label{Tasymp2}
\mathcal{N}(t, \widetilde{\mathcal{T}}) \sim \widetilde{\mathcal{N}}_{as}(t) := 
\dfrac{\widetilde{\varphi}(1/t)\cdot \widetilde{s}(\ln(1/t))}{t^{1/p}}, \qquad t\to+0.
\end{equation}
Here $\widetilde\varphi$ is a SVF, $\widetilde s$ has period $\widetilde T$.
Then for every $\varepsilon > 0$ the estimates
\begin{equation*}\label{NEstim}
\mathcal{N}_\otimes(t) \lessgtr \dfrac{\alpha_\pm(\varepsilon)}{t^{1/p}}\cdot \bigg[
S(t,\varepsilon)+\widetilde S(t, \varepsilon) +\!\!\!\!\!
\int\limits_{\alpha_\mp(\varepsilon)/\varepsilon}^{\varepsilon\tau}\!\!\!\!\!
\varphi\left(\frac{\tau}{\sigma}\right)\widetilde\varphi(\sigma) s\left(\ln\frac{\tau}{\sigma}\right)
\widetilde s(\ln\sigma)\dfrac{d(\widetilde{\varrho}(\ln \sigma))}{\widetilde{\varrho}(\ln \sigma)}
\bigg]
\end{equation*}
hold uniformly for $t>0$. Here the integral should be interpreted as a Lebesgue-Sieltjes integral, $\tau = \alpha_\pm(\varepsilon)/t$. When $\varepsilon\tau < a_\mp(\varepsilon)/\varepsilon$ the integral is assumed to be zero. Coefficients $\alpha_\pm(\varepsilon)\to 1$ as $\varepsilon\to 0$, and functions $S(t, \varepsilon)$, $\widetilde S(t, \varepsilon)$ have the following asymptotics as $t\to+0$:
$$
S(t, \varepsilon) \sim \varphi(1/t)\cdot\sum_{\widetilde\lda_k\geqslant\varepsilon}s(\ln(1/t) + \ln(\widetilde\lda_k))\widetilde\lda_k^{1/p},
$$
\begin{equation}\label{asymptildeS}
\widetilde S(t, \varepsilon) \sim \widetilde\varphi(1/t)\cdot \Big(
\sum_{\lda_k \geqslant \varepsilon} \widetilde s(\ln(\tau)+\ln(\lda_k))\lda_k^{1/p}+
\varphi(1/\varepsilon)s(\ln(1/\varepsilon))\widetilde s(\ln(\tau\varepsilon))
\Big).
\end{equation}
\end{rustheorem}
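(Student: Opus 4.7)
The starting identity is $\mathcal{N}_\otimes(t)=\sum_{k}\mathcal{N}(t/\widetilde\lda_k)=\sum_{j}\widetilde{\mathcal{N}}(t/\lda_j)$, coming from the fact that the spectrum of $\mathcal{T}\otimes\widetilde{\mathcal{T}}$ consists of the products $\lda_j\widetilde\lda_k$; both representations will be used. Given $\varepsilon>0$ small, the asymptotics \eqref{Tasymp} and \eqref{Tasymp2} yield constants $\alpha_\pm(\varepsilon)\to 1$ as $\varepsilon\to 0$ with
$$
\alpha_-(\varepsilon)\mathcal{N}_{as}(u)\leqslant\mathcal{N}(u)\leqslant\alpha_+(\varepsilon)\mathcal{N}_{as}(u)
$$
and the analogue for $\widetilde{\mathcal{N}}$ whenever $u\leqslant\varepsilon$; these are the constants that will ultimately appear in the claim. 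We then partition the counting set $\{(j,k):\lda_j\widetilde\lda_k>t\}$ by comparing $\lda_j$ and $\widetilde\lda_k$ with $\varepsilon$; for $t<\varepsilon^2$ a short inclusion–exclusion gives the exact identity
\begin{align*}
\mathcal{N}_\otimes(t) &= \sum_{\widetilde\lda_k\geqslant\varepsilon}\mathcal{N}(t/\widetilde\lda_k)+\sum_{\lda_j\geqslant\varepsilon}\widetilde{\mathcal{N}}(t/\lda_j) \\
&\quad+\sum_{t/\varepsilon<\widetilde\lda_k<\varepsilon}\mathcal{N}(t/\widetilde\lda_k)-\mathcal{N}(\varepsilon^-)\widetilde{\mathcal{N}}\bigl((t/\varepsilon)^-\bigr).
\end{align*}

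In the first two sums the arguments $t/\widetilde\lda_k$ and $t/\lda_j$ do not exceed $t/\varepsilon\leqslant\varepsilon$, so the marginal asymptotics apply directly; pulling out $t^{-1/p}$ produces $\alpha_\pm(\varepsilon)t^{-1/p}S(t,\varepsilon)$ and $\alpha_\pm(\varepsilon)t^{-1/p}\cdot(\text{main piece of }\widetilde S(t,\varepsilon))$ respectively. A direct calculation confirms that the subtracted boundary term $\mathcal{N}(\varepsilon^-)\widetilde{\mathcal{N}}((t/\varepsilon)^-)$ is asymptotic to $\mathcal{N}_{as}(\varepsilon)\widetilde{\mathcal{N}}_{as}(t/\varepsilon)$, which coincides with the ``extra'' summand $\widetilde\varphi(1/t)\varphi(1/\varepsilon)s(\ln(1/\varepsilon))\widetilde s(\ln(\tau\varepsilon))/t^{1/p}$ placed inside $\widetilde S(t,\varepsilon)$ in \eqref{asymptildeS}. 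For the upper bound we drop this subtraction (enlarging the right-hand side); for the lower bound we keep it and absorb the resulting excess into the factor $\alpha_-(\varepsilon)$.

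The main technical step is the middle sum, in which both $\widetilde\lda_k$ and $t/\widetilde\lda_k$ lie in $(t/\varepsilon,\varepsilon)$, hence in the asymptotic regime for both operators. Bounding $\mathcal{N}(t/\widetilde\lda_k)$ above and below by $\alpha_\pm(\varepsilon)\mathcal{N}_{as}(t/\widetilde\lda_k)$ and rewriting the sum as a Lebesgue--Stieltjes integral against $-d\widetilde{\mathcal{N}}(u)$, we then pass from $\widetilde{\mathcal{N}}$ to $\widetilde{\mathcal{N}}_{as}$ by Riemann--Stieltjes integration by parts, paying another $\alpha_\pm(\varepsilon)$ factor and a slight perturbation of the integration range --- this is precisely what produces the stated limits $\alpha_\mp(\varepsilon)/\varepsilon$ and $\varepsilon\tau$ with $\tau=\alpha_\pm(\varepsilon)/t$. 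The substitution $\sigma=1/u$, together with Proposition~\ref{propSVF1}.3 used to discard the SVF derivative $\sigma\widetilde\varphi'(\sigma)/\widetilde\varphi(\sigma)=o(1)$, yields
$$
-d\widetilde{\mathcal{N}}_{as}(u)=\widetilde\varphi(\sigma)\,d\widetilde\varrho(\ln\sigma)\cdot(1+o(1));
$$
combined with the identities $\mathcal{N}_{as}(t\sigma)=\tau^{1/p}\sigma^{-1/p}\varphi(\tau/\sigma)s(\ln(\tau/\sigma))$ and $\widetilde s(\ln\sigma)/\widetilde\varrho(\ln\sigma)=\sigma^{-1/p}$, this reproduces the integrand in the theorem verbatim, with the prefactor $\tau^{1/p}=\alpha_\pm^{1/p}(\varepsilon)/t^{1/p}$ extracted in front. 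The main obstacle will be the uniform bookkeeping: collapsing the multiplicative errors arising from (i) the three asymptotic replacements of $\mathcal{N}$, $\widetilde{\mathcal{N}}$, (ii) the integration by parts inside the Stieltjes integral, and (iii) the discarded slowly varying derivative, into a single pair of constants $\alpha_\pm(\varepsilon)\to 1$ while keeping the estimate valid uniformly in $t>0$.
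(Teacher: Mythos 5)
Your decomposition is genuinely different from the paper's. The paper uses only the single representation $\mathcal{N}_\otimes(t)=\sum_k\mathcal{N}(t/\widetilde\lda_k)$, splits at $\widetilde\lda_k\gtrless\varepsilon$, and handles the tail $\sum_{\widetilde\lda_k<\varepsilon}$ by introducing the inverse function $\widetilde\mu$ of $\widetilde{\mathcal{N}}_{as}$, using the monotonicity of $\mathcal{N}$ to replace the sum over $k$ with an integral against $-d\widetilde{\mathcal{N}}_{as}(\mu)$; a substitution $\sigma=1/\mu$ then splits that integral at $\sigma=\varepsilon\tau$, producing both $\widetilde S$ and the main integral from the same tail. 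Your route instead uses both $\sum_k\mathcal{N}(t/\widetilde\lda_k)$ and $\sum_j\widetilde{\mathcal{N}}(t/\lda_j)$ and a (correct) inclusion--exclusion over $\lda_j\gtrless\varepsilon$, $\widetilde\lda_k\gtrless\varepsilon$, which produces $S$, the sum part of $\widetilde S$, and the middle sum immediately, plus a separate boundary product $\mathcal{N}(\varepsilon^-)\widetilde{\mathcal{N}}((t/\varepsilon)^-)$. What you buy is that $\widetilde S$ appears symmetrically rather than emerging after the change of variables, and the middle sum is naturally confined to the regime where both marginal asymptotics apply; what you pay for is that your middle sum is a Stieltjes integral against the genuine counting measure $-d\widetilde{\mathcal{N}}$, so you still need one integration by parts (and back) to swap to $-d\widetilde{\mathcal{N}}_{as}$, whereas the paper arrives at $-d\widetilde{\mathcal{N}}_{as}$ directly through $\widetilde\mu$.

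Two points you should tighten in a full write-up. First, the swap $\widetilde{\mathcal{N}}\to\alpha_\pm\widetilde{\mathcal{N}}_{as}$ under the Stieltjes integral is legitimate only because you can integrate by parts against the monotone function $u\mapsto\mathcal{N}_{as}(t/u)$, apply $\widetilde{\mathcal{N}}\lessgtr\alpha_\mp\widetilde{\mathcal{N}}_{as}$ (valid on the whole range $u\in(t/\varepsilon,\varepsilon)$, since then $u<\varepsilon$) to the resulting nonnegative-measure integral, and then integrate back; state this explicitly, since ``$\widetilde{\mathcal{N}}\sim\widetilde{\mathcal{N}}_{as}$'' alone does not let you compare measures. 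Second, your boundary product enters with a minus sign (this is what the inclusion--exclusion forces, and it also matches what integration by parts of $\int_\varepsilon^\infty\mathcal{N}(s)\,d(s^{1/p}\widetilde s(\ln(\tau s)))$ produces in the paper's route), whereas \eqref{asymptildeS} as printed has a plus; absorbing a quantity of order $\widetilde\varphi(1/t)\varphi(1/\varepsilon)$ into $\alpha_-(\varepsilon)$ as you suggest does not work because it is additive on the same scale as the main part of $\widetilde S$, not a multiplicative $1+o(1)$ perturbation of the whole bracket. This discrepancy is harmless for the downstream Theorems 3--7 (there the whole $\widetilde S$ is either $o(\phi(1/t))$ or its correction is $O(\varphi(1/\varepsilon))\to 0$), but you should either carry the minus sign explicitly or flag the sign in \eqref{asymptildeS} as a likely typographical slip rather than claim to absorb it.
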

\begin{proof}
The proof follows the scheme of Theorem 3.3 from \cite{NazNikKar}.
Let's prove the upper estimate, the lower estimate could be obtained similarly.
$$
t^{1/p}\mathcal{N}_\otimes(t) = t^{1/p}\sum_{\widetilde\lda_k < \varepsilon}
\mathcal{N}(t/\widetilde\lda_k) + S(t, \varepsilon),
$$
where
$$
S(t, \varepsilon) = t^{1/p}\sum_{\widetilde\lda_k\geqslant \varepsilon}\mathcal{N}(t/\widetilde\lda_k).
$$
The asymptotics for $S(t, \varepsilon)$ could be obtained from Theorem~\ref{Th1} for a finite-rank operator $\widetilde{\mathcal{T}}$.

Denote by $\widetilde \mu$ the inverse function to $\widetilde{\mathcal{N}}_{as}$. Then $\widetilde\lda_k/\widetilde\mu(k)\to 1$ as $k\to\infty$, thus
$$
\alpha_-(\varepsilon)\widetilde\mu(k) \leq \widetilde\lda_k \leq
\alpha_+(\varepsilon)\widetilde\mu(k) \quad\text{ for } \widetilde\lda_k < \varepsilon
$$
for certain values of $\alpha_\pm(\varepsilon)$, such that $\alpha_\pm(\varepsilon)\to 1$ as $\varepsilon\to 0$.

Function $\mathcal{N}$ is monotonous, which implies
$$
\sum_{\widetilde\lda_k < \varepsilon}
\mathcal{N}(t/\widetilde\lda_k) \leq
\sum_{\widetilde\mu(k) < \alpha_-^{-1}(\varepsilon)\varepsilon}
\mathcal{N}\Big(\frac{t}{\alpha_+(\varepsilon)\widetilde\mu(k)}\Big).
$$
Function $k\mapsto \mathcal{N}\big(\frac{t}{\alpha_+(\varepsilon)\widetilde\mu(k)}\big)$ is also monotonous, so we obtain
\begin{equation}\label{sumintest}
t^{1/p}\!\sum_{\widetilde\lda_k < \varepsilon}\!
\mathcal{N}(t/\widetilde\lda_k)\! \leq\! 
t^{1/p}\mathcal{N}\Big(\dfrac{\alpha_-(\varepsilon)t}{\alpha_+(\varepsilon)\varepsilon}\Big)
\!\!+t^{1/p}\!\!\!\int\limits_0^{\varepsilon\alpha_-^{-1}(\varepsilon)}\!\!\!
\mathcal{N}\Big(\frac{t}{\alpha_+(\varepsilon)\mu}\Big)
(-d\widetilde{\mathcal{N}}_{as}(\mu)).
\end{equation}
The first term could be estimated as $O(\varepsilon^{1/p}\varphi(1/t))$, thus, adding it to the term $S(t, \varepsilon)$, we obtain $\alpha_+(\varepsilon)S(t, \varepsilon)$. Further, considering $-d\widetilde{\mathcal{N}}_{as}(\mu)$ as a Lebesgue-Stieltjes measure, we obtain
\begin{equation}\label{Nas_measure}
-d\widetilde{\mathcal{N}}_{as}(\mu) = \dfrac{1}{\mu}\widetilde{\varphi}(1/\mu)\widetilde{\varrho}(\ln(1/\mu))
\Big(
\dfrac{-\mu d(\widetilde{\varrho}(\ln(1/\mu)))}{\widetilde{\varrho}(\ln(1/\mu))} +
\dfrac{\widetilde\varphi\,'(1/\mu)}{\mu\widetilde\varphi(1/\mu)} d\mu
\Big).
\end{equation}
The density of the second term tends to zero as $\mu\to 0$, while the first term
$$
\dfrac{-\mu d(\widetilde{\varrho}(\ln(1/\mu)))}{\widetilde{\varrho}(\ln(1/\mu))} = \dfrac{d\widetilde{\varrho}(\ln(1/\mu))}{\widetilde{\varrho}(\ln(1/\mu))} = d(\ln(\widetilde{\varrho}(\lda)))
$$ 
is a positive periodic measure (here $\lda = \ln(1/\mu)$), since
$$
\ln(\varrho(\tau+T)) = \ln(\varrho(\tau)) + \dfrac{T}{p}.
$$ 
Hence, for sufficiently small $\varepsilon$ the contribution of the second term of \eqref{Nas_measure} into the integral in \eqref{sumintest} is negligible, and this integral could be estimated as
$$
\alpha_+(\varepsilon)t^{1/p}\int\limits_0^{\varepsilon\alpha_-^{-1}(\varepsilon)}
\mathcal{N}\Big(\frac{t}{\alpha_+(\varepsilon)\mu}\Big)
 \widetilde\varphi(1/\mu)\big(-d(\widetilde{\varrho}(\ln(1/\mu)))\big).
$$
Splitting the integral into two parts and integrating by substitution, we obtain the estimate
$$
\alpha_+(\varepsilon)t^{1/p}\int\limits_\varepsilon^{+\infty}
\mathcal{N}(s)\widetilde\varphi(\tau s)d(\widetilde{\varrho}(\ln(\tau s))) +\alpha_+(\varepsilon)t^{1/p}\int\limits_{\alpha_-(\varepsilon)/\varepsilon}^{\varepsilon\tau}
\mathcal{N}(\sigma/\tau)\widetilde\varphi(\sigma)d(\widetilde{\varrho}(\ln\sigma)).
$$
Substituting in the second integral $\mathcal{N}$ with $\alpha_+(\varepsilon)\mathcal{N}_{as}$, we obtain exactly the third term of the estimate we are proving. The first integral gives us the term $\widetilde S(t, \varepsilon)$. Further,
$$
\dfrac{\widetilde\varphi(\alpha_+(\varepsilon)s/t)}{\widetilde\varphi(1/t)} \to 1 \quad
\text{ as } t\to 0
$$
uniformly for $s\in[\varepsilon, \lda_1(\mathcal{T})]$. Thus, 
$$
\widetilde S(t, \varepsilon) \sim \widetilde\varphi(1/t)\int\limits_\varepsilon^{+\infty}
\mathcal{N}(s)d(\widetilde s(\ln(\tau s)) s^{1/p}).
$$
It is clear, that $\mathcal{N}(s) = 0$ for $s>\lda_1(\mathcal{T})$. Integrating by parts, we obtain the asymptotics \eqref{asymptildeS}.
\end{proof}

In Theorems 3--5 we assume, that
\begin{equation}\label{phiinfinf}
\int\limits_1^{\infty}\varphi(\tau)\dfrac{d\tau}{\tau} = 
\int\limits_1^{\infty}\widetilde\varphi(\tau)\dfrac{d\tau}{\tau} = \infty.
\end{equation}

\begin{rustheorem}\label{Thinfinf}
Let operators $\mathcal{T}$ and $\widetilde{\mathcal{T}}$ satisfy the conditions of Theorem~{\rm\ref{ThNEstim}}.
Suppose, also, that condition \eqref{phiinfinf} holds, and the periods of $s$ and $\widetilde s$ coincide and equal $T$. Then
$$
\mathcal{N}_\otimes(t) \sim \dfrac{\phi(1/t)\cdot s_\otimes(\ln(1/t))}{t^{1/p}}, \quad t\to +0,
$$
where $\phi(s) := (\varphi \ast \widetilde\varphi)(s)$ is a SVF, 
\begin{equation}\label{sotimesdef}
s_\otimes(\eta) = \dfrac{(s \star \widetilde{s})(\eta)}{p} + (s \star \widetilde{s})'(\eta) = e^{-\eta/p}\dfrac{1}{T}\int\limits_0^T \varrho(\eta - \sigma)\,d\widetilde{\varrho}(\sigma)
\end{equation} is a continuous positive $T$-periodic function.
\end{rustheorem}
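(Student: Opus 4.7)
The plan is to combine the two-sided estimate of Theorem~\ref{ThNEstim} with the direct evaluation of the almost Mellin convolution provided by Lemma~\ref{raznos}. First, under assumption~\eqref{phiinfinf}, Proposition~\ref{propSVF2}, part 1, applied in both directions gives $\varphi(\tau)=o(h_{\widetilde\varphi,\varphi}(\tau))$ and $\widetilde\varphi(\tau)=o(h_{\varphi,\widetilde\varphi}(\tau))$, whence both $\varphi(\tau)$ and $\widetilde\varphi(\tau)$ are $o(\phi(\tau))$ with $\phi=\varphi\ast\widetilde\varphi=h_{\varphi,\widetilde\varphi}+h_{\widetilde\varphi,\varphi}$. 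Moreover $\phi$ is an SVF by Proposition~\ref{propSVF2}, part 3, and the integral representation in \eqref{sotimesdef} shows $s_\otimes$ is continuous, $T$-periodic, and bounded away from zero: indeed $\varrho$ and $\widetilde\varrho$ are positive and non-decreasing (see the proof of Lemma~\ref{ogrVar}), so $d\widetilde\varrho$ is a positive measure and the integrand is strictly positive.

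Second, I would verify that the boundary terms $S(t,\varepsilon)$ and $\widetilde S(t,\varepsilon)$ in Theorem~\ref{ThNEstim} are negligible. Their asymptotic expressions involve only finitely many eigenvalues plus, in the case of $\widetilde S$, a single boundary correction, so they are $O(\varphi(1/t))$ and $O(\widetilde\varphi(1/t))$ respectively, with constants depending only on $\varepsilon$. By the first paragraph, both are $o(\phi(1/t))$, hence $o(\phi(1/t)\,s_\otimes(\ln(1/t)))$.

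Third, for each fixed $\varepsilon>0$ I would show that the truncated integral in Theorem~\ref{ThNEstim} is asymptotic to the full almost Mellin convolution $(\varphi s\ast\widetilde\varphi\widetilde s)(\tau)=\int_1^\tau$. The difference consists of two edge pieces, $\int_1^{\alpha_\mp/\varepsilon}$ and $\int_{\varepsilon\tau}^\tau$. For the first, uniform slow variation of $\varphi$ on compact sets (Proposition~\ref{propSVF1}, part 1), boundedness of $s$, $\widetilde s$ and $\widetilde\varphi$ on $[1,\alpha_\mp/\varepsilon]$, and finiteness of the periodic measure $d\widetilde\varrho(\ln\sigma)/\widetilde\varrho(\ln\sigma)$ on this interval yield a bound of order $C(\varepsilon)\varphi(\tau)$. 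After the substitution $\sigma\mapsto\tau/\sigma$, the second piece is bounded analogously by $C(\varepsilon)\widetilde\varphi(\tau)$. Both are $o(\phi(\tau))$, so the truncated integral agrees with the full convolution up to lower order; by Lemma~\ref{raznos} the latter satisfies $(\varphi s\ast\widetilde\varphi\widetilde s)(\tau)\sim\phi(\tau)s_\otimes(\ln\tau)$.

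Finally, assembling the pieces with $\tau=\alpha_\pm(\varepsilon)/t$, and using slow variation of $\phi$ together with uniform continuity of the positive periodic function $s_\otimes$, the upper bound of Theorem~\ref{ThNEstim} yields
\[
\limsup_{t\to 0}\frac{t^{1/p}\mathcal{N}_\otimes(t)}{\phi(1/t)\,s_\otimes(\ln(1/t))}\le \alpha_+(\varepsilon)\sup_{\eta\in\mathbb{R}}\frac{s_\otimes(\eta+\ln\alpha_+(\varepsilon))}{s_\otimes(\eta)},
\]
and the lower bound is symmetric. Since $\alpha_\pm(\varepsilon)\to 1$, letting $\varepsilon\to 0$ drives both ratios to $1$, proving the theorem. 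I expect the main delicacy to be the third step: controlling the two edge pieces uniformly despite the periodic Lebesgue--Stieltjes measure $d\widetilde\varrho/\widetilde\varrho$, rather than the purely Lebesgue setting of \cite{NazNikKar}. Once the SVF domination $\varphi,\widetilde\varphi=o(\phi)$ is in hand, everything else reduces to routine applications of the slowly varying and periodic machinery developed in \S 2--3.
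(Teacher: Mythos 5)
Your proposal is correct and follows the paper's own argument step by step: both use the two-sided estimate of Theorem~\ref{ThNEstim}, show that $S$, $\widetilde S$ and the two edge pieces of the truncated integral are $o(\phi(1/t))$ via Proposition~\ref{propSVF2}, part~1, then apply Lemma~\ref{raznos} to evaluate the full almost Mellin convolution and conclude via uniform continuity of $s_\otimes$ and $\alpha_\pm(\varepsilon)\to1$. The only cosmetic difference is that you bound the edge integrals directly by $C(\varepsilon)\varphi(\tau)$ and $C(\varepsilon)\widetilde\varphi(\tau)$, whereas the paper extracts the leading SVF factor through an asymptotic equivalence before invoking the same proposition.
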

\begin{proof}
Fix $\varepsilon>0$ and consider the estimate obtained in Theorem~\ref{ThNEstim}.
According to Proposition~\ref{propSVF2}, part 1, we have
$$
S(t, \varepsilon) = o(\phi(1/t)), \quad
\widetilde S(t, \varepsilon) = o(\phi(1/t)), \qquad t\to +0.
$$
Further, we can extend the integration interval, since, considering $\tau = \alpha_{\pm}(\varepsilon)/t$ and using Proposition~\ref{propSVF2}, part 1, we have
\begin{multline*}
\int\limits_{\varepsilon\tau}^{\tau}
\varphi\left(\frac{\tau}{\sigma}\right)\widetilde\varphi(\sigma) s\left(\ln\frac{\tau}{\sigma}\right)
\widetilde s(\ln\sigma)\dfrac{d(\widetilde{\varrho}(\ln \sigma))}{\widetilde{\varrho}(\ln \sigma)} 
\\
\sim \widetilde \varphi(\tau)\int\limits_1^{1/\varepsilon}\varphi(\sigma)
 s(\ln\sigma)
\widetilde s(\ln(\tau/\sigma))\dfrac{d(\widetilde{\varrho}(\ln (\tau/\sigma)))}{\widetilde{\varrho}(\ln (\tau/\sigma))} = o(\phi(1/t)), \quad t\to+0,
\end{multline*}
\begin{multline*}
\int\limits_{1}^{\alpha_\mp(\varepsilon)/\varepsilon}
\varphi\left(\frac{\tau}{\sigma}\right)\widetilde\varphi(\sigma) s\left(\ln\frac{\tau}{\sigma}\right)
\widetilde s(\ln\sigma)\dfrac{d(\widetilde{\varrho}(\ln \sigma))}{\widetilde{\varrho}(\ln \sigma)} 
\\
\sim \varphi(\tau)\int\limits_1^{\alpha_\mp(\varepsilon)/\varepsilon}\widetilde\varphi(\sigma)
 s\left(\ln\frac{\tau}{\sigma}\right)
\widetilde s(\ln\sigma)\dfrac{d(\widetilde{\varrho}(\ln \sigma))}{\widetilde{\varrho}(\ln \sigma)} = o(\phi(1/t)), \quad t\to+0.
\end{multline*}
Thus
\begin{equation}\label{NisMellin}
\mathcal{N}_\otimes(t) \lessgtr \dfrac{\alpha_\pm(\varepsilon)}{t^{1/p}} (\varphi s \ast \widetilde \varphi \widetilde s)(\tau)(1+o(1)).
\end{equation}
Using Lemma~\ref{raznos}, we obtain 
$$
\mathcal{N}_\otimes(t) \lessgtr \alpha_\pm(\varepsilon) \dfrac{\phi(\tau)}{t^{1/p}}\Big( 
\dfrac{(s \star \widetilde{s})}{p} + (s \star \widetilde{s})'
\Big)(\ln(\tau))(1+o(1)), \quad t\to+0.
$$
Note, also, that $\phi(\tau) = \phi(1/t)(1+o(1))$ as $t\to+0$.
Hence
\begin{align}
\begin{split}\label{limsupliminf}
\limsup_{t\to+0}\mathcal{N}_\otimes(t) \Big( \dfrac{\phi(1/t)\cdot s_\otimes(\ln(1/t))}{t^{1/p}}\Big)^{-1}\!\!\! &\leq \alpha_+(\varepsilon) \cdot
\!\!\!\sup\limits_{t\in[1, e^T]}\dfrac{s_\otimes(\ln(\alpha_+(\varepsilon))\!+\ln(1/t))}{s_\otimes(\ln(1/t))}, 
\\
\liminf_{t\to+0}\mathcal{N}_\otimes(t) \Big( \dfrac{\phi(1/t)\cdot s_\otimes(\ln(1/t))}{t^{1/p}}\Big)^{-1} \!\!\!&\geq \alpha_-(\varepsilon)\cdot
\!\!\!\inf\limits_{t\in[1, e^T]}\dfrac{s_\otimes(\ln(\alpha_-(\varepsilon))\!+\ln(1/t))}{s_\otimes(\ln(1/t))}.
\end{split}
\end{align}
Function $s_\otimes$ is uniformly continuous on a segment, thus, supremum and infium in the right parts of \eqref{limsupliminf} tend to $1$ as $\varepsilon \to +0$. Going to the limit as $\varepsilon\to+0$ we conclude the proof.
\end{proof}
\begin{rusremark}
The question of non-constancy of $s_\otimes$ remains open. Even if we assume $s(\tau) = \exp(-\tau/p)\varrho(\tau)$, $\widetilde s(\tau) =
\exp(-\tau/p)\widetilde\varrho(\tau)$, and functions $\varrho$ and $\widetilde\varrho$ are purely singular, we have $s_\otimes(\tau) = \exp(-\tau/p)\varrho_\otimes(\tau)$, where
$$
\varrho_\otimes(\tau) = \dfrac{1}{T}\int\limits_0^T \varrho(\tau-\lda)d\widetilde\varrho(\lda).
$$
It is clear, that $\varrho_\otimes'=\varrho'\star\widetilde\varrho\,'$ is a convolution of singular measures. However, the convolution of singular measures often turns out to be absolutely continuous (see e.g.~\cite{DamanikGorodetskiSolomyak}).
\end{rusremark}

\begin{rustheorem}\label{noncommes}
Let operators $\mathcal{T}$ and $\widetilde{\mathcal{T}}$ satisfy the conditions of Theorem~{\rm \ref{ThNEstim}}.
Suppose, also, that condition \eqref{phiinfinf} holds, and and functions $s$ and $\widetilde s$ have no common period, i.e. their periods $T$ and $\widetilde T$ are incommensurable. Then
$$
\mathcal{N}_\otimes(t) \sim \dfrac{\psi(1/t)\phi(1/t)}{t^{1/p}}, \quad t\to +0,
$$
where $\phi(s) = (\varphi \ast \widetilde\varphi)(s)$, $\psi(t)$ is a certain bounded and separated from zero SVF.
\end{rustheorem}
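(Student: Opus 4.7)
The plan is to mimic the proof of Theorem~\ref{Thinfinf}, substituting Lemma~\ref{MellinConv3} for Lemma~\ref{raznos} at the averaging step. I would first apply Theorem~\ref{ThNEstim} and repeat the opening reductions from the proof of Theorem~\ref{Thinfinf}: using Proposition~\ref{propSVF2} part~1 and the divergence hypothesis~\eqref{phiinfinf}, the auxiliary terms $S(t,\varepsilon)$ and $\widetilde S(t,\varepsilon)$ are $o(\phi(1/t))$, and the integrals over $[1,\alpha_\mp(\varepsilon)/\varepsilon]$ and $[\varepsilon\tau,\tau]$ are similarly $o(\phi(1/t))$. With $\tau=\alpha_\pm(\varepsilon)/t$ the problem then reduces to analyzing the full almost Mellin convolution $I(\tau):=(\varphi s \ast \widetilde\varphi\widetilde s)(\tau)$.

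Next, I would set $\psi(\tau):=I(\tau)/\phi(\tau)$. That $\psi$ is bounded and separated from zero is precisely the content of Lemma~\ref{MellinConv1}. For the slow variation property, my plan is to prove the sharper statement $I(\tau)\sim\Cspec\,\phi(\tau)$ as $\tau\to\infty$, where $\Cspec$ is the constant from~\eqref{Cdef}; this makes $\psi$ asymptotically the positive constant $\Cspec$, and hence trivially a SVF with the required properties. To obtain this asymptotic I would partition $[1,\tau]$ into three regions: the intervals $[1,M]$ and $[\tau/M,\tau]$ for a large auxiliary $M$ each contribute at most $O(\varphi(\tau))+O(\widetilde\varphi(\tau))=o(\phi(\tau))$ by Proposition~\ref{propSVF2} part~1, while the bulk $[M,\tau/M]$ is further partitioned into logarithmic blocks of length $L$. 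Proposition~\ref{propSVF1} part~1 makes the factor $\varphi(\tau/\sigma)\widetilde\varphi(\sigma)$ approximately constant on each block, and the integral of the periodic factor against the Stieltjes measure on a block equals $\Cspec L(1+o(1))$, obtained by telescoping two applications of Lemma~\ref{MellinConv3} taken with $\omega=\tau$. Summation over blocks produces a Riemann sum approximating $\Cspec\int_M^{\tau/M}\varphi(\tau/\sigma)\widetilde\varphi(\sigma)\,d\sigma/\sigma=\Cspec\phi(\tau)(1+o(1))$.

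The main obstacle, which I expect to require the most care, is the calibration of $L$ against $M$ and $\tau$: for small $L$ the per-block periodic integral is swamped by the $o(\ln\tau)$ error from Lemma~\ref{MellinConv3}, whereas for large $L$ the SVF factor is no longer approximately constant on a block. I would address this by a two-parameter sandwich scheme analogous to the one closing the proof of Theorem~\ref{Thinfinf}: send $\tau\to\infty$ first with $L$ and $M$ fixed, extract two-sided bounds for $\psi(\tau)$ involving only the SVF oscillation over multiplicative intervals of size $e^L$, and then let $M$ and $L$ tend to infinity in the appropriate order. The symmetry $(\varphi s\ast\widetilde\varphi\widetilde s)(\tau)\sim(\widetilde\varphi\widetilde s\ast\varphi s)(\tau)$ afforded by Lemma~\ref{MellinConv2} lets me focus on the $H$-half of the convolution and recover $H_1$ by the same argument.
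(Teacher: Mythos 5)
The proposal's opening reductions (applying Theorem~\ref{ThNEstim}, discarding $S,\widetilde S$ and the edge integrals, reducing to $(\varphi s \ast \widetilde\varphi\widetilde s)(\tau)$, defining $\psi$ via the ratio to $\phi$) match the paper exactly, and citing Lemma~\ref{MellinConv1} for boundedness and separation from zero is correct. The remainder, however, has a genuine gap.

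You propose to prove the \emph{sharper} statement $(\varphi s \ast \widetilde\varphi\widetilde s)(\tau)\sim\Cspec\,\phi(\tau)$, i.e.\ that $\psi$ converges to a constant. This is strictly stronger than what Theorem~\ref{noncommes} asserts, and the paper does \emph{not} know it in this generality: Theorem~\ref{noncommes1} proves it only under the additional hypotheses~\eqref{addconstrict}, and the remark following that theorem states explicitly that the constancy of $\psi$ in Theorem~\ref{noncommes} is an open question in general. Moreover, the ``calibration'' obstacle you flag is not a technical nuisance to be pushed through but the essential obstruction. Lemma~\ref{MellinConv3} only gives an $o(\ln\tau)$ error with no rate --- it comes from Oxtoby's ergodic theorem and can decay arbitrarily slowly depending on the Diophantine type of $T/\widetilde T$. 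Telescoping two such estimates over a block $[\sigma_1,\sigma_2]$ with $\ln\sigma_2-\ln\sigma_1=L$ produces an error of size $o(\ln\sigma_2)$, which dominates $\Cspec L$ unless $L$ grows comparably to $\ln\tau$; but once $L$ grows, the SVF factor $\varphi(\tau/\sigma)\widetilde\varphi(\sigma)$ is no longer approximately constant on a block for a general SVF. No choice of $L(\tau)$ threads this needle without further hypotheses on $\varphi,\widetilde\varphi$ --- which is precisely why the paper introduces~\eqref{addconstrict} and a completely different technique (integration by parts, which distributes the ergodic error against the SVF's log-derivative) to obtain the constant in Theorem~\ref{noncommes1}.

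The paper's actual proof of Theorem~\ref{noncommes} sidesteps the need to identify a limit at all. Setting $r(\ln\tau) := (\varphi s \ast \widetilde\varphi\widetilde s)(\tau)/\phi(\tau)$, it (a) establishes that $r$ is uniformly continuous by controlling the variation of $\phi$, of the integrand, and of the integration range under a multiplicative shift $e^\delta$; (b) uses the $T$-periodicity of $s$ (and separately the $\widetilde T$-periodicity of $\widetilde s$) together with the slow variation of $\varphi,\widetilde\varphi$ to show $r(\ln\tau+T)-r(\ln\tau)\to 0$ and $r(\ln\tau+\widetilde T)-r(\ln\tau)\to 0$; and then (c) invokes incommensurability to get density of $\{z_1T+z_2\widetilde T : z_1,z_2\in\mathbb{Z}\}$, so that uniform continuity upgrades these two limits to $r(\ln\tau+c)-r(\ln\tau)\to 0$ for every $c\in\mathbb{R}$, which is precisely the SVF property for $\psi(\tau)=r(\ln\tau)$. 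Notice Lemma~\ref{MellinConv3} plays no role here; it is reserved for Theorem~\ref{noncommes1}. Your proposal should be revised to aim for slow variation of $\psi$ directly rather than convergence to $\Cspec$.
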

\begin{proof}
We repeat the proof of Theorem \ref{Thinfinf} until we obtain the relation~\eqref{NisMellin}. Further, we obtain an estimate, that we could use instead of Lemma~\ref{raznos}.

We introduce a function $r(\tau)$, defined according to the relation
$$
(\varphi s \ast \widetilde \varphi \widetilde s)(\tau) = 
\phi(\tau) r(\ln\tau).
$$
Function $r$ is bounded and separated from zero according to Lemma~\ref{MellinConv1}. We need to prove, that it is uniformly continuous. We have
\begin{multline*}
r(\ln\tau + \delta) - r(\ln\tau) = r(\ln\tau+\delta) \Big(\dfrac{\phi(\tau e^\delta)}{\phi(\tau)}  - 1\Big)
\\ 
+ \dfrac{1}{\phi(\tau)}\cdot \int\limits_1^\tau \left(\dfrac{\varphi(\frac{\tau e^\delta}{\sigma}) s(\ln \frac{\tau e^\delta}{\sigma})}{\varphi\left(\frac{\tau}{\sigma}\right) s\left(\ln\frac{\tau}{\sigma}\right)} - 1\right) \varphi\left(\frac{\tau}{\sigma}\right) s\left(\ln\frac{\tau}{\sigma}\right)
\widetilde \varphi(\sigma) \widetilde s(\ln\sigma)\dfrac{d(\widetilde{\varrho}(\ln \sigma))}{\widetilde{\varrho}(\ln \sigma)}
\\
 + \dfrac{1}{\phi(\tau)}\cdot \int\limits_\tau^{\tau e^\delta} \varphi\left(\frac{\tau}{\sigma}\right) s\left(\ln\frac{\tau}{\sigma}\right)
\widetilde \varphi(\sigma) \widetilde s(\ln\sigma)\dfrac{d(\widetilde{\varrho}(\ln \sigma))}{\widetilde{\varrho}(\ln \sigma)}.
\end{multline*}
Let's show, that each term here tends to zero as $\delta\to 0$ uniformly with respect to $\tau$. Without loss of generality we assume, that $0<\delta\leq\delta_0$ for a certain value of ~$\delta_0$. 
For the first term we use the mean value theorem:
$$
\dfrac{\phi(\tau e^\delta) - \phi(\tau)}{\phi(\tau)} =
(\tau e^\delta-\tau) \dfrac{\phi'(\zeta)}{\phi(\tau)} = (e^\delta-1) \cdot \dfrac{\tau}{\zeta}\cdot\dfrac{\phi(\zeta)}{\phi(\tau)}\cdot\dfrac{\zeta\phi'(\zeta)}{\phi(\zeta)},
$$
where $\zeta \in [\tau, \tau e^\delta]$. Multiplier $\dfrac{\tau}{\zeta}$ is bounded. 
For the last two multipliers there exist the limits
$$
\dfrac{\phi(\zeta)}{\phi(\tau)} \to 1, \quad \dfrac{\zeta\phi'(\zeta)}{\phi(\zeta)} \to 0, \quad \tau\to\infty,
$$
which means that they are also bounded. Thus,
$$
\left| \dfrac{\phi(\tau e^\delta)}{\phi(\tau)}  - 1 \right| \leq
C (e^\delta - 1) \to 0, \quad \delta\to 0
$$
uniformly for $\tau \in \mathbb{R}_+$.

Similarly, it is possible to show, that in the second term the expression 
$$
\dfrac{\varphi(\frac{\tau e^\delta}{\sigma}) s(\ln \frac{\tau e^\delta}{\sigma})}{ \varphi\left(\frac{\tau}{\sigma}\right) s\left(\ln\frac{\tau}{\sigma}\right)} - 1,
$$
tends to zero uniformly, since $\varphi$ is a SVF, $s$ is continuous, periodic, bounded and separated from zero. The rest of the multipliers in the second term are bounded  according to Lemma~\ref{MellinConv1}.

For the third term similarly to Lemma~\ref{MellinConv1} we obtain an estimate
$$
\int\limits_\tau^{\tau e^\delta} \varphi\left(\frac{\tau}{\sigma}\right) s\left(\ln\frac{\tau}{\sigma}\right)
\widetilde \varphi(\sigma) \widetilde s(\ln\sigma)\dfrac{d(\widetilde{\varrho}(\ln \sigma))}{\widetilde{\varrho}(\ln \sigma)} = O(\phi(\tau e^\delta) - \phi(\tau)),
\quad \tau\to\infty,
$$
thus it tends to zero same as the first one. Thereby, the uniform continuousness is proven.

Now we need to prove, that $r(\ln\tau)$ is a SVF. By definition we have
\begin{equation}
\begin{split}\label{restim0}
&r(\ln\tau+T)\phi(\tau e^T) - r(\ln\tau)\phi(\tau)
\\
&\quad = \int\limits_\tau^{\tau e^T}\varphi\left(e^T\cdot \frac{\tau}{\sigma}\right) s\left(\ln\frac{\tau}{\sigma}\right)
\widetilde \varphi(\sigma) \widetilde s(\ln\sigma)\dfrac{d(\widetilde{\varrho}(\ln \sigma))}{\widetilde{\varrho}(\ln \sigma)}
\\
&\qquad+\int\limits_1^{\tau}\left(\varphi\left(e^T\cdot \frac{\tau}{\sigma}\right)-\varphi\left(\frac{\tau}{\sigma}\right)\right) s\left(\ln\frac{\tau}{\sigma}\right)
\widetilde \varphi(\sigma) \widetilde s(\ln\sigma)\dfrac{d(\widetilde{\varrho}(\ln \sigma))}{\widetilde{\varrho}(\ln \sigma)}.
\end{split}
\end{equation}
We rewrite the left-hand side of \eqref{restim0}, considering the relation $\phi(\tau e^T) = \phi(\tau)(1+o(1))$, as
\begin{equation}\label{restim1}
r(\ln\tau+T)\phi(\tau e^T) - r(\ln\tau)\phi(\tau) = \big(r(\ln\tau + T) - r(\ln\tau)\big)\phi(\tau) + o(\phi(\tau)),\quad\tau\to\infty.
\end{equation}
In the right-hand side of \eqref{restim0}, as $\tau\to\infty$, the first integral satisfies the estimate
\begin{align}
\begin{split}\label{restim2}
\int\limits_\tau^{\tau e^T}\varphi&\left(e^T\cdot \frac{\tau}{\sigma}\right) s\left(\ln\frac{\tau}{\sigma}\right)
\widetilde \varphi(\sigma) \widetilde s(\ln\sigma)\dfrac{d(\widetilde{\varrho}(\ln \sigma))}{\widetilde{\varrho}(\ln \sigma)}
 \\
 &\sim \widetilde\varphi(\tau)\int\limits_1^{e^T}\varphi(\frac{e^T}{\sigma}) s\left(\ln \frac{1}{\sigma}\right)
\widetilde s(\ln(\tau\sigma))\dfrac{d(\widetilde{\varrho}(\ln (\tau\sigma)))}{\widetilde{\varrho}(\ln (\tau\sigma))} = o(\phi(\tau)).
\end{split}
\end{align}
To estimate the second integral we use the Proposition~\ref{SVFasymp}. Since
$\varphi(\tau e^T) = \varphi(\tau)(1+o(1))$ as $\tau\to\infty$, we have
\begin{align*}
& \int\limits_1^{\tau}\varphi\left(e^T\cdot \frac{\tau}{\sigma}\right) s\left(\ln\frac{\tau}{\sigma}\right)
\widetilde \varphi(\sigma)\widetilde s(\ln\sigma)\dfrac{d(\widetilde{\varrho}(\ln \sigma))}{\widetilde{\varrho}(\ln \sigma)}
\\
&\quad = \int\limits_1^{\tau}\varphi\left(\frac{\tau}{\sigma}\right) s\left(\ln\frac{\tau}{\sigma}\right)
\widetilde \varphi(\sigma) \widetilde s(\ln\sigma)\dfrac{d(\widetilde{\varrho}(\ln \sigma))}{\widetilde{\varrho}(\ln \sigma)}(1+o(1)).
\end{align*}
According to Lemma~\ref{MellinConv1} the integral in the right-hand side could be estimated as $O(\phi(\tau))$, thus
\begin{equation}\label{restim3}
\int\limits_1^{\tau}\left(\varphi\left(e^T\cdot \frac{\tau}{\sigma}\right)-\varphi\left(\frac{\tau}{\sigma}\right)\right) s\left(\ln\frac{\tau}{\sigma}\right)
\widetilde \varphi(\sigma) \widetilde s(\ln\sigma)\dfrac{d(\widetilde{\varrho}(\ln \sigma))}{\widetilde{\varrho}(\ln \sigma)} = o(\phi(\tau)),
\quad \tau\to\infty.
\end{equation}
From \eqref{restim1}, \eqref{restim2}, \eqref{restim3} it follows, that
$$
r(\ln\tau + T) - r(\ln\tau) = o(1), \quad \tau\to\infty.
$$
By the same argument, we obtain
$$
r(\ln\tau + \widetilde T) - r(\ln\tau) = o(1), \quad \tau\to\infty.
$$
Hence, for arbitrary $z_1, z_2 \in \mathbb{Z}$ we have
$$
r(\ln\tau +z_1 T + z_2\widetilde T) - r(\ln\tau) = o(1), \quad \tau\to\infty.
$$
Since the periods are incommensurable, the set $\{z_1T+z_2\widetilde T | z_1, z_2 \in \mathbb{Z}\}$ is dense in $\mathbb{R}$, thus, from the uniform continuousness of $r$ it follows, that for every $c\in\mathbb{R}$ we have
$$
r(\ln\tau + c) - r(\ln\tau) = o(1), \quad \tau\to\infty.
$$
Hence, considering, that $r$ is bounded and separated from zero, we conclude, that the function $\psi(\tau) := r(\ln(\tau))$ is a SVF.
\end{proof}

\noindent Under certain additional conditions it is possible to demonstrate, that $\psi = \const$.

\begin{rustheorem}\label{noncommes1}
Let operators $\mathcal{T}$ and $\widetilde{\mathcal{T}}$ satisfy the conditions of Theorem~{\rm \ref{noncommes}}. Suppose, also, that functions $\varphi$ and $\widetilde\varphi$ satisfy the following estimates:
\begin{equation}\label{addconstrict}
\left|\dfrac{\sigma\ln(\sigma) \varphi'(\sigma)}{\varphi(\sigma)}\right|\leq C, \quad
\left|\dfrac{\sigma\ln(\sigma) \widetilde\varphi'(\sigma)}{\widetilde\varphi(\sigma)}\right|\leq C, \qquad \sigma \geqslant 1.
\end{equation}
Then
$$
\mathcal{N}_\otimes(t) \sim \dfrac{\Cspec\phi(1/t)}{t^{1/p}}, \quad t\to +0,
$$
where $\phi(s) = (\varphi \ast \widetilde\varphi)(s)$, and the constant $\Cspec$ is defined in \eqref{Cdef}.
\end{rustheorem}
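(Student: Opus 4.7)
The plan is to refine Theorem \ref{noncommes} by pinning down the limiting value of the slowly varying function $\psi$ produced there. The relation \eqref{NisMellin}, established in the proof of Theorem \ref{Thinfinf} and reused in Theorem \ref{noncommes}, shows that the theorem reduces to the asymptotic equivalence
\[
(\varphi s\ast\widetilde\varphi\widetilde s)(\tau)=\Cspec\,\phi(\tau)(1+o(1)),\qquad\tau\to\infty,
\]
where $\phi=\varphi\ast\widetilde\varphi$; this fixes the multiplicative constant as $\Cspec$.

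To prove this equivalence, I first use Lemma \ref{MellinConv2} to replace $H_1[\varphi s,\widetilde\varphi\widetilde s]$ by $H[\widetilde\varphi\widetilde s,\varphi s](1+o(1))$, so that every integral runs over $[1,\sqrt\tau]$. Introduce the primitive
\[
M(\sigma,\tau):=\int\limits_1^\sigma s(\ln(\tau/u))\widetilde s(\ln u)\,\dfrac{d(\widetilde\varrho(\ln u))}{\widetilde\varrho(\ln u)},
\]
and apply Lemma \ref{MellinConv3} to obtain $M(\sigma,\tau)=\Cspec\ln\sigma+\eta_\tau(\sigma)$ with $|\eta_\tau(\sigma)|\leqslant\varepsilon(\sigma)\ln\sigma$, where $\varepsilon(\sigma)\to 0$ as $\sigma\to\infty$ uniformly in $\tau$. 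Writing $H[\varphi s,\widetilde\varphi\widetilde s](\tau)=\int_1^{\sqrt\tau}\varphi(\tau/\sigma)\widetilde\varphi(\sigma)\,dM(\sigma,\tau)$, integrating by parts, and subtracting the analogous integration by parts of $\Cspec h_{\widetilde\varphi,\varphi}(\tau)=\Cspec\int_1^{\sqrt\tau}\varphi(\tau/\sigma)\widetilde\varphi(\sigma)\,d\ln\sigma$, the problem reduces to bounding
\[
\varphi(\sqrt\tau)\widetilde\varphi(\sqrt\tau)\eta_\tau(\sqrt\tau)-\int\limits_1^{\sqrt\tau}\eta_\tau(\sigma)\,[\varphi(\tau/\sigma)\widetilde\varphi(\sigma)]'_\sigma\,d\sigma.
\]

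The hypothesis \eqref{addconstrict} enters via the pointwise bound
\[
\bigl|[\varphi(\tau/\sigma)\widetilde\varphi(\sigma)]'_\sigma\bigr|\leqslant\dfrac{C\varphi(\tau/\sigma)\widetilde\varphi(\sigma)}{\sigma}\Bigl(\dfrac{1}{\ln\sigma}+\dfrac{1}{\ln(\tau/\sigma)}\Bigr).
\]
On $[1,\sqrt\tau]$ one has $\ln\sigma/\ln(\tau/\sigma)\leqslant 1$, so the integrand is dominated by $C\varepsilon(\sigma)\varphi(\tau/\sigma)\widetilde\varphi(\sigma)/\sigma$; splitting the range of integration at a large fixed threshold then shows this integral is $o(h_{\widetilde\varphi,\varphi}(\tau))$. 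For the boundary term, \eqref{addconstrict} implies both $\varphi(\sqrt\tau)\asymp\varphi(\tau)$ and $\widetilde\varphi(u)\asymp\widetilde\varphi(\sqrt\tau)$ on $u\in[\tau^{1/4},\sqrt\tau]$, which yields $\widetilde\varphi(\sqrt\tau)\ln\tau\leqslant C\int_1^{\sqrt\tau}\widetilde\varphi(u)\,du/u$ and $h_{\widetilde\varphi,\varphi}(\tau)\geqslant c\,\varphi(\tau)\int_1^{\sqrt\tau}\widetilde\varphi(u)\,du/u$, so the boundary contribution is also $o(h_{\widetilde\varphi,\varphi}(\tau))$. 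The symmetric argument applied to $H[\widetilde\varphi\widetilde s,\varphi s](\tau)$ delivers $\Cspec h_{\varphi,\widetilde\varphi}(\tau)(1+o(1))$, and summing yields the claim. The main obstacle is the blow-up of the derivative bound near $\sigma=\tau$ (where $\ln(\tau/\sigma)\to 0$); this is precisely what forces the preliminary reduction to $[1,\sqrt\tau]$ via Lemma \ref{MellinConv2}, and it is in this step that the divergence hypothesis \eqref{phiinfinf} is indispensable.
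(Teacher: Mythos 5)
Your proposal is correct and follows essentially the same route as the paper's proof: both integrate $H[\varphi s,\widetilde\varphi\widetilde s]$ by parts, invoke Lemma~\ref{MellinConv3} (uniformity in $\omega$) to extract the main term $\Cspec\, h_{\widetilde\varphi,\varphi}(\tau)$, use the hypothesis \eqref{addconstrict} to control $\sigma\big[\varphi(\tau/\sigma)\widetilde\varphi(\sigma)\big]'_\sigma\,\ln\sigma$ by $\varphi(\tau/\sigma)\widetilde\varphi(\sigma)$ on $[1,\sqrt\tau]$, and finally reduce $H_1$ to $H[\widetilde\varphi\widetilde s,\varphi s]$ via Lemma~\ref{MellinConv2}. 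The only cosmetic difference is that you isolate the error $\eta_\tau=M-\Cspec\ln\sigma$ and bound the boundary and integral contributions directly, whereas the paper folds the $o(1)$ back into the integrand and applies Proposition~\ref{propSVF2}, part 2 --- an equivalent bookkeeping.
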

\begin{proof}
We are aiming to prove the estimate
\begin{equation}\label{Th5Asymp1}
(\varphi s \ast \widetilde \varphi \widetilde s)(\tau) \sim 
\Cspec\phi(\tau), \quad \tau\to\infty.
\end{equation}
First, let's estimate $H[\varphi s, \widetilde \varphi \widetilde s](\tau)$. In order to do that, we integrate by parts and use Lemma~\ref{MellinConv3}.
{\allowdisplaybreaks
\begin{align*}
H[\varphi s, \widetilde \varphi \widetilde s](\tau)& =
\int\limits_1^{\sqrt{\tau}} \varphi\left(\frac{\tau}{\sigma}\right) \widetilde \varphi(\sigma) 
\,d\bigg(\int\limits_1^\sigma s\big(\ln \frac{\tau}{\xi}\big)
 \widetilde s\big(\ln\xi\big)\dfrac{d(\widetilde{\varrho}(\ln \xi))}{\widetilde{\varrho}(\ln \xi)}\bigg)
 \\
&  = \varphi(\sqrt{\tau}) \widetilde \varphi(\sqrt{\tau}) 
\int\limits_1^{\sqrt{\tau}} s\big(\ln \frac{\tau}{\xi}\big)
 \widetilde s\big(\ln\xi\big)\dfrac{d(\widetilde{\varrho}(\ln \xi))}{\widetilde{\varrho}(\ln \xi)}
 \\
 &\qquad - \int\limits_1^{\sqrt{\tau}} \Big(\varphi\left(\frac{\tau}{\sigma}\right) \widetilde \varphi(\sigma)\Big)'_\sigma 
\int\limits_1^\sigma s\big(\ln \frac{\tau}{\xi}\big)
 \widetilde s\big(\ln\xi\big)\dfrac{d(\widetilde{\varrho}(\ln \xi))}{\widetilde{\varrho}(\ln \xi)} \,d\sigma
 \\
 & = \varphi(\sqrt{\tau}) \widetilde \varphi(\sqrt{\tau}) 
(\Cspec+o(1))\ln(\sqrt{\tau})
\\
&\qquad - \int\limits_1^{\sqrt{\tau}} \Big(\varphi\left(\frac{\tau}{\sigma}\right) \widetilde \varphi(\sigma)\Big)'_\sigma 
(\Cspec+o(1))\ln\sigma \,d\sigma.
\end{align*}
}
We transform the main term of the asymptotics by reversing the integration by parts:
$$
\Cspec \Big(\varphi(\sqrt{\tau}) \widetilde \varphi(\sqrt{\tau}) 
\ln(\sqrt{\tau}) - \int\limits_1^{\sqrt{\tau}} \Big(\varphi\left(\frac{\tau}{\sigma}\right) \widetilde \varphi(\sigma)\Big)'_\sigma 
\ln\sigma \,d\sigma\Big) = \Cspec h_{\widetilde\varphi, \varphi}(\tau).
$$
Now, let's estimate the contribution of each $o(1)$.
\begin{multline*}
\Cspec h_{\widetilde\varphi, \varphi}(\tau) + \int\limits_1^{\sqrt{\tau}} \Big(\varphi\left(\frac{\tau}{\sigma}\right) \widetilde \varphi(\sigma)\Big)'_\sigma \ln\sigma \cdot o(1) \,d\sigma 
\\
 = \Cspec\!\int\limits_1^{\sqrt{\tau}}\!\! \Big[ 1 + \Big(\dfrac{\sigma\ln(\sigma) \widetilde\varphi'(\sigma)}{\widetilde
\varphi(\sigma)} + \dfrac{\ln(1/\sigma)}{\ln(\tau/\sigma)}\cdot\dfrac{(\tau/\sigma)\ln(\tau/\sigma) \varphi'(\tau/\sigma)}{\varphi(\tau/\sigma)}\Big) o(1)\Big] \cdot
\varphi\!\left(\frac{\tau}{\sigma}\right)\! \widetilde \varphi(\sigma\!) 
\dfrac{d\sigma}{\sigma} 
\\
= (\Cspec+o(1))h_{\widetilde\varphi, \varphi}(\tau).
\end{multline*}
according to Proposition~\ref{propSVF2}, part 2, since the expression in the round parentheses is bounded according to the additional conditions \eqref{addconstrict}.
By the same argument we have
\begin{align*}
&\varphi(\sqrt{\tau}) \widetilde \varphi(\sqrt{\tau}) 
\ln(\sqrt{\tau}) \cdot o(1)
\\
&\qquad = o(1) \cdot \Big(h_{\widetilde\varphi, \varphi}(\tau) + 
\int\limits_1^{\sqrt{\tau}} \Big(\varphi\left(\frac{\tau}{\sigma}\right) \widetilde \varphi(\sigma)\Big)'_\sigma \ln\sigma \,d\sigma \Big) = o(h_{\widetilde\varphi, \varphi}(\tau)).
\end{align*}

\noindent Thus, we obtained the estimate
\begin{equation}\label{Th5Asymp2}
H[\varphi s, \widetilde \varphi \widetilde s](\tau) =  (\Cspec+o(1)) h_{\widetilde\varphi, \varphi}(\tau).
\end{equation}
Similarly, considering Lemma~\ref{MellinConv2}, we obtain
\begin{equation}\label{Th5Asymp3}
H_1[\varphi s, \widetilde \varphi \widetilde s](\tau) = H[\widetilde \varphi \widetilde s, \varphi s](\tau)(1+o(1)) = (\Cspec+o(1))h_{\varphi, \widetilde\varphi}(\tau).
\end{equation}
From asymptotics \eqref{Th5Asymp2} and \eqref{Th5Asymp3} we obtain the required asymptotics \eqref{Th5Asymp1}.
\end{proof}

\begin{rusremark}
From the additional conditions \eqref{addconstrict} it follows, that for a certain $C > 0$ the estimates
$$
\varphi(e) (\ln\sigma)^{-C} \leq \varphi(\sigma) \leq \varphi(e) (\ln\sigma)^{C}, \qquad \widetilde \varphi(e) (\ln\sigma)^{-C} \leq \widetilde\varphi(\sigma) \leq \widetilde \varphi(e) (\ln\sigma)^{C}
$$
hold for $\sigma \geqslant e$. The additional conditions clearly hold for the SVFs of the form $(1+\ln(\tau))^{\varkappa}$. In the general case, the question of the constancy of the function $\psi$ in the Theorem~\ref{noncommes} remains open.
\end{rusremark}

Now let's consider the cases, when one or both of the integrals of the SVFs are finite.

\begin{rustheorem}\label{Thinfbound}
Let operators $\mathcal{T}$ and $\widetilde{\mathcal{T}}$ satisfy the conditions of Theorem~{\rm \ref{ThNEstim}}, suppose
$$
\int\limits_1^{\infty}\varphi(\tau)\dfrac{d\tau}{\tau} < \infty, \quad 
\int\limits_1^{\infty}\widetilde\varphi(\tau)\dfrac{d\tau}{\tau} = \infty,
$$
and the periods of the functions $s$ and $\widetilde s$ coincide and equal $T$.
Suppose, also, that for $(\varphi, \widetilde\varphi)$ part 4 of the Proposition~{\rm\ref{propSVF2}} holds. Then
$$
\mathcal{N}_\otimes(t)\sim \dfrac{h_{\widetilde\varphi, \varphi}(1/t)
\cdot s_\otimes(\ln(1/t))
 +\widetilde\varphi(1/t) \cdot \widetilde s^*(\ln(1/t))
}{t^{1/p}},
$$
where $s_\otimes$ is defined in \eqref{sotimesdef}, and
\begin{equation}\label{s_ast_tilde}
\widetilde s^*(\tau) = \sum_n \widetilde s(\tau+\ln(\lda_n))\lda_n^{1/p}
\end{equation}
{\rm(}cf. \eqref{s_ast}{\rm)}.
\end{rustheorem}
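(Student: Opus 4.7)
The plan is to follow the architecture of Theorem~\ref{Thinfinf}: apply the two-sided bound of Theorem~\ref{ThNEstim} with $\tau=\alpha_\pm(\varepsilon)/t$, and then analyze each of $S(t,\varepsilon)$, $\widetilde S(t,\varepsilon)$ and the remaining integral $I(\tau,\varepsilon)$ as first $t\to+0$ and then $\varepsilon\to+0$. The asymmetry of the assumptions $\int_1^\infty\varphi\,d\tau/\tau<\infty$ and $\int_1^\infty\widetilde\varphi\,d\tau/\tau=\infty$ forces an asymmetric treatment: unlike in Theorem~\ref{Thinfinf}, the piece $\widetilde S$ will now survive and produce the non-negligible term $\widetilde\varphi(1/t)\widetilde s^*(\ln(1/t))$, on an equal footing with the main contribution extracted from the integral.

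For $S(t,\varepsilon)$ the finite sum $\sum_{\widetilde\lda_k\ge\varepsilon}s(\ln(1/t)+\ln\widetilde\lda_k)\widetilde\lda_k^{1/p}$ is bounded for each fixed $\varepsilon$, so $S(t,\varepsilon)=O_\varepsilon(\varphi(1/t))$; since $\int_1^\infty\widetilde\varphi\,d\tau/\tau=\infty$, Proposition~\ref{propSVF2}, part~1, gives $\varphi(1/t)=o(h_{\widetilde\varphi,\varphi}(1/t))$, and $S$ is absorbed into the error. For $\widetilde S(t,\varepsilon)$ I shall use that by Proposition~\ref{proposEIGEN} the assumption $\int_1^\infty\varphi\,d\tau/\tau<\infty$ is equivalent to $\sum_n\lda_n^{1/p}<\infty$; therefore the partial sum in \eqref{asymptildeS} converges to $\widetilde s^*(\ln\tau)$ uniformly in $\tau$ as $\varepsilon\to+0$ because $\widetilde s$ is bounded. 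The boundary piece $\widetilde\varphi(1/t)\varphi(1/\varepsilon)s(\ln(1/\varepsilon))\widetilde s(\ln(\tau\varepsilon))$ is $o_\varepsilon(\widetilde\varphi(1/t))$ thanks to Proposition~\ref{propSVF1}, part~4, which forces $\varphi(1/\varepsilon)\to 0$. Hence $\widetilde S(t,\varepsilon)=\widetilde\varphi(1/t)\widetilde s^*(\ln(1/t))(1+o_\varepsilon(1))$ as $t\to+0$.

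The integral $I(\tau,\varepsilon)$, taken over $[\alpha_\mp(\varepsilon)/\varepsilon,\varepsilon\tau]$, will be matched against $H[\varphi s,\widetilde\varphi\widetilde s](\tau)=\int_1^{\sqrt{\tau}}$ by writing $I=H+\int_{\sqrt{\tau}}^{\varepsilon\tau}-\int_1^{\alpha_\mp(\varepsilon)/\varepsilon}$ for $\tau$ large. The lower correction has a bounded $\sigma$-range, so $\varphi(\tau/\sigma)\sim\varphi(\tau)$ uniformly and the total variation of the measure on that range is bounded, giving $O_\varepsilon(\varphi(\tau))=o(h_{\widetilde\varphi,\varphi}(\tau))$ by Proposition~\ref{propSVF2}, part~1. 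For the upper correction, the substitution $\xi=\tau/\sigma$ turns it into a restricted $h_{\varphi,\widetilde\varphi}$-type integral over $\xi\in[1/\varepsilon,\sqrt{\tau}]$; the scheme of Lemma~\ref{MellinConv1} bounds it by $C\int_{1/\varepsilon}^{\sqrt{\tau}}\varphi(\xi)\widetilde\varphi(\tau/\xi)\,d\xi/\xi$, which equals $h_{\varphi,\widetilde\varphi}(\tau)$ minus a head $\int_1^{1/\varepsilon}\varphi(\xi)\widetilde\varphi(\tau/\xi)\,d\xi/\xi\sim\widetilde\varphi(\tau)\int_1^{1/\varepsilon}\varphi(\sigma)\,d\sigma/\sigma$ (by uniform SVF regularity on a bounded $\xi$-range). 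Under the extra hypothesis that Proposition~\ref{propSVF2}, part~4, applies to $(\varphi,\widetilde\varphi)$, the full $h_{\varphi,\widetilde\varphi}(\tau)$ is asymptotic to $\widetilde\varphi(\tau)\int_1^\infty\varphi(\sigma)\,d\sigma/\sigma$, so the upper correction is at most $C\widetilde\varphi(\tau)\int_{1/\varepsilon}^\infty\varphi(\sigma)\,d\sigma/\sigma=o_\varepsilon(\widetilde\varphi(\tau))$. Finally, Lemma~\ref{raznos} (whose hypothesis $\int_1^\infty\widetilde\varphi\,d\tau/\tau=\infty$ is in force) gives $H\sim h_{\widetilde\varphi,\varphi}(\tau)s_\otimes(\ln\tau)$.

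Combining the three contributions yields
$$
\mathcal{N}_\otimes(t)\lessgtr\frac{\alpha_\pm(\varepsilon)}{t^{1/p}}\bigl(h_{\widetilde\varphi,\varphi}(1/t)s_\otimes(\ln(1/t))+\widetilde\varphi(1/t)\widetilde s^*(\ln(1/t))\bigr)\bigl(1+\eta(t,\varepsilon)\bigr),
$$
with $\eta(t,\varepsilon)\to 0$ as $t\to+0$ first and then $\varepsilon\to+0$. Passing to $\limsup$/$\liminf$ as in the concluding part of the proof of Theorem~\ref{Thinfinf}, with $\alpha_\pm(\varepsilon)\to 1$, together with the uniform continuity on one period of $s_\otimes$ (continuous by \eqref{sotimes}) and of $\widetilde s^*$ (thanks to uniform convergence of the series \eqref{s_ast_tilde}), delivers the claimed asymptotics. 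The main technical obstacle is precisely the control of the upper boundary correction $\int_{\sqrt{\tau}}^{\varepsilon\tau}$: this is the only step where the extra assumption that Proposition~\ref{propSVF2}, part~4, applies to $(\varphi,\widetilde\varphi)$ is used, because it is that hypothesis that provides the error gauge $\int_{1/\varepsilon}^\infty\varphi(\sigma)\,d\sigma/\sigma\to 0$ needed to separate the correction from the $\widetilde\varphi(1/t)\widetilde s^*(\ln(1/t))$ term produced by $\widetilde S$.
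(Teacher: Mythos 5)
Your proof is correct and follows essentially the same route as the paper: the two-sided bound from Theorem~\ref{ThNEstim}, then $S(t,\varepsilon)=o(h_{\widetilde\varphi,\varphi}(1/t))$ via Proposition~\ref{propSVF2} part~1, $\widetilde S(t,\varepsilon)\sim\widetilde\varphi(1/t)\,\widetilde s^*(\ln(1/t))$ via Propositions~\ref{proposEIGEN} and~\ref{propSVF1} part~4, and the integral decomposed as $H[\varphi s,\widetilde\varphi\widetilde s]$ minus a bounded head and plus an upper-boundary piece controlled by the tail $\int_{1/\varepsilon}^\infty\varphi\,d\sigma/\sigma$, with Lemma~\ref{raznos} supplying $H\sim h_{\widetilde\varphi,\varphi}(\tau)s_\otimes(\ln\tau)$. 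The only cosmetic difference is that you reach the bound $C\widetilde\varphi(\tau)\int_{1/\varepsilon}^\infty\varphi\,d\sigma/\sigma$ via $h_{\varphi,\widetilde\varphi}(\tau)$ minus a head term, whereas the paper estimates the third integral directly in the style of Proposition~\ref{propSVF2} part~4 --- same hypothesis, same gauge, same conclusion.
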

\begin{rusremark}
The sum \eqref{s_ast_tilde} converges according to the Proposition~\ref{proposEIGEN}.
\end{rusremark}
\begin{proof}

Fix $\varepsilon>0$. According to Proposition~\ref{propSVF2}, part 1, we have
$$
S(t, \varepsilon) = o(h_{\widetilde\varphi, \varphi}(1/t)), \quad t\to+0.
$$
According to Proposition~\ref{propSVF1}, part 4, we have
$$
\widetilde S(t, \varepsilon) \sim \widetilde\varphi(1/t)\cdot \Big(
\sum_{n} \widetilde s(\ln\tau+\ln\lda_k)\lda_k^{1/p}+
\nu(\varepsilon)
\Big),
$$
where $\nu(\varepsilon)\to 0$ as $\varepsilon\to +0$. What's left is to estimate the integral term.
{\allowdisplaybreaks
\begin{align*}
&\int\limits_{\alpha_\mp(\varepsilon)/\varepsilon}^{\varepsilon\tau}
\varphi\left(\frac{\tau}{\sigma}\right)\widetilde\varphi(\sigma) s\left(\ln\frac{\tau}{\sigma}\right)
\widetilde s(\ln\sigma)\dfrac{d(\widetilde{\varrho}(\ln \sigma))}{\widetilde{\varrho}(\ln \sigma)}
\\
&\quad= \int\limits_{1}^{\sqrt{\tau}}
\varphi\left(\frac{\tau}{\sigma}\right)\widetilde\varphi(\sigma) s\left(\ln\frac{\tau}{\sigma}\right)
\widetilde s(\ln\sigma)\dfrac{d(\widetilde{\varrho}(\ln \sigma))}{\widetilde{\varrho}(\ln \sigma)} 
\\
&\qquad-\int\limits_1^{\alpha_\mp(\varepsilon)/\varepsilon}
\varphi\left(\frac{\tau}{\sigma}\right)\widetilde\varphi(\sigma) s\left(\ln\frac{\tau}{\sigma}\right)
\widetilde s(\ln\sigma)\dfrac{d(\widetilde{\varrho}(\ln \sigma))}{\widetilde{\varrho}(\ln \sigma)} 
\\
&\qquad+\int\limits_{1/\varepsilon}^{\sqrt{\tau}}
\varphi(\sigma)\widetilde\varphi\left(\frac{\tau}{\sigma}\right) s(\ln\sigma)
\widetilde s(\ln(\tau/\sigma))\dfrac{d(\widetilde{\varrho}(\ln (\tau/\sigma)))}{\widetilde{\varrho}(\ln (\tau/\sigma))}.
\end{align*}}
We estimate the first term by Lemma~\ref{raznos}. We estimate the second term as $O(\varphi(\tau))=
o(h_{\widetilde\varphi, \varphi}(\tau))$ as $\tau\to\infty$. What's left is to estimate the third term, which we estimate similarly to Proposition~\ref{propSVF2}, part 4:
\begin{align*}
\int\limits_{1/\varepsilon}^{\sqrt{\tau}}
&\varphi(\sigma)\widetilde\varphi\left(\frac{\tau}{\sigma}\right) s(\ln\sigma)
\widetilde s(\ln(\tau/\sigma))\dfrac{d(\widetilde{\varrho}(\ln (\tau/\sigma)))}{\widetilde{\varrho}(\ln (\tau/\sigma))}
\leq C \widetilde\varphi(\tau)\int\limits_{1/\varepsilon}^{\infty}
\varphi(\sigma)\dfrac{d\sigma}{\sigma}, \quad
\tau\to\infty.
\end{align*}
Here
$$
\int\limits_{1/\varepsilon}^{\infty}
\varphi(\sigma)\dfrac{d\sigma}{\sigma} \to 0, \quad \varepsilon\to 0,
$$
thus, this term's contribution to the asymptotics is negligible.
\end{proof}

\begin{rusremark}
Similarly to the Theorem~\ref{noncommes}, if the periods $T$ and $\widetilde T$ are incommensurable, then instead of $s_\otimes(\ln(\tau))$ in the asymptotics we obtain a bounded and separated from zero SVF, degenerating into constant in the same particular cases as in the Theorem~\ref{noncommes1}.
\end{rusremark}

\begin{rustheorem}\label{Thboundbound}
Let operators $\mathcal{T}$ and $\widetilde{\mathcal{T}}$ satisfy the conditions of Theorem~{\rm\ref{ThNEstim}}, suppose
$$
\int\limits_1^{\infty}\varphi(\tau)\dfrac{d\tau}{\tau} < \infty, \quad 
\int\limits_1^{\infty}\widetilde\varphi(\tau)\dfrac{d\tau}{\tau} < \infty,
$$
and for $(\varphi, \widetilde\varphi)$ and $(\widetilde\varphi, \varphi)$ part 4 of the Poposition~{\rm\ref{propSVF2}} holds. Then
$$
\mathcal{N}_\otimes(t)\sim \dfrac{ \varphi(1/t) \cdot s^*(\ln(1/t))+\widetilde\varphi(1/t)
 \cdot \widetilde s^*(\ln(1/t))
}{t^{1/p}},
$$
where $s^*$ is defined in \eqref{s_ast}, $\widetilde s^*$ is defined in \eqref{s_ast_tilde}.
\end{rustheorem}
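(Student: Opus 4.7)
The plan is to adapt the strategy of Theorem~\ref{Thinfbound}: apply the two-sided estimate of Theorem~\ref{ThNEstim}, identify $S(t,\varepsilon)$ and $\widetilde S(t,\varepsilon)$ as producing the two main terms, and show that the integral remainder is negligible as $\varepsilon\to 0$. Since both $\int_1^\infty\varphi\,d\tau/\tau$ and $\int_1^\infty\widetilde\varphi\,d\tau/\tau$ converge, Proposition~\ref{proposEIGEN} gives $\sum\lambda_n^{1/p}<\infty$ and $\sum\widetilde\lambda_n^{1/p}<\infty$, so the series in \eqref{s_ast} and \eqref{s_ast_tilde} converge absolutely and uniformly; boundedness of $s,\widetilde s$ and their separation from zero make $s^*$ and $\widetilde s^*$ continuous periodic functions that are bounded from zero.

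For $S(t,\varepsilon)$, Theorem~\ref{ThNEstim} yields $S(t,\varepsilon)\sim\varphi(1/t)\sum_{\widetilde\lambda_k\geq\varepsilon}s(\ln(1/t)+\ln\widetilde\lambda_k)\widetilde\lambda_k^{1/p}$, and the omitted tail is bounded by $\|s\|_\infty\sum_{\widetilde\lambda_k<\varepsilon}\widetilde\lambda_k^{1/p}$, which tends to $0$ uniformly in $t$ as $\varepsilon\to 0$. Hence $S(t,\varepsilon)=\varphi(1/t)[s^*(\ln(1/t))+o_\varepsilon(1)]$. For $\widetilde S(t,\varepsilon)$, the extra piece $\varphi(1/\varepsilon)s(\ln(1/\varepsilon))\widetilde s(\ln(\tau\varepsilon))$ in \eqref{asymptildeS} vanishes as $\varepsilon\to 0$, since $\varphi(1/\varepsilon)\to 0$ by Proposition~\ref{propSVF1} part 4 while $s,\widetilde s$ are bounded; the sum over $\lambda_k\geq\varepsilon$ tends to $\widetilde s^*(\ln\tau)$, and uniform continuity of $\widetilde s^*$ together with $\ln\tau=\ln(1/t)+\ln\alpha_\pm(\varepsilon)$ gives $\widetilde S(t,\varepsilon)=\widetilde\varphi(1/t)[\widetilde s^*(\ln(1/t))+o_\varepsilon(1)]$.

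The main obstacle is controlling the integral
$$
I(t,\varepsilon)=\int_{\alpha_\mp(\varepsilon)/\varepsilon}^{\varepsilon\tau}\varphi\left(\frac{\tau}{\sigma}\right)\widetilde\varphi(\sigma) s\left(\ln\frac{\tau}{\sigma}\right)\widetilde s(\ln\sigma)\frac{d(\widetilde\varrho(\ln\sigma))}{\widetilde\varrho(\ln\sigma)}.
$$
Using boundedness of $s,\widetilde s$ and the step-function comparison of the periodic measure with $d\sigma/\sigma$ already employed in Lemma~\ref{MellinConv1}, one gets $|I(t,\varepsilon)|\leq C\int_{1/\varepsilon}^{\varepsilon\tau}\varphi(\tau/\sigma)\widetilde\varphi(\sigma)\,d\sigma/\sigma$. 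Splitting at $\sqrt\tau$: on $[1/\varepsilon,\sqrt\tau]$ the inequality $\tau>\sigma^2$ allows the bound $\varphi(\tau/\sigma)\leq m_\varphi(\sigma)\varphi(\tau)$, so the part 4 hypothesis of Proposition~\ref{propSVF2} applied to the pair $(\widetilde\varphi,\varphi)$ (which ensures $\int_1^\infty m_\varphi(\sigma)\widetilde\varphi(\sigma)\,d\sigma/\sigma<\infty$) controls this half by $\varphi(\tau)\nu_1(\varepsilon)$ with $\nu_1(\varepsilon)\to 0$. The substitution $\sigma\mapsto\tau/\sigma$ turns $[\sqrt\tau,\varepsilon\tau]$ into $[1/\varepsilon,\sqrt\tau]$ with $\varphi$ and $\widetilde\varphi$ interchanged, and the analogous argument using the part 4 hypothesis for $(\varphi,\widetilde\varphi)$ yields the bound $\widetilde\varphi(\tau)\nu_2(\varepsilon)$ with $\nu_2(\varepsilon)\to 0$.

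Finally, since $\tau=\alpha_\pm(\varepsilon)/t$ with $\alpha_\pm(\varepsilon)\to 1$, the slow variation of $\varphi,\widetilde\varphi$ and the uniform continuity of $s^*,\widetilde s^*$ let us replace $\varphi(\tau),\widetilde\varphi(\tau),s^*(\ln\tau),\widetilde s^*(\ln\tau)$ by their values at $1/t$ with negligible error. Assembling the three estimates produces
$$
\mathcal{N}_\otimes(t)\cdot t^{1/p}\lessgtr\alpha_\pm(\varepsilon)\bigl[\varphi(1/t)s^*(\ln(1/t))+\widetilde\varphi(1/t)\widetilde s^*(\ln(1/t))\bigr]\bigl(1+o_{t\to 0}(1)+o_{\varepsilon\to 0}(1)\bigr).
$$
Dividing by the bracketed main term (bounded away from zero thanks to positivity of $s^*,\widetilde s^*$), taking $\limsup$ or $\liminf$ as $t\to 0$, and then sending $\varepsilon\to 0$ concludes the proof.
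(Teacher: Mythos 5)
Your proof is correct and follows the approach the paper indicates (the paper omits the proof, saying only that it is "similar to the previous one," i.e., Theorem~\ref{Thinfbound}). You correctly identify the structural change from that case: both $S(t,\varepsilon)$ and $\widetilde S(t,\varepsilon)$ now contribute main terms (whereas in Theorem~\ref{Thinfbound} the term $S$ was swallowed as $o(h_{\widetilde\varphi,\varphi})$), and the restricted integral remainder is controlled precisely by the two part-4 hypotheses of Proposition~\ref{propSVF2} via the $m_\varphi$/$m_{\widetilde\varphi}$ bounds after the split at $\sqrt\tau$ and the substitution $\sigma\mapsto\tau/\sigma$ — this is the natural symmetrization of the single part-4 estimate used in Theorem~\ref{Thinfbound}'s third integral. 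The concluding step, dividing by the bracketed main term (bounded below thanks to positivity and lower-boundedness of $s^*$ and $\widetilde s^*$) and using that $\varphi(1/t)\cdot o_\varepsilon(1)+\widetilde\varphi(1/t)\cdot o_\varepsilon(1)$ is $o_\varepsilon(1)$ relative to $\varphi(1/t)+\widetilde\varphi(1/t)$, is sound.
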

\noindent The proof of this theorem is similar to the previous one.

\begin{rusremark}
In contrast to the previous theorems, the asymptotics in the last two cases contain two terms. One of them might be majorized by the other, in that case the asymptotics is almost regular again. However, in the general case, it is impossible to predict their behavior, and it is possible, that neither prevails. In that case the asymptotics might not be almost regular.
\end{rusremark}

\begin{rusexample}\label{LogExample}
Let
$$
\mathcal{N}(t, {\mathcal{T}}) \sim  
\dfrac{\ln^{\varkappa_1}(1/t)\cdot s(\ln(1/t))}{t^{1/p}},\qquad
\mathcal{N}(t, \widetilde{\mathcal{T}}) \sim  
\dfrac{\ln^{\varkappa_2}(1/t)\cdot \widetilde{s}(\ln(1/t))}{t^{1/p}}
$$
as  $t\to+0$. Without loss of generality we assume, that $\varphi(\tau) = (1+\ln(\tau))^{\varkappa_1}$, $\widetilde\varphi(\tau) = (1+\ln(\tau))^{\varkappa_2}$. In this case, the asymptotics of the Mellin convolution was calculated in the Example 1 in \cite{NazNikKar}. Let's consider all possible cases.

\paragraph*{Case 1. $\varkappa_1 \geqslant -1, \varkappa_2 \geqslant -1$.}
In this case Theorem \ref{Thinfinf} is applicable when periodic functions have a common period, and Theorem \ref{noncommes1} is applicable otherwise.

If the functions $s$ and $\widetilde s$ have a common period $T$, then
$$
\mathcal{N}_\otimes(t) \sim \dfrac{\phi(1/t)\cdot s_\otimes(\ln(1/t))}{t^{1/p}}, \quad t\to +0,
$$
where the function $s_\otimes$ is defined in \eqref{sotimesdef}, 
$$
    \phi(\tau) =
    \begin{cases}
    \mathbf{B}(\varkappa_1+1, \varkappa_2+1) (1+\ln(\tau))^{\varkappa_1+\varkappa_2+1}, &  \varkappa_1 > -1, \varkappa_2 > -1, \\
    \ln(\ln(\tau))\cdot (1+\ln(\tau))^{\varkappa_2}, &  \varkappa_1 = -1, \varkappa_2 > -1, \\
    2\ln(\ln(\tau))\cdot (1+\ln(\tau))^{-1}, &  \varkappa_1 = \varkappa_2 = -1, \\
    \end{cases}
$$
where $\mathbf{B}$ is the Euler beta function. Note, that the resulting asymptotics is again almost regular.

If the periods $T$ and $\widetilde T$ are incommensurable, then
$$
\mathcal{N}_\otimes(t) \sim \dfrac{\Cspec\phi(1/t)}{t^{1/p}}, \quad t\to +0,
$$
where the constant $\Cspec$ is defined in \eqref{Cdef}, and the resulting asymptotics is regular.

\paragraph*{Case 2. $\varkappa_1 < -1 \leq \varkappa_2$.} In this case, Theorem \ref{Thinfbound} is applicable, and by direct calculation it is easy to see, that
$$
h_{\widetilde\varphi, \varphi}(\tau) = o(\widetilde \varphi(\tau)), \quad \tau\to\infty,
$$
which means, that
\begin{equation}\label{Notimes1}
\mathcal{N}_\otimes(t)\sim \dfrac{\ln^{\varkappa_2}(1/t)
\cdot \widetilde s^*(\ln(1/t))}{t^{1/p}},
\end{equation}
where $\widetilde s^*$ is defined in \eqref{s_ast_tilde}, and the resulting asymptotics is again almost regular.

\paragraph*{Case 3. $\varkappa_1 < \varkappa_2 < -1$.} In this case Theorem \ref{Thboundbound} is applicable, and 
$$
\varphi(\tau) = o(\widetilde \varphi(\tau)), \quad \tau\to\infty,
$$
thus, again, we have the asymptotics \eqref{Notimes1}.

\paragraph*{Case 4. $\varkappa_1 = \varkappa_2 < -1$.} In this case Theorem \ref{Thboundbound} is applicable, and both terms of the asymptotics have the same order of growth, thus
$$
\mathcal{N}_\otimes(t)\sim \dfrac{\ln^{\varkappa_1}(1/t)\big(
 s^*(\ln(1/t)) +
\widetilde s^*(\ln(1/t))\big)}{t^{1/p}},
$$
where $s^*$ is defined in \eqref{s_ast}, $\widetilde s^*$ is defined in \eqref{s_ast_tilde}. In the case, when the functions $s$ and $\widetilde s$ have a common period, this asymptotics turns out to be almost regular, but in the case, when the periods are incommensurable, we have an almost regular asymptotics with a quasi-periodic component.

\end{rusexample}

\section{Small deviations asymptotics}

Let us recall some facts from the theory of small deviations in $L_2$ of Gaussian random functions.

Let there be a Gaussian random function $X(x)$, $x \in \mathcal{O} \subseteq \mathbb{R}^m$, with zero mean and a covariation function $G_X(x, u)$, $x, u \in\mathcal{O}$.
Let $\mu$ be a finite measure on $\mathcal{O}$. Denote
$$
\|X\|_\mu = \Big( \int\limits_{\mathcal{O}}  X^2(x) d\mu(x)\Big)^{1/2}
$$
We call the logarithmic asymptotics of small deviations in $L_2$ the asymptotics of \mbox{$\ln{\bf P}\{\|X\|_\mu \leq \varepsilon\}$} as $\varepsilon\to 0$. 

According to the well-known Karhunen--Lo\`eve expansion we have in distribution
$$
\|X(x)\|^2_\mu \stackrel{d}{=} \sum_{n=1}^{\infty} \lambda_n\xi^2_n,
$$
where $\xi_n$, $n \in \mathbb{N}$, are independent standard normal r.v.’s, and  $\lambda_n > 0$, $n \in \mathbb{N}$, $\sum_n \lambda_n < \infty$ are the eigenvalues of the integral equation
\begin{equation}\label{int_eq}
\lambda f(x) = \int\limits_{\mathcal{O}} G_X(x, u)f(u)d\mu(u).
\end{equation}%
Thus we arrive at the equivalent problem of studying the asymptotic behavior as $\varepsilon\to 0$ of $\ln{\bf P} \{\sum_{n=1}^{\infty} \lambda_n\xi^2_n \leq \varepsilon^2 \}$. According to \cite{NazLog} the answer depends only on the main term of the asymptotics of the sequence $\lda_n$.%

The case of the purely power asymptotics $\lda_n \sim Cn^{-p}$, $p>1$, was considered in \cite{Zol1,Zol2,DudHof,Ibrag}. In \cite{NazNikKar} the case of regular asymptotics is considered, and in \cite{Naz} --- the case of almost power asymptotics with a periodic component.

Consider a more general case. Suppose
\begin{equation}\label{genldaasymp}
\lda_n(\mathcal{T}) = \phi(n) := \dfrac{\psi(n)\cdot \theta(\ln(n))}{n^p},
\end{equation}
where $p>1$, and functions $\theta$ is uniformly continuous on $\mathbb{R}$, bounded, separated from zero, and function $\phi(t)$ is monotonous on $\mathbb{R}$.
 
Function $\phi(n)$ satisfies the conditions of Theorem 2 from \cite{Lifsh1}, which for this case has the following form:

\begin{rusproposition}
\begin{equation}\label{asympP}
\mathbf{P}\bigg\{ \sum\limits_{n=1}^\infty \phi(n)\xi_n^2 \leq r \bigg\} \sim 
\dfrac{\exp (L(u)+ur)}{\sqrt{2\pi u^2 L''(u)}}, \quad r\to 0,
\end{equation}
where
$$
L(u) = \sum_{n=1}^\infty \ln f(u\phi(n)), \quad f(t) := (1+2t)^{-1/2},
$$ 
$u=u(r)$ is an arbitrary function satisfying
$$
\lim_{r\to 0} \dfrac{L'(u)+r}{\sqrt{L''(u)}} = 0.
$$
\end{rusproposition}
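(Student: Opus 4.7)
The plan is to invoke Theorem~2 of \cite{Lifsh1} directly; the work reduces to verifying its hypotheses for the specific sequence $\lda_n = \phi(n) = \psi(n)\theta(\ln n)/n^p$. Lifshits's theorem yields the saddle-point expansion \eqref{asympP} under the assumptions that $\phi(n)$ is positive, strictly decreasing and summable, that $u^2 L''(u)\to+\infty$ as $u\to+\infty$, and that the standard technical bound on the third cumulant (equivalent to $L'''(u)=o\bigl((L''(u))^{3/2}\bigr)$) holds.

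Positivity, monotonicity and summability are immediate: $\psi$ is slowly varying and $\theta$ is bounded and separated from zero, monotonicity of $\phi(t)$ on $\mathbb{R}$ is explicit in \eqref{genldaasymp}, and $p>1$ together with slow variation of $\psi$ yields $\sum_n \phi(n)<\infty$. For the analytic conditions I would analyze the derivatives
\[
L^{(k)}(u) \asymp \sum_{n=1}^\infty\frac{\phi(n)^k}{(1+2u\phi(n))^k},\qquad k=1,2,3,
\]
by replacing the sums with integrals (the error being negligible by monotonicity) and splitting the integration at the saddle scale $x_0=x_0(u)$ defined by $2u\phi(x_0)=1$, which by iteration satisfies $x_0\asymp u^{1/p}\psi^{1/p}(u^{1/p})$. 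On $\{x\ll x_0\}$ the integrand is $\asymp (2u)^{-k}$, while on $\{x\gg x_0\}$ it is $\asymp \phi(x)^k$; both pieces are almost-Mellin integrals to which Karamata's theorem for slowly varying functions applies, and the periodic factor $\theta(\ln x)$ is averaged out exactly as in the proof of Lemma~\ref{MellinConv3}. The outcome is the two-sided estimate
\[
L^{(k)}(u) \asymp u^{1/p-k}\,\psi^{1/p}(u^{1/p})\cdot \Theta_k(\ln u),\qquad u\to\infty,
\]
with $\Theta_k$ bounded, continuous and separated from zero.

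From these one immediately reads off $u^2 L''(u) \asymp u^{1/p}\psi^{1/p}(u^{1/p})\cdot \Theta_2(\ln u) \to+\infty$ and $L'''(u)/(L''(u))^{3/2}\asymp u^{-1/(2p)}\cdot O(1)\to 0$, so Lifshits's hypotheses are verified and formula \eqref{asympP} follows. The main technical obstacle is the uniform two-sided control of the oscillatory factors $\Theta_k(\ln u)$: one must ensure in particular that $\Theta_2$ stays bounded away from zero uniformly in $u$, for otherwise the normalization $\sqrt{2\pi u^2 L''(u)}$ in the denominator of \eqref{asympP} would not be of the correct order. This is secured by the uniform continuity and separation from zero of $\theta$, together with the $C^2$-smoothness of $\psi$ granted by Proposition~\ref{propSVF1}, part~3.
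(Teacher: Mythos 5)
Your proposal takes essentially the same route as the paper: both reduce the statement to Theorem~2 of \cite{Lifsh1}, the paper simply asserting that $\phi(n)$ satisfies its hypotheses while you sketch the verification. The asymptotic control of $L^{(k)}(u)$ you describe (comparison of sums with integrals by monotonicity of $\phi$, splitting at the saddle scale $2u\phi(x_0)=1$, averaging of the periodic factor) mirrors the computations the paper performs later in \S 5 for $L'(u)$, $L''(u)$ and $L(u)$, so your verification is consistent with the paper's treatment.
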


First, we analyse the asymptotics of $L'(u)$ as $u\to +\infty$. In our case 
$$
uL'(u) = -\sum_{n=1}^\infty \dfrac{u\psi(n)\theta(\ln(n))}{n^p+2u\psi(n)\theta(\ln(n))}\to -\infty, \quad u\to +\infty.
$$

Since $\phi(t)$ is a decreasing function, we can estimate
\begin{align*}
\sum_{n=1}^\infty \dfrac{u\psi(n)\theta(\ln(n))}{n^p+2u\psi(n)\theta(\ln(n))} &\geqslant
\int\limits_1^\infty\dfrac{u\psi(t)\theta(\ln(t))\,dt}{t^p+2u\psi(t)\theta(\ln(t))}
\\
&\geqslant \sum_{n=2}^\infty \dfrac{u\psi(n)\theta(\ln(n))}{n^p+2u\psi(n)\theta(\ln(n))}
\sim -uL'(u),
\end{align*}
thus
$$
uL'(u) \sim I_1(u) := -\int\limits_1^\infty\dfrac{u\psi(t)\theta(\ln(t))\,dt}{t^p+2u\psi(t)\theta(\ln(t))}.
$$
Replacing the integral interval with $(0, \infty)$ and substituting
$$
t =t(z) := z\phi^{-1}(1/u) = z\gamma(u),
$$
$$
\gamma(u) := \phi^{-1}(1/u) \sim u^{1/p}\varphi(u)\vartheta(\ln(u)), \quad u\to \infty,
$$
where $\varphi$ is a SVF, and $\vartheta$ is uniformly continuous, bounded, separated from zero function, we obtain
$$
I_1(u)=-\gamma(u)\cdot \int\limits_0^\infty \dfrac{dz}{2+z^p\cdot \dfrac{(\gamma(u))^p}{u\psi(t(z))\theta(\ln(t(z)))}} + O(1), \quad u\to\infty.
$$
From $1/u=\phi(\gamma(u))$ we obtain the relation
$$
(\gamma(u))^p/u = \psi(t(z)/z)\theta(\ln(t(z)/z)).
$$
Substituting it into the integral and considering the definition of $\gamma(u)$, we obtain
$$
I_1(u)=-\gamma(u)\cdot \int\limits_0^\infty \dfrac{dz}{2+z^p\cdot \dfrac{\psi(\gamma(u))\theta(\ln(\gamma(u)))}{\psi(z\gamma(u))\theta(\ln(z\gamma(u)))}} + O(1), \quad u\to\infty.
$$
It is clear, that
$$
\theta(\ln( z\gamma(u) )) =  \theta\Big(\frac{\ln(u)}{p}+\ln(z)\Big)(1+o(1)), \quad u\to\infty.
$$
Note also, that according to Proposition~\ref{propSVF1}, part 2, for every $\varepsilon>0$ the ratio $\psi(t)/t^\varepsilon$ decreases at large values of
$t$, thus, for $z>1$ 
$$
\dfrac{\psi(t)}{\psi(zt)} = \dfrac{1}{z^\varepsilon}\cdot \dfrac{\psi(t)}{t^\varepsilon} \cdot \dfrac{(zt)^\varepsilon}{\psi(zt)} \geqslant \dfrac{C(\varepsilon)}{z^\varepsilon}.
$$
This gives us a majorant to use the Lebesgue theorem. As a result, we have
\begin{equation}\label{I1asymp}
I_1(u)=-u^{1/p}\vartheta(u)\cdot \int\limits_0^\infty \dfrac{dz}{2+z^p\cdot \dfrac{\theta(\ln(u)/p)}{\theta(\ln(u)/p+\ln(z))}} + O(1), \quad u\to\infty.
\end{equation}
Since the integral is a uniformly continuous, bounded and separated from zero function of $\ln(u)$, we obtain
\begin{equation}\label{Theta1asymp}
L'(u)\sim -u^{-\frac{p-1}{p}}\varphi(u)\vartheta_1(\ln(u)), \quad u\to\infty,
\end{equation}
where $\varphi$ is a SVF from the asymptotics of $\gamma$, and $\vartheta_1$ is a uniformly continuous, bounded and separated from zero function.

Similarly we obtain
\begin{equation}\label{Lderder}
u^2 L''(u) \sim 2\int\limits_1^\infty\dfrac{(u\psi(t)\theta(\ln(t)))^2\,dt}{(t^p+2u\psi(t)\theta(\ln(t)))^2} \asymp u^{1/p}\varphi(u),
\end{equation}
$$
L(u)\sim -\dfrac{1}{2}u^{1/p}\varphi(u)\vartheta(\ln(u))\cdot \int\limits_0^\infty 
\ln \left( 1+ \dfrac{2\theta(\ln(u)/p+\ln(z))}{z^p \theta(\ln(u)/p)} \right)\,dt
.
$$

\noindent Since $L''(u)>0$, the equation $L'(u)+r=0$ has for sufficiently small $r$ a unique solution $u(r)$, such that $u(r)\to\infty$ as $r\to 0$. Moreover, the relation \eqref{Theta1asymp} gives us
\begin{equation}\label{asympu}
u(r)\sim r^{-\frac{p}{p-1}} \eta(1/r)\vartheta_2(\ln(1/r)), \quad r\to 0,
\end{equation}
where $\eta$ is a SVF, and $\vartheta_2$ is a uniformly continuous, bounded and separated from zero function.

\noindent Substituting \eqref{Lderder} into \eqref{asympP}, we conclude, that
\begin{align}
\begin{split}\label{Logasymp}
\ln \mathbf{P}&\bigg\{ \sum\limits_{n=1}^\infty \phi(n)\xi_n^2 \leq r \bigg\} \sim
L(u)+ur = L(u) -uL'(u)
\\
&\sim -u^{1/p}\varphi(u)\vartheta(\ln(u))\cdot 
\int\limits_0^\infty \Bigg[ 
\frac{1}{2}\ln \bigg( 1+ \dfrac{2\theta(\ln(u)/p+\ln(z))}{z^p \theta(\ln(u)/p)} \bigg)
\\
&\qquad\qquad\qquad\qquad\qquad -\dfrac{1}{2+z^p\cdot \dfrac{\theta(\ln(u)/p)}{\theta(\ln(u)/p+\ln(z))}}
\Bigg] \, dz
.
\end{split}
\end{align}

\noindent What's left is to note, that the integrand
$$
\dfrac{1}{2}\ln(1+2x)- \dfrac{x}{2x+1} 
$$
is positive, thus the integral is a uniformly continuous function of $\ln(u)$, bounded and separated from zero.
Substituting the asymptotics of $u$ obtained above and replacing $r$ with $\varepsilon^2$, we formulate the following theorem.

\begin{rustheorem}\label{smalldev}
Let the eigenvalues of \eqref{int_eq} have the form \eqref{genldaasymp}. Then, as $\varepsilon\to 0$,
\begin{equation}\label{LogAsymp1}
\ln \mathbf{P}\left\{ \|X\|_\mu \leq \varepsilon \right\} \sim
-\varepsilon^{-\frac{2}{p-1}}\xi(1/\varepsilon)\zeta(\ln(1/\varepsilon)),
\end{equation}
where $\xi$ is a SVF, $\zeta$ is a uniformly continuous, bounded and separated from zero function. Moreover, if the function $\theta$ in \eqref{genldaasymp} is asymptotically $\frac{T}{p}$-periodic, then the function $\zeta$ might be chosen to be  $\frac{T(p-1)}{2p}$-periodic.
\end{rustheorem}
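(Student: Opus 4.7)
The plan is to combine the Karhunen--Loève representation \eqref{int_eq} with the Lifshits asymptotic formula \eqref{asympP} and then to unwind the integral-asymptotic computation prepared above. After Karhunen--Loève, the problem reduces to the asymptotics of $\ln\mathbf{P}\bigl\{\sum_n\phi(n)\xi_n^2\le r\bigr\}$ with $r=\varepsilon^2$; hypothesis \eqref{genldaasymp} places us exactly in the setting of the cited proposition. The main term of the log-probability at the stationary point $u=u(r)$, chosen as the unique (for small $r$) solution of $L'(u)+r=0$, is $L(u)-uL'(u)$; the Gaussian prefactor $(2\pi u^2L''(u))^{-1/2}$ contributes only a logarithmic correction absorbed into the SVF factor $\xi$.

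Next, I would invoke the calculations of $uL'(u)$, $u^2L''(u)$ and $L(u)$ prepared above. Monotonicity of $\phi$ converts $uL'(u)$ into the integral $I_1(u)$, and the substitution $t=z\gamma(u)$ together with the relation $(\gamma(u))^p/u=\psi(t/z)\theta(\ln(t/z))$ gives the representation \eqref{I1asymp}. A dominated-convergence argument, with majorant $C(\varepsilon)z^{-\varepsilon}$ supplied by Proposition~\ref{propSVF1}, part 2, yields \eqref{Theta1asymp}; the same scheme produces \eqref{Lderder} and the corresponding integral formula for $L(u)$. Inverting \eqref{Theta1asymp} gives \eqref{asympu}. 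Substituting \eqref{asympu} into \eqref{Logasymp} and observing that $\tfrac12\ln(1+2x)-\tfrac{x}{2x+1}$ is positive (so that the resulting $z$-integral is a uniformly continuous, bounded and separated from zero function of $\ln u$) yields \eqref{LogAsymp1}.

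For the periodicity statement, I would track the period through each change of variable. If $\theta$ is asymptotically $T/p$-periodic, then since $\ln\gamma(u)\sim\tfrac1p\ln u$, the factor $\vartheta$ inherits period $T$ in $\ln u$; the $z$-integrals appearing in \eqref{I1asymp} and \eqref{Logasymp} depend on $\ln u$ only through $\ln(u)/p$, hence are $T/p$-periodic in $\ln(u)/p$, i.e.\ $T$-periodic in $\ln u$. Since \eqref{asympu} gives $\ln u(r)=\tfrac{p}{p-1}\ln(1/r)+o(\ln(1/r))=\tfrac{2p}{p-1}\ln(1/\varepsilon)+o(\ln(1/\varepsilon))$, a period $T$ in $\ln u$ translates into the period $T(p-1)/(2p)$ in $\ln(1/\varepsilon)$ asserted in the theorem.

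The main obstacle will be the uniform (in $u$) justification of the sum-to-integral comparisons and of the repeated applications of dominated convergence, together with verifying that the slowly varying corrections arising at each substitution do not destroy the separation from zero of the periodic factor $\zeta$. Once those technical points are in place, the period-tracking argument in the last paragraph is merely bookkeeping: it identifies which portions of the computation carry the genuinely periodic information and which are absorbed into the SVF $\xi$.
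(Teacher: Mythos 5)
Your proposal follows essentially the same route as the paper: Karhunen--Lo\`eve plus Lifshits' stationary-point formula, the integral computations of $L'$, $L''$, $L$ via the substitution $t=z\gamma(u)$ with dominated convergence, inversion of \eqref{Theta1asymp} to get \eqref{asympu}, and the same period bookkeeping $\ln u \sim \tfrac{2p}{p-1}\ln(1/\varepsilon)$ yielding period $\tfrac{T(p-1)}{2p}$. The only cosmetic difference is that you describe the prefactor $(2\pi u^2 L''(u))^{-1/2}$ as ``absorbed into $\xi$'' whereas it is simply $o(L(u)+ur)$ and hence negligible in the logarithmic asymptotics; this does not affect the argument.
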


\begin{proof}
The first statement of the theorem follows from \eqref{Logasymp} and \eqref{asympu}, if we replace $r$ with $\varepsilon^2$. Further, if $\theta$ is asymptotically  $\frac{T}{p}$-periodic, then the function~$\vartheta$ is asymptotically $T$-periodic, and by the Lebesgue theorem it is easy to confirm, that the integrals in \eqref{I1asymp} and 
\eqref{Logasymp} are also asymptotically $T$-periodic functions of $\ln(u)$. Thus, the function $\vartheta_1$ in \eqref{Theta1asymp} is asymptotically $T$-periodic, hence  $\vartheta_2$ in \eqref{asympu} is asymptotically $\frac{T(p-1)}{p}$-periodic.
What's left is to note, that it follows from \eqref{asympu}, that 
$$
\ln(u) = \frac{p}{p-1}\ln(1/r) (1+o(1)),\quad r\to 0,
$$ 
thus the integral in \eqref{Logasymp} and the function $\vartheta(\ln(u))$ are asymptotically $\frac{T(p-1)}{p}$-periodic functions of $\ln(1/r)$, and the second statement is proven.
\end{proof}

\noindent Now, let there be two Gaussian processes $X(x)$, $x \in \mathcal{O}_1 \subseteq \mathbb{R}^{m_1}$, and $Y(y)$, $y \in \mathcal{O}_2\subseteq \mathbb{R}^{m_2}$, with zero mean and covariation functions $G_X(x, u)$, $x, u \in\mathcal{O}_1$, and $G_Y(y, v)$, $y, v \in \mathcal{O}_2$, correspondingly. Consider a new Gaussian function $Z(x, y)$, $x \in \mathcal{O}_1$, $y \in \mathcal{O}_2$, with zero mean and the covariation $G_Z((x, y),(u, v)) = G_X(x, u)G_Y(y, v)$. Such a Gaussian function obviously exists, and the integral operator with the kernel $G_Z$ is the tensor product of the operators with the kernels $G_X$ and $G_Y$. Therefore, we use the notation $Z = X \otimes Y$ and we call the process $Z$ the tensor product of processes $X$ and $Y$. The generalization to the multivariate case when obtaining $\bigotimes_{j=1}^d X_j$ is straightforward.

\begin{rusexample}
Let us demonstrate the application of the Theorems from \S 4 for example for the Brownian sheet 
$$
\mathbb{W}_d(x_1, \dots, x_d) = W_1(x_1) \otimes W_2(x_2) \otimes \dots \otimes
W_d(x_d)
$$
in the unit cube with the norm $L_2(\mu)$, where $\mu = \bigotimes\limits_{j=1}^d \mu_j$, and every measure $\mu_j$ is a selfsimilar measure of a generalized Cantor type. Spectral asymptotics of the operators-multipliers in this case are known from \cite{KL} and \cite{SV}:
$$
\mathcal{N}_j(t) \sim \dfrac{s_j(\ln(1/t))}{t^{1/p_j}}, \quad t\to 0+,
$$
where $s_j$ are continuous and $T_j$-periodic, $p_j > 1$. This power asymptotics were considered in the Example~\ref{LogExample} and correspond to the case $\varkappa_1=\varkappa_2=0$.

For certain measures $\mu_j$ functions $s_j$ could be constant, but \cite{VladSheip,Rast} describe wide classes of measures, for which the inconstancy of the periodic component is proven.

Let $\mathfrak{p} := p_1 = \min p_j$. First, we use Theorem \ref{Th1} for each operator with $p_j > \mathfrak{p}$, multiplying it with the first one. As a result, we can assume, without loss of generality, that all of the operators asymptotics have the same power exponent.

If among the rest of the operators at least one has a degenerated periodic component, then the tensor product will also have a degenerated periodic component. If at least two periods are incommensurable, then the periodic component of their tensor product will degenerate into constant according to the Example \ref{LogExample}, and as a result, the periodic component of the whole tensor product will also degenerate into constant.

If all power exponents coincide and all periods are commensurable, then by using the Example \ref{LogExample} we obtain
$$
\mathcal{N}_\otimes(t) \sim \dfrac{C\ln^{\mathfrak{d}-1}(1/t)s^{(\mathfrak{d})}_\otimes(\ln(1/t))}{t^{1/\mathfrak{p}}}, \quad t\to 0+,
$$ 
where $\mathfrak{d}$ is the number of the power exponents equal to $\mathfrak{p}$, $s^{(\mathfrak{d})}_\otimes$ is obtained by iterating the formula \eqref{sotimesdef} required number of times. This allows us to use for this Gaussian field Theorem \ref{smalldev}, Moreover, by direct calculation we discover, that in \eqref{asympu} and \eqref{LogAsymp1}
$$
\eta(1/r)\sim \ln^{\frac{(\mathfrak{d}-1)\mathfrak{p}}{\mathfrak{p}-1}}(1/r), \quad r\to 0,
$$
$$
\xi(1/r)\sim \ln^{\frac{(\mathfrak{d}-1)\mathfrak{p}}{\mathfrak{p}-1}}(1/r), \quad r\to 0.
$$
Thus, as $\varepsilon\to 0$, we have
$$
\ln \mathbf{P}\left\{ \|\mathbb{W}_d\|_\mu \leq \varepsilon \right\} \sim
-\varepsilon^{-\frac{2}{\mathfrak{p}-1}}\ln^{\frac{(\mathfrak{d}-1)\mathfrak{p}}{\mathfrak{p}-1}}(1/\varepsilon)\zeta(\ln(1/\varepsilon)),
$$
where $\zeta$ is a certain $\frac{T(\mathfrak{p}-1)}{2\mathfrak{p}}$-periodic function.

Consider the simplest case, when all measures are classical Cantor measures. For this case we know the values
$$
p = \log_2 6, \quad T = \ln 6.
$$
Substituting this values into the asymptotics, as $\varepsilon\to 0$, we obtain
$$
\ln \mathbf{P}\left\{ \|\mathbb{W}_d\|_\mu \leq \varepsilon \right\} \sim
-\varepsilon^{-2\log_3 2}\ln^{(d-1)\log_3 6}(1/\varepsilon)\zeta(\ln(1/\varepsilon)),
$$
where $\zeta$ is a certain $\frac{\ln 3}{2}$-periodic function.
\end{rusexample}
\begin{rusremark}
Similar results hold, if instead of Wiener process we consider different independent Green Gaussian processes. Some examples of well-known Green Gaussian processes could be found in \cite{Naz}.
\end{rusremark}

\subsection*{Acknowledgements}

Author is grateful to A.~I.~Nazarov for the problem statement and attention, and to D.~D.~Cherkashin for valuable remarks. 

\medskip
The main results of this paper (Theorems 1--7) were obtained with the support of RSF, grant 14-21-00035. Results of \S 5 (Theorem 8) were obtained with the support of RFBR grant (project 16-01-00258a).
\bigskip


\begin{rusbibliography}{99}

\bibitem{GrLuPa} 
Graf S., Luschgy H., Pag\`{e}s G., 
\emph{Functional quantization and small ball probabilities for Gaussian processes}, 
J.  Theoret. Probab. {\bf 16} (2003), no.~4, 1047--1062. 

\bibitem{LuPa} 
Luschgy H., Pag\`{e}s G., 
\emph{Sharp asymptotics of the functional quantization problem for Gaussian processes}, 
Ann. Probab. {\bf 32} (2004),  no.~2, 1574--1599.

\bibitem{PaWa} 
Papageorgiou A.,  Wasilkowski G.~W., 
\emph{On the average complexity of multivariate problems}, 
J. Complexity  {\bf 6} (1990), no.~1,  1--23.

\bibitem{NazNikKar}  
Karol' A.,  Nazarov A.,  Nikitin Ya., 
\emph{Small ball probabilities for Gaussian random fields
and tensor products of compact operators}, Trans. 
Amer. Math. Soc. {\bf 360} (2008), no.~3, 1443--1474.

\bibitem{NazKar} 
Karol' A.,  Nazarov A., 
\emph{Small ball probabilities for smooth Gaussian fields and tensor products of 
compact operators}, Math. Nachr. {\bf 287} (2014), no.~5--6,  595--609.

\bibitem{KL} 
Kigami J.,  Lapidus M.~L., 
\emph{Weyl`s problem for the spectral 
distributions of Laplacians on p.c.f. self-similar fractals}, 
Comm. Math. Phys. {\bf 158} (1991), no.~1, 93--125.

\bibitem{SV} 
Solomyak M., Verbitsky E., 
\emph{On a spectral problem related to self-similar measures}, 
Bull. London Math. Soc.  {\bf 27} (1995), no.~3, 242--248.

\bibitem{Naz}
Nazarov A.~I., 
\emph{Logarithmic $L_2$-small ball asymptotics with respect to self-similar measure
for some Gaussian processes}, 
Zap. Nauchn. Semin. St.-Petersb. Otdel. Mat. Inst.
Steklov (POMI), {\bf 311} (2004), 190--213 (in Russian). 
J. Math. Sci. (N. Y.), 133:3 (2006), 1314--1327.

\bibitem{Sytaya}
Sytaya G.~N., 
\emph{On some asymptotic representations of the Gaussian measure in a Hilbert
space}, 
Theory of Stochastic Processes, Kiev, {\bf 2} (1974), 93--104 (in Russian).  

\bibitem{Lifsh} 
Lifshits M.~A., 
\emph{Asymptotic behavior of small ball probabilities}, 
Prob. Theory and Math. Stat.,  VSP/TEV,  Vilnius, 1999, pp.~ 453--468.

\bibitem{LiShao} 
Li W.~V.,  Shao Q.~M., 
\emph{Gaussian processes: inequalities, small ball probabilities and
applications}, Stochastic Processes: Theory and Methods, Handbook of Statistics,
vol.~19, North--Holland, Amsterdam, 2001, pp.~533--597.

\bibitem{Site} 
Small deviations for stochastic processes and related topics, Internet site,
http://www.proba.jussieu.fr/pageperso/smalldev/

\bibitem{Csaki}  
Cs\`{a}ki E., 
\emph{On small values of the square integral of a multiparameter Wiener process},
Statistics and Probability (Visegraid, 1982), Reidel, Dordrecht, 1984, pp.~19--26.

\bibitem{Li} 
Li W.~V., 
\emph{Comparison results for the lower tail of Gaussian seminorms}, 
J. Theor. Probab. {\bf 5} (1992), no.~1,  1--31. 

\bibitem{Seneta} 
Seneta E., 
\emph{Regularly varying functions}, Lecture Notes in Math.,
vol.~ 508, Springer-Verlag, Berlin, 1976.

\bibitem{VladSheip}
Vladimirov A.~A., Sheipak I.~A.,
\emph{On the Neumann Problem for the Sturm–Liouville Equation with Cantor-Type Self-Similar Weight},
Functional Analysis and Its Applications. {\bf 47} (2013), no.~4, 261--270.

\bibitem{Vlad}
Vladimirov A.~A., 
\emph{Method of oscillation and spectral problem for four-order differential operator with self-similar weight}
Algebra i Analiz, {\bf 27} (2015), no.~2, 83--95 (in Russian). 
St. Petersburg Math. J., {\bf 27} (2016), no.~2, 237--244.

\bibitem{Rast}
Rastegaev N.~V., 
\emph{On spectral asymptotics of the Neumann problem for the Sturm–Liouville equation with self-similar generalized Cantor type weight},
Zap. Nauchn. Semin. St.-Petersb. Otdel. Mat. Inst.
Steklov (POMI), {\bf 425} (2014), 86--98 (in Russian).
J. Math. Sci. (N. Y.), {\bf 210} (2015), no.~6, 814--821.

\bibitem{DamanikGorodetskiSolomyak} 
Damanik D.,  Gorodetski A.,  Solomyak B., 
\emph{Absolutely continuous convolutions of singular measures and an application to the square 
Fibonacci Hamiltonian}, Duke Math. J. {\bf 164} (2015), no.~8, 1603--1640.

\bibitem{Oxtoby} 
Oxtoby J.~C.,
\emph{Ergodic sets}, Bull. Amer. Math. Soc. {\bf 58} (1952), no.~2, 116--136.


\bibitem{NazLog}  
Nazarov A., 
\emph{Log-level comparison principle for small ball probabilities}, 
Statist. Probab. Lett. {\bf 79} (2009), no.~4, 481--486. 

\bibitem{Zol1} 
Zolotarev V.~M., 
\emph{Gaussian measure asymptotic in $l_2$ on a set of centered spheres with radii tending to zero},
Proc. 12th Europ.  Meeting of Statisticians, Varna, 1979, pp.254. 

\bibitem{Zol2}
Zolotarev V.~M., 
\emph{Asymptotic behavior of Gaussian measure in l2, Problems of stability
of stochastic models}, 
Proc. Semin., Moscow, 1984, 54–-58 (in Russian). 
J. Sov. Math., 24 (1986), 2330–-2334.

\bibitem{DudHof} 
Dudley R.~M.,  Hoffmann-J{\o}rgensen J.,  Shepp L.~A.,
\emph{On the lower tail of Gaussian seminorms}, Ann. Prob. {\bf 7} (1979), no.~2, 319--342. 

\bibitem{Ibrag}
Ibragimov I.~A., 
\emph{The probability of a Gaussian vector with values in a Hilbert space
hitting a ball of small radius}, 
Zap. Nauchn. Semin. Leningrad. Otdel. Mat. Inst. Steklov
(LOMI), {\bf 85} (1979), 75–-93 (in Russian). 
J. Sov. Math., {\bf 20} (1982), 2164-–2174.

\bibitem{Lifsh1} 
Lifshits M.~A., 
\emph{On the lower tail probabilities of some random series}, 
Ann. Prob. {\bf 25}  (1997), no.~1, 424--442.

\end{rusbibliography}
\end{document}